\documentclass{article}

\usepackage[a4paper, top=3cm, left=3cm, right=2cm, bottom=2cm]{geometry}
\usepackage[dvips]{graphicx}
\usepackage[active]{srcltx}
\usepackage{amsmath, amsthm, pb-diagram}
\usepackage{amssymb}
\usepackage{amsfonts}
\usepackage{enumerate, fancyhdr, dsfont}





    \newcommand{\C}{\mathbb{C}}






    \newcommand{\lcom}{\textquotedblleft}
    \newcommand{\rcom}{\textquotedblright}


    \newcommand{\modulo}{\mbox{\rm\ mod\ }}


    \newcommand{\cj}[2]{ \left\{ {#1} \ / \ {#2} \right\} }







    \newcommand{\thzfc}{\mathrm{ZFC}}



    
    \newcommand{\Bwf}{\mathcal{B}}

    \newcommand{\Fwf}{\mathcal{F}}

    \newcommand{\Iwf}{\mathcal{I}}
    
    \newcommand{\Mwf}{\mathcal{M}}
    \newcommand{\Nwf}{\mathcal{N}}

    
    \newcommand{\bfrak}{\mathfrak{b}}
    \newcommand{\cfrak}{\mathfrak{c}}
    \newcommand{\dfrak}{\mathfrak{d}}

    
    \newcommand{\vacio}{\varnothing}

    \newcommand{\frestr}{\!\!\upharpoonright\!\!}

    \newcommand{\add}{\mbox{\rm add}}
    \newcommand{\cov}{\mbox{\rm cov}}
    \newcommand{\non}{\mbox{\rm non}}
    \newcommand{\cof}{\mbox{\rm cof}}
    \newcommand{\Sl}{\mbox{\rm Sl}}

    \newcommand{\Aor}{\mathds{A}}
    \newcommand{\Bor}{\mathds{B}}
    \newcommand{\Cor}{\mathds{C}}
    \newcommand{\Dor}{\mathds{D}}
    \newcommand{\Eor}{\mathds{E}}
    \newcommand{\Por}{\mathds{P}}
    \newcommand{\Qor}{\mathds{Q}}
    
    \newcommand{\Sor}{\mathds{S}}
    
    \newcommand{\Qnm}{\dot{\mathds{Q}}}
    \newcommand{\Rnm}{\dot{\mathds{R}}}
    \newcommand{\Snm}{\dot{\mathds{S}}}
    \newcommand{\Tnm}{\dot{\mathds{T}}}
    \newcommand{\Anm}{\dot{\mathds{A}}}
    \newcommand{\Bnm}{\dot{\mathds{B}}}
    \newcommand{\Cnm}{\dot{\mathds{C}}}
    \newcommand{\Dnm}{\dot{\mathds{D}}}
    \newcommand{\Enm}{\dot{\mathds{E}}}

    \newcommand{\cf}{\mbox{\rm cf}}


    \newcommand{\conj}{{\ \mbox{\scriptsize $\wedge$} \ }}


    \newcommand{\imp}{{\ \mbox{$\Rightarrow$} \ }}

    \newcommand{\sii}{{\ \mbox{$\Leftrightarrow$} \ }}









\title{Matrix iterations and Cichon's diagram\thanks{\emph{2000 Mathematics Subject Classification.} Primary 03E17; Secondary 03E35, 03E40.}\thanks{\emph{Keywords.} Cardinal invariants, Cichon's diagram, Matrix iterations}}
\author{Diego Alejandro Mej\'ia\thanks{Supported by the Monbukagakusho (Ministry of Education, Culture, Sports, Science and Technology) Scholarship, Japan.}\thanks{GraduateSchool of System Informatics, Kobe University, Kobe, Japan.}\thanks{damejiag@kurt.scitec.kobe-u.ac.jp}
}

\date{April 19th, 2012}
\renewcommand\footnotemark{}

\begin{document}

\makeatletter
\def\@roman#1{\romannumeral #1}
\makeatother

\theoremstyle{plain}
  \newtheorem{theorem}{Theorem}
  \newtheorem{corollary}{Corollary}
  \newtheorem{lemma}{Lemma}
  \newtheorem{prop}{Proposition}
  \newtheorem{clm}{Claim}
  \newtheorem{exer}{Exercise}
\theoremstyle{definition}
  \newtheorem{definition}{Definition}
  \newtheorem{example}{Example}
  \newtheorem{remark}{Remark}
  \newtheorem{context}{Context}
  \newtheorem*{acknowledgements}{Acknowledgements}

\maketitle

\begin{abstract}
Using matrix iterations of ccc posets, we prove the consistency with $\thzfc$ of some cases where the cardinals on the right hand side of
Cichon's diagram take two or three arbitrary values (two regular values, the third one with uncountable cofinality). Also, mixing this with the
techniques in \cite{brendle}, we can prove that it is consistent with $\thzfc$ to assign, at the same time, several arbitrary regular values on
the left hand side of Cichon's diagram.
\end{abstract}

\section{Introduction}\label{intro}

In this work we are interested in obtaining models where \emph{the continuum is large} (we mean with this that the size of the continuum is
$\geq\aleph_3$) and where cardinal invariants in Cichon's diagram can take arbitrary regular values. So far, from \cite{brendle}
models are known where those cardinals take as values two previously fixed arbitrary regular cardinals.\\
Concerning the possibility of models where the invariants in Cichon's diagram assume three or more different values, the iteration techniques in
\cite{brendle} bring models where cardinals on the left hand side of Cichon's diagram take several arbitrary values. Nevertheless, models where
invariants of the right hand side of Cichon's diagram can assume more than two arbitrary values seem more difficult to get, furthermore, more
sophisticated techniques than the usual finite support iteration of ccc posets seem to be needed to construct such
models.\\
We use the technique of matrix iterations of ccc posets (see \cite{blsh} and \cite{BF}) to construct models of $\thzfc$ of some cases where
cardinals on the right hand side of Cichon's diagram can assume two or three arbitrary values, the greatest of them with uncountable cofinality
and the others regular. Even more, we use some of the reasonings in \cite{brendle} with this technique in order to, at the
same time, assign several arbitrary regular values to the invariants of the left hand side of the diagram.\\
Throughout this text, we refer to a member of $\omega^\omega$ (the set of functions from $\omega$ to $\omega$) or to a member of the cantor
space $2^\omega$ (the set of functions from $\omega$ to $2=\{0,1\}$) as a \emph{real}. $\Mwf$ denotes the $\sigma$-ideal of meager sets of reals
and $\Nwf$ is the $\sigma$-ideal of null sets of reals (from the context, it is possible to guess whether the reals correspond to
$\omega^\omega$ or to $2^\omega$). For $\Iwf$ being $\Mwf$ or $\Nwf$, the following cardinal invariants are defined
\begin{description}
   \item[$\add(\Iwf)$] the least size of a family $\Fwf\subseteq\Iwf$ whose union is not in $\Iwf$,
   \item[$\cov(\Iwf)$] the least size of a family $\Fwf\subseteq\Iwf$ whose union covers all the reals,
   \item[$\non(\Iwf)$] the least size of a set of reals not in $\Iwf$, and
   \item[$\cof(\Iwf)$] the least size of a cofinal subfamily of $\langle\Iwf,\subseteq\rangle$.
\end{description}
The value of each of these invariants doesn't depend on the space of reals used to define it.\\
We consider $\cfrak=2^{\aleph_0}$ (the size of the continuum) and the invariants $\bfrak$ and $\dfrak$ as given in Subsection \ref{SubsecUnbd}.
Thus, we have Cichon's diagram as in figure \ref{fig:1}.
\begin{figure}
\begin{center}
  \includegraphics[scale=0.52]{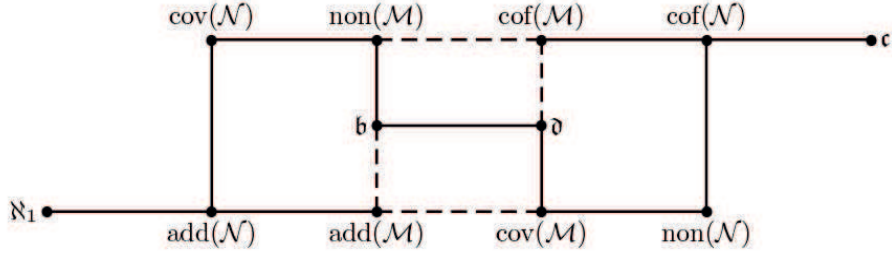}
\caption{Cichon's diagram}
\label{fig:1}
\end{center}
\end{figure}
In figure \ref{fig:1}, horizontal lines from left to right and vertical lines from down to up represent $\leq$. The dotted lines represent
$\add(\Mwf)=\min\{\bfrak,\cov(\Mwf)\}$ and $\cof(\Mwf)=\max\{\dfrak,\non(\Mwf)\}$. For basic definitions, notation and proofs regarding Cichon's
diagram, see \cite[Chapter 2]{barju} and \cite{bart}.\\
Our notation is quite standard. $\Aor$ represents the amoeba algebra, $\Bor$ the random algebra, $\Cor$ the Cohen poset, $\Dor$ is Hechler
forcing, $\Eor$ is the eventually different reals forcing and $\mathds{1}$ denotes the trivial poset $\{0\}$. Those posets are Suslin ccc
forcing notions. See \cite[Chapter 3, Section 7.4B]{barju} for definitions and properties. We abbreviate the expression \lcom finite support
iteration\rcom\ by fsi. Basic notation and knowledge about forcing can be found in \cite{kunen} and \cite{jech}.\\
This paper is structured as follows. In section \ref{SecPresUnbd}, we present preservation results in a very general setting as they are given
in \cite[Section 6.4]{barju} with some small variations of the definitions and results. At the end, some particular cases of those properties
are mentioned, previously presented in \cite{brendle}, \cite{jushe} and in \cite[Section 6.5]{barju}. The contents of this section are
fundamental results to preserve lower and upper bounds of some cardinal invariants under forcing extensions and they are used to calculate
the values of the invariants involved in the models constructed in sections \ref{SecModelLeft} and \ref{SecModelRight}.\\
Section \ref{SecModelLeft} contains extensions of some models presented in \cite{brendle} where one cardinal invariant of the right hand side of
Cichon's diagram is preserved to be large and where some invariants of the left hand side can take arbitrary regular uncountable values. The
same technique as the one presented in \cite{brendle} works to obtain such models. Some of them are used to start the constructions of the
models in section
\ref{SecModelRight}.\\
In the same general context presented in section \ref{SecPresUnbd}, preservation results about unbounded reals are contained in section
\ref{SecPresUnbdg}. Those results have been already presented in \cite{blsh} and \cite{BF} with a particular notation, but we add
Theorem \ref{suslinStar} concerning these preservation results with the properties stated in section \ref{SecPresUnbd}.\\
In section \ref{SecMatrixit}, we define the specific case of matrix iterations of ccc posets and present Corollary \ref{MainMatrixConsq} that
allows us to calculate, in a generic extension, the size of one invariant of the right hand side of Cichon's diagram. This specific case
consists of using subalgebras of Suslin ccc forcing notions as it is done in \cite{BF}, but here we consider also the case when a column of the
matrix is extended using fully some Suslin ccc forcing notion and some relations with the properties presented in section \ref{SecPresUnbd}
(Theorem \ref{PresvUnbdg}). This allows us, in section \ref{SecModelRight}, to obtain models where cardinal invariants of the right hand side of
Cichon's diagram can take two or three arbitrary values, which are the main results of this paper. There, we also use the techniques for the
models in section \ref{SecModelLeft} to assign
arbitrary regular values to some invariants of the left hand side of Cichon's diagram.\\
At the end, in section \ref{SecQ} are included some questions regarding the material of this paper.

\section{Preservation of $\sqsubset$-unbounded families}\label{SecPresUnbd}

This section contains some of the notation and results in \cite{gold}, \cite{brendle} and in \cite[Sections 6.4 and 6.5]{barju}. Throughout this
section, we fix $\kappa$ an uncountable regular cardinal and $\lambda\geq\kappa$ infinite cardinal.
\begin{context}[\cite{gold}, {\cite[Section 6.4]{barju}}]\label{ContextUnbd}
 We fix an increasing sequence $\langle\sqsubset_n\rangle_{n<\omega}$ of 2-place relations in $\omega^\omega$ such that
\begin{itemize}
   \item each $\sqsubset_n$ ($n<\omega)$ is a closed relation (in the arithmetical sense) and
   \item for all $n<\omega$ and $g\in\omega^\omega$, $(\sqsubset_n)^g=\cj{f\in\omega^\omega}{f\sqsubset_n g}$ is (closed) n.w.d.
\end{itemize}
Put $\sqsubset=\bigcup_{n<\omega}\sqsubset_n$. Therefore, for every $g\in\omega^\omega$, $(\sqsubset)^g$ is an $F_\sigma$ meager set.\\
$F\subseteq\omega^\omega$ is a \emph{$\sqsubset$-unbounded family} if, for every $g\in\omega^\omega$, there exists an $f\in F$ such that
$f\not\sqsubset g$. We define the cardinal $\bfrak_\sqsubset$ as the least size of a $\sqsubset$-unbounded family. Besides,
$D\subseteq\omega^\omega$ is a \emph{$\sqsubset$-dominating family} if, for every $x\in\omega^\omega$, there exists an $f\in D$ such that
$x\sqsubset f$. Likewise, we define the cardinal $\dfrak_\sqsubset$ as the least size of a $\sqsubset$-dominating family.\\
Given a set $Y$, we say that a real $f\in\omega^\omega$ is \emph{$\sqsubset$-unbounded over $Y$} if $f\not\sqsubset g$ for every $g\in
Y\cap\omega^\omega$.
\end{context}
Although we define Context \ref{ContextUnbd} for $\omega^\omega$, in general we can use the same notion by changing the space for the domain or
the range of $\sqsubset$ to another uncountable Polish space, like $2^\omega$ or other spaces whose members can be coded by reals in
$\omega^\omega$. This will be the case for the particular cases in the subsections \ref{SubsecCovNull} and \ref{SubsecAddNull}.
\begin{definition}\label{DefTriangle}
  For a set $F\subseteq\omega^\omega$, the property
  $(\blacktriangle,\sqsubset,F,\kappa)$ holds if, for all $X\subseteq\omega^\omega$ such that $|X|<\kappa$,
             there exists an $f\in F$ which is $\sqsubset$-unbounded over $X$.
\end{definition}
This property implies directly that $F$ is a $\sqsubset$-unbounded family and that no set of size $<\kappa$ is $\sqsubset$-dominating, that is,
\begin{lemma}\label{TriangleImpl}
   $(\blacktriangle,\sqsubset,F,\kappa)$ implies that $\bfrak_\sqsubset\leq|F|$ and $\kappa\leq\dfrak_{\sqsubset}$.
\end{lemma}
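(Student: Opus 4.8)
The plan is to unwind the definitions and apply the hypothesis $(\blacktriangle,\sqsubset,F,\kappa)$ to two cleverly chosen small sets $X$. Both halves of the lemma follow immediately once we recall that $\kappa$ is uncountable (in particular $\kappa>1$), so that singletons and, more generally, any set of size $<\kappa$ are admissible choices of $X$ in Definition \ref{DefTriangle}.

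First I would establish $\bfrak_\sqsubset\leq|F|$ by showing that $F$ itself is a $\sqsubset$-unbounded family. Given an arbitrary $g\in\omega^\omega$, apply $(\blacktriangle,\sqsubset,F,\kappa)$ with $X=\{g\}$, which has size $1<\kappa$; this yields an $f\in F$ that is $\sqsubset$-unbounded over $\{g\}$, i.e. $f\not\sqsubset g$. Since $g$ was arbitrary, $F$ witnesses $\bfrak_\sqsubset\leq|F|$.

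Next I would handle $\kappa\leq\dfrak_\sqsubset$ by showing that no family of size $<\kappa$ can be $\sqsubset$-dominating. Let $D\subseteq\omega^\omega$ with $|D|<\kappa$; apply $(\blacktriangle,\sqsubset,F,\kappa)$ with $X=D$ to obtain $f\in F\subseteq\omega^\omega$ which is $\sqsubset$-unbounded over $D$, i.e. $f\not\sqsubset g$ for every $g\in D$. By definition of a $\sqsubset$-dominating family this means $D$ fails to dominate the real $f$, hence $D$ is not $\sqsubset$-dominating. Therefore every $\sqsubset$-dominating family has size at least $\kappa$, which is exactly $\kappa\leq\dfrak_\sqsubset$.

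There is no real obstacle here; the proof is a direct translation of the property into the language of the cardinal invariants $\bfrak_\sqsubset$ and $\dfrak_\sqsubset$. The only point worth stating explicitly is the use of $\kappa$ being uncountable (so $1<\kappa$) in the first part, and the trivial observation in the second part that the $f$ produced lies in $\omega^\omega$, so that asking whether $D$ dominates it is meaningful.
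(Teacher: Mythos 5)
Your proof is correct and is exactly the direct unwinding of Definition \ref{DefTriangle} that the paper has in mind (it states the lemma as an immediate consequence without further argument): applying the property to the singleton $\{g\}$ shows $F$ is $\sqsubset$-unbounded, and applying it to any $D$ of size $<\kappa$ shows no such $D$ is $\sqsubset$-dominating. Nothing is missing.
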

\begin{definition}[{\cite[Def. 6.4.4.3]{barju}}]\label{DefPlusProp}
  For a forcing notion $\Por$, the property
  $(+^\kappa_{\Por,\sqsubset})$ holds if, for every $\Por$-name $\dot{h}$ of a real in $\omega^\omega$, there exists a set
        $Y\subseteq\omega^\omega$ such that $|Y|<\kappa$ and, for every $f\in\omega^\omega$, if $f$ is $\sqsubset$-unbounded over $Y$,
        then $\Vdash f\not\sqsubset\dot{h}$.
\end{definition}
When $\kappa=\aleph_1$, we just write $(+_{\Por,\sqsubset})$. It is clear that
$(+^\kappa_{\Por,\sqsubset})$ implies $(+^\lambda_{\Por,\sqsubset})$.\\
The property $(+^\kappa_{\cdot,\sqsubset})$ corresponds, for some particular cases of $\sqsubset$, to the notions of $\kappa$-good,
$\kappa$-nice and $\kappa$-full discussed in \cite{brendle}.
\begin{definition}[Judah and Shelah, \cite{jushe}, \cite{brendle}, {\cite[Def 6.4.4.4]{barju}}]\label{DefGood}
   A forcing notion $\Por$ is \emph{$\kappa$-$\sqsubset$-good} if, for an arbitrary large $\chi$, whenever $M\prec H_\chi$ is such that $\Por\in
   M$ and $|M|<\kappa$, then there is an $N$ with $M\preceq N\prec H_\chi$, $|N|<\kappa$, such that
   \begin{enumerate}[(i)]
      \item $\Vdash\textrm{\lcom}\dot{G}\textrm{ is }\Por\textrm{-generic over }N$\rcom, and
      \item for all $f\in\omega^\omega$ $\sqsubset$-unbounded over $N$, $\Vdash\textrm{\lcom}f\textrm{ is }\sqsubset\textrm{-unbounded over
      }N[\dot{G}]$\rcom.
   \end{enumerate}
\end{definition}
\begin{lemma}[{\cite[Lemma 6.4.5]{barju}}]\label{EqvNice}
   If $\Por$ is $\kappa$-$\sqsubset$-good, then $(+^\kappa_{\Por,\sqsubset})$ holds. The converse is true when $\Por$ is $\kappa$-cc.
\end{lemma}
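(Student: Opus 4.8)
The plan is to prove the two implications separately, working throughout with elementary submodels of a sufficiently large $H_\chi$. For ``$\kappa$-$\sqsubset$-good $\Rightarrow (+^\kappa_{\Por,\sqsubset})$'', let $\dot h$ be a $\Por$-name for a real in $\omega^\omega$, choose $\chi$ large enough that the defining property of $\kappa$-$\sqsubset$-goodness applies and $\Por,\dot h\in H_\chi$, and pick $M\prec H_\chi$ with $\Por,\dot h\in M$ and $|M|<\kappa$ (possible since $\kappa$ is uncountable). Applying $\kappa$-$\sqsubset$-goodness to $M$ yields $N$ with $M\preceq N\prec H_\chi$, $|N|<\kappa$, satisfying (i) and (ii) of Definition \ref{DefGood}. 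I claim $Y:=N\cap\omega^\omega$ witnesses $(+^\kappa_{\Por,\sqsubset})$ for $\dot h$: clearly $|Y|<\kappa$, and if $f\in\omega^\omega$ is $\sqsubset$-unbounded over $Y$ then (ii) gives $\Vdash$``$f$ is $\sqsubset$-unbounded over $N[\dot G]$'', while (i) makes $N[\dot G]$ a genuine model and, since $\dot h\in M\subseteq N$, forces $\dot h\in N[\dot G]$; hence $\Vdash f\not\sqsubset\dot h$, as required.

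For the converse, assume $\Por$ is $\kappa$-cc and $(+^\kappa_{\Por,\sqsubset})$ holds, fix $\chi$ large and $M\prec H_\chi$ with $\Por\in M$ and $|M|<\kappa$; we must build the required $N$. I would construct an increasing chain $\langle N_n\rangle_{n<\omega}$ of elementary submodels of $H_\chi$, each of size $<\kappa$, with $N_0=M$ and $\kappa\in N_1$, arranging that $N_{n+1}$ contains, as a subset, the ordinal $\sup(N_n\cap\kappa)$, and contains every element of some set $Y_{\dot h}$ as in Definition \ref{DefPlusProp} (which exists by $(+^\kappa_{\Por,\sqsubset})$) for each $\Por$-name $\dot h\in N_n$ for a real. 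At stage $n+1$ only $<\kappa$ new points must be absorbed — there are $<\kappa$ such names, each $Y_{\dot h}$ has size $<\kappa$, $\sup(N_n\cap\kappa)<\kappa$, and $\kappa$ is regular — so such $N_{n+1}$ exists by Löwenheim--Skolem. Put $N:=\bigcup_{n<\omega}N_n$; then $M\preceq N\prec H_\chi$, $|N|<\kappa$, and $N\cap\kappa$ is an ordinal $\delta<\kappa$.

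It remains to verify (i) and (ii) for this $N$. Condition (ii) is immediate: any $\Por$-name $\dot h\in N$ for a real lies in some $N_n$, so $Y_{\dot h}\subseteq N_{n+1}\subseteq N$; hence any $f$ that is $\sqsubset$-unbounded over $N$ is $\sqsubset$-unbounded over $Y_{\dot h}$, so $\Vdash f\not\sqsubset\dot h$, and since every real of $N[\dot G]$ equals $\dot h[\dot G]$ for such a name $\dot h\in N$, $\Vdash$``$f$ is $\sqsubset$-unbounded over $N[\dot G]$''. Condition (i) uses the $\kappa$-cc together with the closure $N\cap\kappa=\delta$: if $A\in N$ is a maximal antichain of $\Por$, then $N\models|A|<\kappa$ by elementarity, so (as $\kappa\in N$) the ordinal $|A|^{N}$ belongs to $N\cap\kappa=\delta$, whence $|A|^{N}\subseteq N$; composing with a bijection $|A|^{N}\to A$ lying in $N$ gives $A\subseteq N$, so $A$ remains maximal — in particular predense below every condition — and therefore $\Vdash$``$\dot G$ is $\Por$-generic over $N$''. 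I expect the only genuinely delicate point to be this last one: one must take care that $N$ absorbs enough ordinals below $\kappa$ so that the $\kappa$-cc pulls every relevant maximal antichain entirely into $N$; the rest is a routine closing-off argument, and $|N|<\kappa$ is preserved throughout because $\kappa$ is regular and uncountable.
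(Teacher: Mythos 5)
Your proof is correct and follows essentially the same route as the paper: the forward direction by applying goodness to an elementary submodel containing $\Por$ and $\dot h$ and taking $Y=N\cap\omega^\omega$, and the converse by the same $\omega$-chain of submodels of size $<\kappa$ closed under the witnesses $Y_{\dot h}$ and under $\sup(\cdot\cap\kappa)$, with the $\kappa$-cc guaranteeing genericity over the union. The only difference is that you spell out the final verification of (i) and (ii) (that $N\cap\kappa$ is an ordinal, so maximal antichains in $N$ are contained in $N$), which the paper leaves implicit.
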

\begin{proof}
   Assume that $\Por$ is $\kappa$-$\sqsubset$-good and let $\dot{h}$ be a $\Por$-name for a real in $\omega^\omega$.
   Choose $M\prec H_\chi$, $|M|<\aleph_1$ such that $\Por,\dot{h}\in M$. Then,
   there exists an $N$ witnessing $\kappa$-$\sqsubset$-goodness for $M$, so $Y:=\omega^\omega\cap N$ witnesses $(+^\kappa_{\Por,\sqsubset})$ for
   $\dot{h}$.\\
   For the converse, let $M\prec H_\chi$ such that $|M|<\kappa$ and $\Por\in M$. By recursion, construct a sequence
   $\langle M_n\rangle_{n<\omega}$ such that, for every $n<\omega$,
   \begin{itemize}
      \item $M_0=M$,
      \item $M_n\preceq M_{n+1}\prec H_\chi$, $|M_n|<\kappa$,
      \item $\sup(M_n\cap\kappa)\subseteq M_{n+1}$ and
      \item for every $\dot{h}\in M_n$ $\Por$-name for a real in $\omega^\omega$ there exists a $Y_{\dot{h}}$ witness
            of $(+^\kappa_{\Por,\sqsubset})$ for $\dot{h}$ such that $Y_{\dot{h}}\subseteq M_{n+1}$.
   \end{itemize}
   Put $N=\bigcup_{n<\omega}M_n$. As $N\cap\kappa\subseteq N$ and $\Por$ is $\kappa$-cc, it follows that $N$ is as desired for
   $\kappa$-$\sqsubset$-goodness.
\end{proof}
The property $(+^\kappa_{\cdot,\sqsubset})$ is important for a forcing notion to preserve $\bfrak_\sqsubset$ small and $\dfrak_\sqsubset$ large.
In general,
\begin{lemma}[{\cite[Lemma 6.4.8]{barju}}]\label{PreserTriangle}
   Assume $(+^\kappa_{\Por,\sqsubset})$. Then, the statements $(\blacktriangle,\sqsubset,F,\kappa)$ and
   \lcom$\dfrak_\sqsubset\geq\lambda$\rcom\ are preserved in generic extensions of $\Por$.
\end{lemma}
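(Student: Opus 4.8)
The plan is to fix a $\Por$-generic filter $G$ over the ground model $V$ and show that each of the two statements, if true in $V$, remains true in $V[G]$; both arguments run on the same device, which I set up first. Suppose $A\in V[G]$ is a set of reals with $|A|<\theta$, where $\theta$ will be taken to be $\kappa$ in the first case and $\lambda$ in the second. Working in $V[G]$, fix an ordinal $\mu<\theta$ and a surjection $e\colon\mu\to A$; since ordinals are absolute, $\mu\in V$, and from a $\Por$-name $\dot{e}$ for $e$ we obtain, in $V$, a sequence $\langle\dot{a}_\alpha\rangle_{\alpha<\mu}$ of $\Por$-names for reals with $\dot{a}_\alpha[G]=e(\alpha)$ for all $\alpha<\mu$, so that $A=\{\dot{a}_\alpha[G] : \alpha<\mu\}$. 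For each $\alpha<\mu$ apply $(+^\kappa_{\Por,\sqsubset})$ to $\dot{a}_\alpha$ to get a set $Y_\alpha\subseteq\omega^\omega$ in $V$ with $|Y_\alpha|<\kappa$ such that every $f\in(\omega^\omega)^V$ that is $\sqsubset$-unbounded over $Y_\alpha$ satisfies $\Vdash f\not\sqsubset\dot{a}_\alpha$. (It is convenient to keep using $(+^\kappa_{\Por,\sqsubset})$, not merely $(+^\lambda_{\Por,\sqsubset})$, so that the $Y_\alpha$ obey the uniform bound $|Y_\alpha|<\kappa$.) Put $Y:=\bigcup_{\alpha<\mu}Y_\alpha\in V$. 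A routine cardinal computation gives $|Y|<\theta$: when $\theta=\kappa$ this is regularity of $\kappa$ together with $\mu<\kappa$; when $\theta=\lambda$ it is regularity of $\lambda$ if $\lambda$ is regular, and otherwise $|Y|\le|\mu|\cdot\kappa=\max(|\mu|,\kappa)<\lambda$, using $\mu<\lambda$ and $\kappa<\lambda$. The \emph{payoff} of the device: if $f\in(\omega^\omega)^V$ is $\sqsubset$-unbounded over $Y$, then it is $\sqsubset$-unbounded over each $Y_\alpha$, hence $\Vdash f\not\sqsubset\dot{a}_\alpha$ for every $\alpha<\mu$, and therefore in $V[G]$ we have $f\not\sqsubset a$ for all $a\in A$; that is, $f$ is $\sqsubset$-unbounded over $A$ in $V[G]$.

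\emph{Preservation of $(\blacktriangle,\sqsubset,F,\kappa)$.} Work in $V[G]$ and let $X\subseteq\omega^\omega$ with $|X|<\kappa$; we must find $f\in F$ that is $\sqsubset$-unbounded over $X$. Apply the device with $\theta=\kappa$ and $A=X$ to obtain $Y\in V$ with $|Y|<\kappa$. Since $(\blacktriangle,\sqsubset,F,\kappa)$ holds in $V$, there is $f\in F$ (so $f\in(\omega^\omega)^V$, and $F$ is literally the same set of reals in $V[G]$) which is $\sqsubset$-unbounded over $Y$. By the payoff, $f$ is $\sqsubset$-unbounded over $X$ in $V[G]$, as required.

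\emph{Preservation of $\dfrak_\sqsubset\ge\lambda$.} Work in $V[G]$ and suppose, towards a contradiction, that $D\subseteq\omega^\omega$ is $\sqsubset$-dominating with $|D|<\lambda$. Apply the device with $\theta=\lambda$ and $A=D$ to obtain $Y\in V$ with $|Y|<\lambda$. Since $\dfrak_\sqsubset\ge\lambda$ holds in $V$, this $Y$ is not $\sqsubset$-dominating in $V$, so there is $x\in(\omega^\omega)^V$ which is $\sqsubset$-unbounded over $Y$. By the payoff, in $V[G]$ we have $x\not\sqsubset d$ for every $d\in D$, contradicting that $D$ is $\sqsubset$-dominating. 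Hence $\dfrak_\sqsubset\ge\lambda$ in $V[G]$.

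\emph{Where the work is.} There is no deep obstacle; the two points requiring care are both in the device, namely representing $A$ by names lying in $V$ (which is free, since $\mu$ is an ordinal and the whole enumeration has a single name in $V$) and checking that $Y$ stays below $\theta$ (the cardinal arithmetic above, where regularity of $\kappa$, and $\kappa\le\lambda$, are exactly what is used). If one worries about $\Por$ possibly moving $\kappa$ or $\lambda$, observe that the argument only needs $|X|^{V[G]}$ (resp.\ $|D|^{V[G]}$) to be an ordinal below $\kappa$ (resp.\ $\lambda$), which holds unconditionally; in all applications in this paper $\Por$ is ccc anyway.
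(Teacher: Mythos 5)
Your proof is correct and follows essentially the same route as the paper's: apply $(+^\kappa_{\Por,\sqsubset})$ to names for the members of the small family, take the union $Y$ of the witnesses, and use the ground-model statement to find a real $\sqsubset$-unbounded over $Y$, hence forced unbounded over the family. You merely phrase it semantically via a fixed generic and spell out the cardinal arithmetic (including singular $\lambda$) that the paper leaves as ``a similar argument.''
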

\begin{proof}
   Let $\mu<\kappa$ and $\langle\dot{x}_\alpha\rangle_{\alpha<\mu}$ a sequence of $\Por$-names of reals in $\omega^\omega$. For each
   $\alpha<\mu$, there exists a $Y_\alpha\subseteq\omega^\omega$ witnessing $(+^\kappa_{\Por,\sqsubset})$ for $\dot{x}_\alpha$. Put
   $Y=\bigcup_{\alpha<\mu}Y_\alpha$. As $|Y|<\kappa$, $(\blacktriangle,\sqsubset,F,\kappa)$ implies that there is an $f\in F$ which is
   $\sqsubset$-unbounded over $Y$, so it follows that $\Vdash f\not\sqsubset\dot{x}_\alpha$ for each $\alpha<\mu$. This proves
   $\Vdash(\blacktriangle,\sqsubset,F,\kappa)$.\\
   A similar argument proves that, whenever $\dfrak_\sqsubset\geq\lambda$, $\Por$ forces that no family of size $<\lambda$ is $\sqsubset$-dominating.
\end{proof}
Lemma 2 and \cite[Thm. 6.4.12.2]{barju} gives the following result about fsi of $\kappa$-cc forcing notions.
\begin{theorem}[Judah and Shelah, \cite{jushe}, {\cite[Thm. 6.4.12.2]{barju}}, \cite{brendle}]\label{preservPlus}
   Let $\kappa$ be an uncountable cardinal, $\Por_\delta=\langle\Por_\alpha,\Qnm_\alpha\rangle_{\alpha<\delta}$ a fsi of $\kappa$-cc forcing.
   If $\forall_{\alpha<\delta}\big(\Vdash_{\Por_\alpha}(+^\kappa_{\Qnm_\alpha,\sqsubset})\big)$, then $(+^\kappa_{\Por_\delta,\sqsubset})$.
\end{theorem}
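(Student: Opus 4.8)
The plan is to avoid arguing directly with names of reals in the whole iteration and instead route everything through the notion of $\kappa$-$\sqsubset$-goodness, for which a finite support iteration theorem is already available in \cite[Thm. 6.4.12.2]{barju}. Concretely, I would first translate the hypothesis $(+^\kappa_{\Qnm_\alpha,\sqsubset})$ on the iterands into goodness, then iterate goodness, then translate back.

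First I would note that, since $\Por_\delta$ is a fsi of $\kappa$-cc forcings and $\kappa$ is regular, each $\Por_\alpha$ and $\Por_\delta$ itself is $\kappa$-cc, so the expression $(+^\kappa_{\Por_\delta,\sqsubset})$ is being applied in the intended setting and the converse part of Lemma \ref{EqvNice} is available at every stage. By hypothesis, for each $\alpha<\delta$ we have $\Vdash_{\Por_\alpha}$ \lcom $\Qnm_\alpha$ is $\kappa$-cc and $(+^\kappa_{\Qnm_\alpha,\sqsubset})$\rcom, so, applying the converse implication of Lemma \ref{EqvNice} inside $V^{\Por_\alpha}$, we get $\Vdash_{\Por_\alpha}$ \lcom $\Qnm_\alpha$ is $\kappa$-$\sqsubset$-good\rcom. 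Now I would invoke \cite[Thm. 6.4.12.2]{barju}, which says precisely that a finite support iteration of $\kappa$-cc forcing notions whose iterands are (forced to be) $\kappa$-$\sqsubset$-good is itself $\kappa$-$\sqsubset$-good; this yields that $\Por_\delta$ is $\kappa$-$\sqsubset$-good. Finally, the forward implication of Lemma \ref{EqvNice} gives $(+^\kappa_{\Por_\delta,\sqsubset})$, which is what we want.

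The only genuinely substantial ingredient here is the cited iteration theorem, and that is where I expect the difficulty to lie. Were it not already in the literature, I would prove it by induction on $\delta$: at successor steps one composes a witnessing pair $M\preceq N$ for $\Por_\alpha$ with a witnessing pair for $\Qnm_\alpha$ over $V^{\Por_\alpha}$, using that $\kappa$-$\sqsubset$-goodness is preserved by two-step iteration; the delicate case is $\delta$ a limit, where one builds a continuous increasing chain $\langle N_\xi\rangle$ of elementary submodels of size $<\kappa$ with $N:=\bigcup_\xi N_\xi$ satisfying $N\cap\kappa\subseteq N$, and then uses the $\kappa$-cc of $\Por_\delta$ together with the inductive hypothesis at each coordinate to argue that any $f$ that is $\sqsubset$-unbounded over $N$ remains $\sqsubset$-unbounded over $N[\dot G]$. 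Everything outside this limit-stage fusion argument is bookkeeping with the equivalences of Lemma \ref{EqvNice} already established above.
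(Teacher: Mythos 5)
Your proposal is correct and follows essentially the same route as the paper, which obtains Theorem \ref{preservPlus} precisely by combining the equivalence of Lemma \ref{EqvNice} (valid since $\kappa$ is regular and the fsi of $\kappa$-cc iterands is $\kappa$-cc) with the goodness-preservation theorem \cite[Thm. 6.4.12.2]{barju}. Your sketch of how one would prove the cited iteration theorem is extra, but the translation through $\kappa$-$\sqsubset$-goodness and back is exactly the paper's argument.
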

\begin{lemma}[{\cite[Thm. 6.4.7]{barju}}]\label{smallPlus}
   If $\Por$ is a poset and $|\Por|<\kappa$, then $(+^\kappa_{\Por,\sqsubset})$. In particular, $(+_{\C,\sqsubset})$ always holds.
\end{lemma}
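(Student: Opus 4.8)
The plan is to read off from the name $\dot h$ a small set $Y$ of ``candidate values'' for $\dot h$ and then to exploit the closedness of the relations $\sqsubset_n$ to see that $Y$ works. Fix a $\Por$-name $\dot h$ for a real in $\omega^\omega$. For each condition $p\in\Por$ I build a real $h_p\in\omega^\omega$ by the obvious recursion: put $r_0=p$, and given $r_m\leq p$ with $r_m\Vdash\dot h\frestr m=h_p\frestr m$, pick $r_{m+1}\leq r_m$ and $k\in\omega$ with $r_{m+1}\Vdash\dot h(m)=k$ and set $h_p(m)=k$; then $r_{m+1}\Vdash\dot h\frestr(m+1)=h_p\frestr(m+1)$, so the recursion proceeds (note we never need a lower bound for the sequence $\langle r_m\rangle_{m<\omega}$). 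Let $Y=\cj{h_p}{p\in\Por}$. Then $|Y|\leq|\Por|<\kappa$, so $Y$ has the required size, and it remains to verify that every $f\in\omega^\omega$ that is $\sqsubset$-unbounded over $Y$ satisfies $\Vdash f\not\sqsubset\dot h$.

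I verify this by contraposition. Suppose $q\in\Por$ and $q\Vdash f\sqsubset\dot h$; I shall produce $p\in\Por$ with $f\sqsubset h_p$, showing that $f$ is not $\sqsubset$-unbounded over $Y$. First I reduce to a single level: since $\sqsubset=\bigcup_{n<\omega}\sqsubset_n$, the prescription ``the least $n<\omega$ such that $f\sqsubset_n\dot h$'' defines a $\Por$-name (definable from $\dot h$) which $q$ forces to denote a natural number; extending $q$ to decide its value, I obtain $q'\leq q$ and $n<\omega$ with $q'\Vdash f\sqsubset_n\dot h$.

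The heart of the argument is now the closedness of $\sqsubset_n$: fix a tree $T_n$ (on finite sequences of pairs of naturals) such that, for all $a,b\in\omega^\omega$, $a\sqsubset_n b$ holds iff every finite approximation $(a\frestr m,b\frestr m)$ lies in $T_n$. Apply the recursion above to $p:=q'$, obtaining for each $m<\omega$ a condition $r_m\leq q'$ with $r_m\Vdash\dot h\frestr m=h_{q'}\frestr m$. Since $r_m\leq q'$ and $q'\Vdash f\sqsubset_n\dot h$, we get $r_m\Vdash(f\frestr m,\dot h\frestr m)\in T_n$, and hence $r_m\Vdash(f\frestr m,h_{q'}\frestr m)\in T_n$. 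But the latter is a statement about ground-model finite objects forced by a condition, so it is true in $V$; as $m$ was arbitrary, $f\sqsubset_n h_{q'}$, whence $f\sqsubset h_{q'}$ with $h_{q'}\in Y$ --- contradicting that $f$ is $\sqsubset$-unbounded over $Y$. This proves $(+^\kappa_{\Por,\sqsubset})$. For the last assertion, Cohen forcing $\Cor$ is countable, so $|\Cor|=\aleph_0<\aleph_1$, and taking $\kappa=\aleph_1$ in what has just been shown yields $(+_{\Cor,\sqsubset})$. The only points needing care are the reduction to one fixed relation $\sqsubset_n$ forced by a single condition, and the observation that closedness of $\sqsubset_n$ is precisely what converts the finitely-much-forced data $r_m\Vdash\dot h\frestr m=h_{q'}\frestr m$ into the full relation $f\sqsubset_n h_{q'}$; the rest is routine bookkeeping.
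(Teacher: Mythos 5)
Your proposal is correct and uses essentially the same argument as the paper: you build $Y$ from the name $\dot h$ by choosing, below each condition, a decreasing sequence that decides longer and longer initial segments of $\dot h$, and then you invoke closedness of the relations $\sqsubset_n$ to transfer the finitely-decided information to the ground model. The paper runs the verification directly (showing $\{q:q\Vdash f\not\sqsubset_m\dot h\}$ is dense below each $p$, using that $(\sqsubset_m)_f$ is closed in $\omega^\omega$) while you argue by contraposition with a tree presentation of $\sqsubset_n$, but these are merely cosmetic repackagings of the same construction and the same use of closedness.
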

The foregoing result, in the case of ccc posets, follows directly from Lemma \ref{EqvNice} and \cite[Thm. 6.4.7]{barju}. By a generalization of the technique of the proof of \cite[Lemma. 3.3.19]{barju}, the ccc assumption can be omitted.
\begin{proof}
   Put $\Por=\cj{p_\alpha}{\alpha<\mu}$ where $\mu:=|\Por|<\kappa$. Let $\dot{h}$ be a $\Por$-name for a real in $\omega^\omega$.
   For each $\alpha<\mu$, choose $\langle q_n^\alpha\rangle_{n<\omega}$ a decreasing sequence in $\Por$ and $h_\alpha\in\omega^\omega$ such that
   $q_0^\alpha=p_\alpha$ and, for every $n<\omega$, $q_n^\alpha\Vdash\dot{h}\frestr n=h_\alpha\frestr n$. It suffices to prove
   that, if $f\in\omega^\omega$ and $\forall_{\alpha<\mu}(f\not\sqsubset h_\alpha)$ then $\Vdash f\not\sqsubset\dot{h}$, that is,
   $\forall_{p\in\Por}\forall_{m<\omega}\exists_{q\leq p}(q\Vdash f\not\sqsubset_m\dot{h})$. Fix $p\in\Por$ and $m<\omega$, so there exists
   an $\alpha<\mu$ such that $p=p_\alpha$. As $f\not\sqsubset h_\alpha$ and $(\sqsubset_m)_f:=\cj{g\in\omega^\omega}{f\sqsubset_m g}$ is closed, there exists $n<\omega$ such that
   $[h_\alpha\frestr n]\cap (\sqsubset_m)_f=\varnothing$, so $q_n^\alpha\Vdash[\dot{h}\frestr n]\cap (\sqsubset_m)_f=[h_\alpha\frestr n]\cap (\sqsubset_m)_f=\varnothing$.
   Therefore, $q^\alpha_n\Vdash\dot{h}\notin (\sqsubset_m)_f$, that is, $q^\alpha_n\Vdash f\not\sqsubset_m\dot{h}$ with $q_n^\alpha\leq p_\alpha=p$.
\end{proof}
The following particular cases of $\sqsubset$ are presented in \cite{brendle}, \cite{jushe} and in \cite[Section 6.5]{barju}.

\subsection{Preserving non-meager sets}\label{SubsecNonMeag}

For $f,g\in\omega^\omega$, define $f\eqcirc_n g\sii\forall_{k\geq n}(f(k)\neq g(k))$, so $f\eqcirc g\sii\forall^\infty_{k\in\omega}(f(k)\neq
g(k))$. From the characterization of covering and uniformity of category (see \cite[Thm. 2.4.1 and 2.4.7]{barju}), it follows that
$\bfrak_{\eqcirc}=\non(\Mwf)$ and $\dfrak_{\eqcirc}=\cov(\Mwf)$.

\subsection{Preserving unbounded families}\label{SubsecUnbd}

For $f,g\in\omega^\omega$, define $f<^*_n g\sii\forall_{k\geq n}(f(k)<g(k))$, so $f<^*g\sii\forall^\infty_{k\in\omega}(f(k)<g(k))$. Clearly,
$\bfrak_{<^*}=\bfrak$ and $\dfrak_{<^*}=\dfrak$. $(+_{\Bor,<^*})$ holds because $\Bor$ is $\omega^\omega$-bounding, also
\begin{lemma}[Miller, \cite{miller}]\label{EvDiffPlus}
    $(+_{\Eor,<^*})$ holds.
\end{lemma}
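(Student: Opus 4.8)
**Proof plan for Lemma \ref{EvDiffPlus}: $(+_{\Eor,<^*})$ holds.**

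The plan is to unwind the definition of $(+_{\Eor,<^*})$ and show that for every $\Eor$-name $\dot{h}$ of a real in $\omega^\omega$, one can find a countable $Y\subseteq\omega^\omega$ such that any $f$ which is unbounded over $Y$ (in the sense that $f\not<^* g$ for all $g\in Y$, i.e. $f(k)\geq g(k)$ for infinitely many $k$) satisfies $\Vdash_{\Eor} f\not<^*\dot{h}$. The key structural fact about the eventually different reals forcing $\Eor$ is that it is $\sigma$-centered (in fact its conditions are pairs $(s,F)$ with $s$ a finite partial function and $F$ a finite set of reals, and conditions sharing the same stem $s$ are compatible), and, more importantly, that the generic real added is a \emph{slalom-like} object whose growth can be controlled: the finite side conditions only forbid finitely many values at each coordinate, so the generic real, and hence any name for a real read off in a bounded way, cannot be forced to dominate. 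I would exploit this by an interpretation argument analogous to the proof of Lemma \ref{smallPlus}.

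First I would fix a countable elementary submodel $M\prec H_\chi$ with $\Eor,\dot{h}\in M$ and set $Y:=\omega^\omega\cap M$; this is the candidate witness. Now suppose $f\in\omega^\omega$ is $<^*$-unbounded over $Y$, and suppose toward a contradiction that some condition $p\in\Eor$ forces $f<^*\dot{h}$, say $p\Vdash\forall_{k\geq n}(f(k)<\dot{h}(k))$ for some $n$. Working inside $M$ (using that $\dot h, p, n$ may be taken in $M$ by elementarity, after first replacing $p$ by a condition in $M$ below which the statement is decided — here one uses that the relevant dense sets and the countable set of possible stems lie in $M$), I would build a fusion-style decreasing sequence $\langle q_k\rangle$ of conditions below $p$ together with a function $g\in\omega^\omega\cap M$ such that $q_k\Vdash\dot h(k)<g(k)$ for each $k$: this is possible precisely because $\Eor$ has the property that for any condition $q$ and any coordinate $k$ there is $q'\leq q$ and an $m$ with $q'\Vdash\dot h(k)=m$ (the value is decided by extending the stem), and we may keep the side-condition sets bounded along the way because $\sigma$-centeredness lets us amalgamate. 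More carefully, the standard argument (Miller's) is that $\Eor$ is \emph{good} for the unbounded relation because the eventually different generic does not add a dominating real — so any real named by $\dot h$ is, on a ground-model-computable sense, bounded by a ground-model function $g$, and this $g$ can be taken in $M$, hence $g\in Y$. Then $f<^* g$ would follow from $p\Vdash f<^*\dot h\leq^* g$ via absoluteness of the arithmetic relation $<^*$, contradicting that $f$ is unbounded over $Y$.

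The main obstacle, and the only place requiring real work, is the construction of the ground-model bound $g\in M$ for the name $\dot h$, i.e. verifying that $\Eor$ does not add a dominating real and doing so in a sufficiently uniform way that $g$ lands in the chosen submodel. This is exactly the content of Miller's theorem on the eventually different forcing, and I would either cite it directly or reprove it via the $\sigma$-centered-plus-finite-side-conditions combinatorics sketched above: given $\dot h$ and $p$, for each $k$ enumerate the countably many stems extending that of $p$, for each stem pick (by a dense-set argument) a further extension deciding $\dot h(k)$, take $g(k)$ to be one more than the maximum of those finitely-many-per-stem decided values along a fixed enumeration, and check by a density/compatibility argument that no condition can force $\dot h(k)\geq g(k)$ on an infinite set while remaining below $p$. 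Everything after that bound is produced is routine absoluteness, exactly as in the proof of Lemma \ref{smallPlus}. Once $g\in\omega^\omega\cap M=Y$ is in hand, the contradiction with $f\not<^* g$ closes the argument, so $(+_{\Eor,<^*})$ holds.
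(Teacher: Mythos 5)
The paper itself supplies no proof here; it simply cites Miller. So the only question is whether your reconstruction is sound, and unfortunately it contains a substantive error at its core.

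You reduce the lemma to producing a ground-model function $g\in M$ with $p\Vdash\dot h\leq^* g$, from which you derive $f<^* g$ and a contradiction. But such a $g$ need not exist: $\Eor$ is \emph{not} $\omega^\omega$-bounding. The eventually different generic $e$ is itself unbounded over the ground model (given $(s,F)$ and $g\in V$, one can always push the stem to a value exceeding $g$ at a new coordinate, since only $|F|$-many values are forbidden there); equivalently, $\Eor$ adds Cohen reals. What Miller proves is the strictly weaker fact that $\Eor$ adds no \emph{dominating} real, and your argument conflates the two. The \lcom fusion\rcom\ step has the same flaw: a decreasing sequence $\langle q_k\rangle$ with $q_k\Vdash\dot h(k)<g(k)$ has no reason to have a lower bound, because deciding $\dot h(k)$ forces you to lengthen the stem, which moves you out of the centered piece; $\sigma$-centeredness only lets you amalgamate side conditions over a \emph{fixed} stem. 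Relatedly, your description of $g(k)$ as \lcom the maximum of those finitely-many-per-stem decided values\rcom\ is not well-defined: there are countably many candidate stems, and the set of decided values they produce is in general infinite.

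The correct shape of the argument does not attempt to bound $\dot h$ at all. Instead, for each stem $s$ and each $N<\omega$ one constructs a single $g_{s,N}\in\omega^\omega$ (this is where Miller's combinatorial rank analysis of $\Eor$-names enters) with the weaker property that below every $p=(s,F)$ with $|F|\leq N$ and for every $k$, there is some $q\leq p$ forcing $\dot h(k)\leq g_{s,N}(k)$. Putting the countably many $g_{s,N}$ (for $s,N$ ranging over a countable elementary submodel containing $\dot h$) into $Y$ then works: if $f$ is $<^*$-unbounded over $Y$ and $p=(s,F)\Vdash\forall_{k\geq n}(f(k)<\dot h(k))$, pick $k\geq n$ with $f(k)\geq g_{s,N}(k)$ (possible since $f\not<^* g_{s,N}$) and pass to $q\leq p$ forcing $\dot h(k)\leq g_{s,N}(k)\leq f(k)$, a contradiction. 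This only uses non-domination \lcom locally\rcom, piece by centered piece, and never asserts $\Vdash\dot h\leq^* g$ for any single $g$. As written, your proposal would also (vacuously) \lcom prove\rcom\ $(+_{\Dor,<^*})$ by the same reasoning if one were careless, which is false since $\Dor$ is $\sigma$-centered and adds a dominating real; the missing ingredient is exactly the structural fact about $\Eor$ that your sketch does not isolate.
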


\subsection{Preserving null-covering families}\label{SubsecCovNull}

This particular case is a variation of the case for fullness considered in section 3.2 of \cite{brendle}. Fix, from now on, $\langle
I_n\rangle_{n<\omega}$ an interval partition of $\omega$ such that $\forall_{n<\omega}(|I_n|=2^{n+1})$. For $f,g\in2^\omega$ define
$f\pitchfork_ng\sii\forall_{k\geq n}(f\frestr I_k\neq g\frestr I_k)$, so $f\pitchfork g\sii \forall^\infty_{k<\omega}(f\frestr I_k\neq g\frestr
I_k)$. Clearly, $(\pitchfork)^g$ is a co-null $F_\sigma$ meager set.
\begin{lemma}[{\cite[Lemma $1^*$]{brendle}}]\label{centeredFork}
   Given $\mu<\kappa$ infinite cardinal, every $\mu$-centered forcing notion satisfies $(+^\kappa_{\cdot,\pitchfork})$.
\end{lemma}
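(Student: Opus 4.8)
The plan is to produce, directly from an arbitrary $\Por$-name $\dot h$ for an element of $2^\omega$, a set $Y\subseteq 2^\omega$ of size $\leq\mu<\kappa$ witnessing $(+^\kappa_{\Por,\pitchfork})$. Fix a decomposition $\Por=\bigcup_{i<\mu}\Por_i$ with each $\Por_i$ centered. The heart of the matter is the following local fact: for each $i<\mu$ and each $k<\omega$ there is a single $s_{i,k}\in 2^{I_k}$ such that, for every $p\in\Por_i$, some $q\leq p$ forces $\dot h\frestr I_k=s_{i,k}$. Granting this, define $g_i\in 2^\omega$ by $g_i\frestr I_k=s_{i,k}$ for all $k$, and set $Y=\cj{g_i}{i<\mu}$; clearly $|Y|\leq\mu<\kappa$.

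To prove the local fact, fix $i$ and $k$, and for $p\in\Por_i$ let $C_p\subseteq 2^{I_k}$ be the set of $s$ such that some $q\leq p$ forces $\dot h\frestr I_k=s$. Each $C_p$ is nonempty, since the conditions deciding $\dot h\frestr I_k$ form a dense set and $2^{I_k}$ is finite. Moreover $\cj{C_p}{p\in\Por_i}$ has the finite intersection property: given $p_1,\dots,p_m\in\Por_i$, centeredness of $\Por_i$ yields a common lower bound $q\in\Por$, and then refining $q$ to some $r\leq q$ that decides $\dot h\frestr I_k=s$ shows $s\in C_{p_1}\cap\dots\cap C_{p_m}$ (as $r\leq p_j$ for each $j$). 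Since $2^{I_k}$ is finite, $\bigcap_{p\in\Por_i}C_p\neq\vacio$, and we pick $s_{i,k}$ to be any element of this intersection.

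It remains to check that $Y$ works. Suppose $f\in 2^\omega$ is $\pitchfork$-unbounded over $Y$, i.e.\ $f\not\pitchfork g_i$ for every $i<\mu$, and suppose toward a contradiction that some $p\in\Por$ and $n<\omega$ satisfy $p\Vdash f\pitchfork_n\dot h$, that is, $p\Vdash\forall_{k\geq n}(f\frestr I_k\neq\dot h\frestr I_k)$. Choose $i<\mu$ with $p\in\Por_i$. Since $f\not\pitchfork g_i$, there is $k\geq n$ with $f\frestr I_k=g_i\frestr I_k=s_{i,k}$; by the choice of $s_{i,k}$ there is $q\leq p$ with $q\Vdash\dot h\frestr I_k=s_{i,k}=f\frestr I_k$, contradicting $p\Vdash f\frestr I_k\neq\dot h\frestr I_k$. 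Hence no condition forces $f\pitchfork_n\dot h$ for any $n$, so $\Vdash f\not\pitchfork\dot h$, which is exactly what $(+^\kappa_{\Por,\pitchfork})$ demands. The only delicate point is the finite intersection argument of the second paragraph, which works precisely because each block $2^{I_k}$ is finite; centeredness only controls finitely many conditions at once, and the exact sizes $|I_k|=2^{n+1}$ are irrelevant here (they are needed only for the measure-theoretic properties of $\pitchfork$ exploited elsewhere).
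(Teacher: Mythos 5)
Your proof is correct, and it is essentially the argument the paper has in mind: the paper gives no proof here, only the pointer to Brendle's Lemmas 1 and 6, whose underlying idea is exactly your construction of, for each centered piece $\Por_i$ and each block $I_k$, a single value $s_{i,k}\in 2^{I_k}$ that every $p\in\Por_i$ has an extension forcing for $\dot h\frestr I_k$ (your finite-intersection step could equivalently be phrased: otherwise finitely many $p_s\Vdash\dot h\frestr I_k\neq s$, $s\in 2^{I_k}$, would have a common extension forcing $\dot h\frestr I_k$ to avoid all of $2^{I_k}$, which is absurd). Your closing remarks are also accurate: centeredness in the sense of finite compatibility is what is used, and the sizes $|I_k|=2^{k+1}$ play no role in this lemma.
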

The proof of this result follows from the ideas of the proofs of Lemmas 1 and 6 in \cite{brendle}.\\
\begin{lemma}\label{InvforPitchfork}
   $\cov(\Nwf)\leq\bfrak_\pitchfork\leq\non(\Mwf)$ and $\cov(\Mwf)\leq\dfrak_\pitchfork\leq\non(\Nwf)$.
\end{lemma}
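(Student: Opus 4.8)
The plan is to identify the relation $\pitchfork$ with a combinatorial characterization of $\cov(\Nwf)$ and $\non(\Nwf)$, and then sandwich it between the meager invariants using the relation $\eqcirc$ from Subsection \ref{SubsecNonMeag}. For the outer bounds, recall the classical Bartoszy\'nski-style characterization (see \cite[Thm. 2.3.1 and Thm. 2.5.7]{barju}): $\cov(\Nwf)$ is the least size of a family $F\subseteq 2^\omega$ such that for every ``slalom''-type object — here, concretely, for every $g\in 2^\omega$ — some $f\in F$ satisfies $\forall^\infty k(f\frestr I_k\neq g\frestr I_k)$, provided the interval lengths grow fast enough; the condition $|I_n|=2^{n+1}$ is exactly what makes $(\pitchfork)^g$ co-null, so that the family of sets $\{f\in 2^\omega: f\pitchfork g\}^c = \{f: \exists^\infty k\, f\frestr I_k=g\frestr I_k\}$ is a null set, and a $\pitchfork$-unbounded family is precisely a family $F$ such that $\bigcup_{g} \{f: f\not\pitchfork g\}$... — more carefully, $F$ is $\pitchfork$-unbounded iff for every $g$ there is $f\in F$ with $f\not\pitchfork g$, i.e. $f$ lies in the complement of the null set $(\pitchfork)^g$; dually a $\pitchfork$-dominating family $D$ gives for each $x$ some $f\in D$ with $x\pitchfork f$, i.e. $x$ lies in the null set $(\pitchfork)^f$. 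Hence $\bigcup_{f\in D}(\pitchfork)^f=2^\omega$ exhibits $D$ as a covering family of null sets, giving $\cov(\Nwf)\leq\dfrak_\pitchfork$; and the complement of a single $(\pitchfork)^f$ is null-conull, so a $\pitchfork$-unbounded family, by hitting the complement of every $(\pitchfork)^g$, is exactly a non-null set in disguise... — that last step needs care, and I return to it below to get $\bfrak_\pitchfork\leq\non(\Mwf)$ and $\dfrak_\pitchfork\leq\non(\Nwf)$ correctly.

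For the bounds involving category, the key observation is that $\pitchfork$ refines $\eqcirc$ in the following sense: working on the product space $\prod_n 2^{I_k}$ (whose members are coded by reals, as remarked after Context \ref{ContextUnbd}), define $F(k)=f\frestr I_k$ and $G(k)=g\frestr I_k$; then $f\pitchfork g$ on $2^\omega$ corresponds exactly to $F\eqcirc G$ on $\prod_k 2^{I_k}$. Since the relation $\eqcirc$ on any such $\prod_k 2^{I_k}$ (countably infinite values) has $\bfrak_{\eqcirc}=\non(\Mwf)$ and $\dfrak_{\eqcirc}=\cov(\Mwf)$ by the characterization of the category invariants (Subsection \ref{SubsecNonMeag}, via \cite[Thm. 2.4.1 and 2.4.7]{barju}, which are insensitive to replacing $\omega^\omega$ by $\prod_k A_k$ for finite $A_k$ of unbounded size), we get $\bfrak_\pitchfork = \non(\Mwf)$ and $\dfrak_\pitchfork = \cov(\Mwf)$ \emph{at the level of the product space}. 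But passing back to $2^\omega$ is a coarsening: a $\pitchfork$-unbounded family in $2^\omega$ maps to a $\eqcirc$-unbounded family in $\prod_k 2^{I_k}$, giving $\non(\Mwf)=\bfrak_{\eqcirc}\leq\bfrak_\pitchfork$; and dually $\dfrak_\pitchfork\leq\dfrak_{\eqcirc}=\cov(\Mwf)$ is \emph{false} in that direction — so I must instead argue $\cov(\Mwf)=\dfrak_{\eqcirc}\leq\dfrak_\pitchfork$ and $\bfrak_\pitchfork\leq\bfrak_{\eqcirc}=\non(\Mwf)$ by checking which way the coding map preserves dominating/unbounded families. Concretely: every $\pitchfork$-dominating family in $2^\omega$ yields a $\eqcirc$-dominating family upstairs (same coding), so $\cov(\Mwf)=\dfrak_\eqcirc\le\dfrak_\pitchfork$, and this combines with $\dfrak_\pitchfork\le\non(\Nwf)$ from the null-set argument to give the second chain; symmetrically $\bfrak_\pitchfork\le\bfrak_\eqcirc=\non(\Mwf)$ because a witness non-meager set in the product, pushed down, is still $\pitchfork$-unbounded, and this combines with $\cov(\Nwf)\le\bfrak_\pitchfork$ to give the first chain.

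I would organize the write-up as two short paragraphs. \emph{First}, the null-set direction: spell out that $(\pitchfork)^g$ is null (immediate from $|I_k|=2^{k+1}$, since $\sum_k 2^{-2^{k+1}}<\infty$ controls the measure of $\{f:\exists^\infty k\, f\frestr I_k = g\frestr I_k\}$ — wait, one needs $f\frestr I_k = g\frestr I_k$ to have probability $2^{-|I_k|}=2^{-2^{k+1}}$, summable, so by Borel–Cantelli that event has measure zero, hence $(\pitchfork)^g$ is conull, \emph{not} null — so it is the complement $\{f: f\not\pitchfork g\}$ that is null). Then: a $\pitchfork$-unbounded family $F$ satisfies $\bigcup_{g\in 2^\omega}\{f: f\not\pitchfork g\}\supseteq$... no — rather, for $F$ to be non-null would require... — the clean statement is: $F$ is $\pitchfork$-unbounded iff $F\not\subseteq (\pitchfork)^g$ for any $g$, iff $F$ meets the complement of each $(\pitchfork)^g$; since those complements are null, a \emph{non-null} set is automatically $\pitchfork$-unbounded, giving $\bfrak_\pitchfork\le\non(\Nwf)$ — which is not what we want either. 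The correct pairing is: $\cov(\Nwf)\le\bfrak_\pitchfork$ holds because fewer than $\cov(\Nwf)$ many sets $\{f:f\not\pitchfork g\}$ (each null) cannot cover $2^\omega$, so some $f$ avoids them all, i.e. $f\pitchfork g$ for all those $g$ — meaning a family of size $<\cov(\Nwf)$ of $g$'s is never a ``witness'' against $\pitchfork$-unboundedness... I see this requires stating things via $\dfrak$: \emph{$D$ is $\pitchfork$-dominating iff $\bigcup_{f\in D}(\pitchfork)^f = 2^\omega$... no, iff for all $x$, $\exists f\in D$, $x\in (\pitchfork)^f$... but $(\pitchfork)^f$ is conull so one conull set already covers a comeager-measure portion} — hmm, so $\dfrak_\pitchfork$ could be $1$?! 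No: $(\pitchfork)^f$ is conull but not everything, and a single conull set misses a null set; to cover $2^\omega$ by the conull sets $(\pitchfork)^f$ you need at least $\cov(\text{(complement is null)})$... which is $\cov(\Nwf)$. So $\cov(\Nwf)\le\dfrak_\pitchfork$, combining with $\dfrak_\pitchfork\le\non(\Nwf)$?? — these don't chain. I realize the lemma as stated has $\cov(\Nwf)\le\bfrak_\pitchfork$ and $\cov(\Mwf)\le\dfrak_\pitchfork\le\non(\Nwf)$, so the correct assignments must be: the \emph{complements} of $(\pitchfork)^g$ are null (done above, call them $N_g$), then $F$ $\pitchfork$-unbounded $\iff$ $F\cap N_g^c\neq\emptyset$ for... no. \textbf{The main obstacle}, and the step I would be most careful about, is exactly this bookkeeping: correctly matching each of the four inequalities to either the "$(\pitchfork)^g$ is conull, its complement $N_g$ is null" fact (yielding the two $\cov(\Nwf)/\non(\Nwf)$ bounds) or the "$\pitchfork$ codes $\eqcirc$ on a product space" fact (yielding the two $\cov(\Mwf)/\non(\Mwf)$ bounds), taking care that the coding map $f\mapsto(f\frestr I_k)_k$ is a bijection $2^\omega\to\prod_k 2^{I_k}$ and thus preserves unbounded and dominating families \emph{exactly}, so the category bounds are in fact equalities $\bfrak_\pitchfork\le\non(\Mwf)$ and $\cov(\Mwf)\le\dfrak_\pitchfork$ — the correct orientation — while the measure bounds $\cov(\Nwf)\le\bfrak_\pitchfork$ and $\dfrak_\pitchfork\le\non(\Nwf)$ follow from a size-$<\cov(\Nwf)$ family of $N_g$'s failing to cover $2^\omega$, resp. a $\pitchfork$-unbounded set of reals being a non-null set witness because it meets the complement of every conull $(\pitchfork)^f$. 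I would double-check the measure estimate $\sum_k 2^{-|I_k|}<\infty$ and the exact Bartoszy\'nski characterization references before finalizing, and present the final argument as: (1) $(\pitchfork)^g$ conull $\Rightarrow$ measure bounds; (2) coding $\pitchfork\leftrightarrow\eqcirc$ $\Rightarrow$ category bounds; done.
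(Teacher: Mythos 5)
Your measure-side argument is, at its core, the same as the paper's: observe that $(\pitchfork)^g$ is conull (by Borel--Cantelli, since $\sum_k 2^{-|I_k|}=\sum_k 2^{-2^{k+1}}<\infty$), so its complement $N_g:=\{f:f\not\pitchfork g\}$ is null, and then read off the two inequalities $\cov(\Nwf)\le\bfrak_\pitchfork$ and $\dfrak_\pitchfork\le\non(\Nwf)$. Your treatment of the first is correct once cleaned up (if $F$ is $\pitchfork$-unbounded then, by symmetry of $\pitchfork$, $\bigcup_{f\in F}N_f=2^\omega$, so $|F|\ge\cov(\Nwf)$). But your final summary misassigns the second: you pair $\dfrak_\pitchfork\le\non(\Nwf)$ with ``a $\pitchfork$-unbounded set of reals being a non-null set witness because it meets the complement of every conull $(\pitchfork)^f$.'' That statement, even if it were true, would give $\non(\Nwf)\le\bfrak_\pitchfork$, which is neither one of the four inequalities in the lemma nor correct in general (meeting each of a collection of null sets does not make a set non-null). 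What is actually needed is the dual: any \emph{non-null} set $D$ is $\pitchfork$-\emph{dominating}, because for each $x$ the set $(\pitchfork)^x=\{f:x\pitchfork f\}$ is conull, hence $D\cap(\pitchfork)^x\neq\varnothing$; that is the argument that yields $\dfrak_\pitchfork\le\non(\Nwf)$.

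For the category half you take a genuinely different route from the paper: you transport $\pitchfork$ to $\eqcirc$ on $\prod_k 2^{I_k}$ via the homeomorphism $f\mapsto(f\!\restriction\! I_k)_k$ and then invoke the Bartoszy\'nski-type characterizations $\bfrak_{\eqcirc}=\non(\Mwf)$, $\dfrak_{\eqcirc}=\cov(\Mwf)$ on that product space. This works, and in fact gives equalities, but it is heavier than what the lemma requires and rests on a stronger fact than the references you cite: \cite[Thm.~2.4.1, 2.4.7]{barju} are stated for $\omega^\omega$, and transferring the \emph{hard} directions to a product of finite sets of unbounded size is a separate (true, but nontrivial) step that deserves its own justification via chopped reals. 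The paper instead uses only the elementary facts already built into Context~\ref{ContextUnbd}: each $(\pitchfork)^g$ is $F_\sigma$ meager, so any non-meager set fails to be contained in any $(\pitchfork)^g$ and is thus $\pitchfork$-unbounded, giving $\bfrak_\pitchfork\le\non(\Mwf)$; and for $\cov(\Mwf)\le\dfrak_\pitchfork$ it suffices to note that a $\pitchfork$-dominating family $D$ yields a meager cover $\bigcup_{f\in D}(\pitchfork)^f=2^\omega$ (the paper phrases this via a Cohen real over a small model, which is equivalent). So: your category-side idea is valid but not the minimal argument; your measure-side idea is the right one but one of the two assignments in your closing sentence is wrong and should be replaced by the ``non-null $\Rightarrow$ dominating'' argument above.
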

\begin{proof}
   $\bfrak_\pitchfork\leq\non(\Mwf)$ follows from the fact that any subset of $2^\omega$ that is $\pitchfork$-bounded should be meager.
   $\cov(\Mwf)\leq\dfrak_\pitchfork$ follows from the fact that, for any model $M$ of enough $\thzfc$ of size $<\cov(\Mwf)$,
   there exists a Cohen real $c$ over $M$, so $c$ will be $\pitchfork$-unbounded over $M$. Noting that $(\pitchfork)^g$ is a co-null $F_\sigma$
   set for every $g\in2^\omega$, we get directly that $\cov(\Nwf)\leq\bfrak_\pitchfork$ and $\dfrak_\pitchfork\leq\non(\Nwf)$.
\end{proof}

\subsection{Preserving union of null sets is not null}\label{SubsecAddNull}

Define
\[\Sl=\cj{\varphi:\omega\to[\omega]^{<\omega}}{\exists_{k<\omega}\forall_{n<\omega}(|\varphi(n)|\leq(n+1)^k)}\]
the \emph{space of slaloms}. As a Polish space, this is coded by reals in $\omega^\omega$. For $f\in\omega^\omega$ and a slalom $\varphi$,
define $f\subseteq^*_n\varphi\sii\forall_{k\geq n}(f(k)\in\varphi(k))$, so $f\subseteq^*\varphi\sii \forall_{k<\omega}^\infty(f(k)\in
\varphi(k))$. From the characterization given by \cite[Thm. 2.3.9]{barju}, $\bfrak_{\subseteq^*}=\add(\Nwf)$ and
$\dfrak_{\subseteq^*}=\cof(\Nwf)$.
\begin{lemma}[Judah and Shelah, \cite{jushe} and \cite{brendle}]\label{centeredAddNull}
   Given $\mu<\kappa$ infinite cardinals, every $\mu$-centered forcing notion satisfies $(+^\kappa_{\cdot,\subseteq^*})$.
\end{lemma}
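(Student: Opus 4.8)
The plan is to follow the pattern of Lemmas \ref{smallPlus} and \ref{centeredFork}: out of each centered piece of the poset, extract one slalom that ``absorbs'' every value that piece can force into the name, and then take $Y$ to be the collection of these slaloms. So fix a $\mu$-centered poset $\Por=\bigcup_{i<\mu}P_i$ with each $P_i$ centered, and a $\Por$-name $\dot{h}$ for a slalom. First I would refine the decomposition so that a \emph{uniform} width bound is available on each piece: for $(i,c)\in\mu\times\omega$ put
\[P_{i,c}=\cj{p\in P_i}{p\Vdash\forall_{k<\omega}(|\dot{h}(k)|\leq(k+1)^c)}.\]
Each $P_{i,c}$ is centered, being a subset of $P_i$, and, since $\Vdash\dot{h}\in\Sl$, every $p\in\Por$ has an extension lying in some $P_{i,c}$ (extend $p$ so as to decide a witnessing exponent for $\dot h\in\Sl$).

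For each pair $(i,c)$ define $\varphi_{i,c}:\omega\to[\omega]^{<\omega}$ by $\varphi_{i,c}(k)=\cj{m<\omega}{\exists p\in P_{i,c}\ (p\Vdash m\in\dot{h}(k))}$. The key step is to check that $\varphi_{i,c}\in\Sl$, with witnessing exponent $c$: if $m_0,\dots,m_{l-1}$ are distinct members of $\varphi_{i,c}(k)$, pick $p_j\in P_{i,c}$ forcing $m_j\in\dot{h}(k)$; by centeredness $\{p_0,\dots,p_{l-1}\}$ has a common lower bound $q$, which then forces $\{m_0,\dots,m_{l-1}\}\subseteq\dot{h}(k)$, whence $l\leq(k+1)^c$ because $q$, like every member of $P_{i,c}$, forces $|\dot{h}(k)|\leq(k+1)^c$. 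Thus $|\varphi_{i,c}(k)|\leq(k+1)^c$ for all $k$. Now set $Y=\cj{\varphi_{i,c}}{(i,c)\in\mu\times\omega}$; since $\mu$ is infinite, $|Y|\leq\mu<\kappa$.

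Finally I would verify that $Y$ witnesses $(+^\kappa_{\Por,\subseteq^*})$ for $\dot{h}$. Let $f\in\omega^\omega$ be $\subseteq^*$-unbounded over $Y$, i.e.\ $f\not\subseteq^*\varphi_{i,c}$ for every $(i,c)$, and suppose toward a contradiction that $p\Vdash f\subseteq^*_n\dot{h}$ for some $p\in\Por$ and $n<\omega$. Extending $p$, pick $q\leq p$ with $q\in P_{i,c}$ for suitable $i,c$. Then $q\Vdash f(k)\in\dot{h}(k)$ for every $k\geq n$, so by the definition of $\varphi_{i,c}$, with $q$ as witnessing condition, $f(k)\in\varphi_{i,c}(k)$ for every $k\geq n$; that is, $f\subseteq^*\varphi_{i,c}$, contradicting the choice of $f$. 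Hence $\Vdash f\not\subseteq^*\dot{h}$, as required.

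The only genuinely delicate point I anticipate is the width estimate for $\varphi_{i,c}$: it is the combination of centeredness of the piece with a uniform slalom bound on that piece that keeps the ``union of all forced-in values'' finite-valued, hence a legitimate slalom. The refinement by $c$ is exactly the bookkeeping needed to have such a uniform bound available on each piece — without it, the unrefined analogue $\varphi_i(k)=\cj{m<\omega}{\exists p\in P_i\,(p\Vdash m\in\dot{h}(k))}$ can fail to take finite values.
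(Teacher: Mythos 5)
Your proof is correct, and the paper itself does not include an argument for this lemma — it is cited from Judah--Shelah and Brendle without proof — so there is no in-paper proof to compare against. The argument you give is the standard one: decompose into centered pieces, and for each piece collect the ``forced-in'' values at each coordinate to get a single slalom that the piece cannot escape. The refinement of the decomposition by the exponent $c$ is exactly the point that is easy to overlook and that you correctly flag: without first stabilizing a width bound $(k+1)^c$ on each piece, the set $\varphi_i(k)=\cj{m}{\exists p\in P_i\,(p\Vdash m\in\dot{h}(k))}$ need not even be finite (a centered piece can contain a chain forcing ever-larger finite approximations to $\dot h(k)$), so $\varphi_i$ would not be a legitimate slalom. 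With the refinement, the centeredness-plus-uniform-bound argument does deliver $|\varphi_{i,c}(k)|\leq(k+1)^c$, and the rest is the same back-and-forth as in Lemma~\ref{smallPlus} and Lemma~\ref{centeredFork}. One small remark on the wording of the width estimate: the common lower bound $q$ of $p_0,\dots,p_{l-1}$ need not itself lie in $P_{i,c}$; what you actually use is that $q\leq p_0\in P_{i,c}$, so $q$ still forces the bound $|\dot h(k)|\leq(k+1)^c$ — the substance is right, but the parenthetical ``like every member of $P_{i,c}$'' slightly misstates which fact is being invoked.
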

\begin{lemma}[Kamburelis, \cite{kamburelis}]\label{spamPlus}
   Every boolean algebra with a strictly positive finitely additive measure (see \cite{kamburelis} for this concept)
   satisfies $(+_{\cdot,\subseteq^*})$. In particular, subalgebras of the random
   algebra satisfy that property.
\end{lemma}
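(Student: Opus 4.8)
The plan is to unwind Definition~\ref{DefPlusProp} for the relation $\subseteq^*$ and, from a given name $\dot\varphi$ for a slalom, manufacture a countable family $Y\subseteq\Sl$ of ground-model slaloms by means of the finitely additive measure. First I would reduce to the case where $\Por=\mathbb{B}$ is a \emph{complete} Boolean algebra: replace $\Por$ by its completion and $\mu$ by a finitely additive extension to it, which is automatically strictly positive because $\Por$ is dense in the completion and finitely additive measures are monotone. The second preliminary is that $\mathbb{B}$ is ccc, since an uncountable antichain of nonzero elements would, by pigeonhole, contain $n+1$ members each of $\mu$-measure $>1/n$ for some $n$, contradicting finite additivity. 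Using ccc, fix a countable maximal antichain $\{p_j\}_{j<\omega}$ of conditions deciding the least exponent witnessing $\dot\varphi\in\Sl$, say $p_j\Vdash\forall n\,(|\dot\varphi(n)|\le(n+1)^{k_j})$. Localizing the exponent to countably many values $k_j$ is precisely what will let the construction below produce genuine slaloms.

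For $i,n<\omega$ let $b^n_i=\|\,i\in\dot\varphi(n)\,\|\in\mathbb{B}$ be the Boolean value of $i\in\dot\varphi(n)$, and for $j<\omega$ and a positive rational $\varepsilon$ define $\psi_{j,\varepsilon}\colon\omega\to[\omega]^{<\omega}$ by $\psi_{j,\varepsilon}(n)=\{\,i<\omega:\mu(b^n_i\wedge p_j)\ge\varepsilon\,\}$ for $n\ge 1/\varepsilon$ and $\psi_{j,\varepsilon}(n)=\varnothing$ otherwise. The heart of the matter is the mass estimate
\[
\sum_{i<\omega}\mu(b^n_i\wedge p_j)\ \le\ (n+1)^{k_j}\qquad\text{for all }n<\omega,
\]
which I would prove by a finitary computation inside the finite subalgebra of $\mathbb{B}$ generated by $p_j$ and $b^n_0,\dots,b^n_{N-1}$: an atom of that algebra lying below $p_j$ and below $b^n_i$ for more than $(n+1)^{k_j}$ indices $i$ would force $|\dot\varphi(n)|>(n+1)^{k_j}$, so every surviving atom lies below at most $(n+1)^{k_j}$ of the $b^n_i$, and finite additivity gives $\sum_{i<N}\mu(b^n_i\wedge p_j)\le(n+1)^{k_j}\mu(p_j)\le(n+1)^{k_j}$; then let $N\to\infty$. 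This estimate yields $|\psi_{j,\varepsilon}(n)|\le(n+1)^{k_j}/\varepsilon\le(n+1)^{k_j+1}$ whenever $n\ge 1/\varepsilon$, so each $\psi_{j,\varepsilon}$ is a genuine slalom, and it lies in the ground model because it is definable from $\dot\varphi$ and $\mu$. Set $Y=\{\psi_{j,\varepsilon}:j<\omega,\ \varepsilon\in\Q\cap(0,\infty)\}$, a countable subset of $\Sl$.

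Finally I would verify that $Y$ witnesses $(+_{\Por,\subseteq^*})$ for $\dot\varphi$. Suppose $f\in\omega^\omega$ is $\subseteq^*$-unbounded over $Y$ yet $\Vdash f\not\subseteq^*\dot\varphi$ fails; then, passing to a suitable condition, there are a nonzero $p\in\mathbb{B}$ and an $m<\omega$ with $p\Vdash\forall n\ge m\,(f(n)\in\dot\varphi(n))$. Choose $j$ with $p\wedge p_j\ne 0$ (possible by maximality of the antichain) and a rational $\varepsilon\in(0,\mu(p\wedge p_j)]$. For every $n\ge m$ we have $p\wedge p_j\le b^n_{f(n)}\wedge p_j$, hence $\mu(b^n_{f(n)}\wedge p_j)\ge\varepsilon$ and thus $f(n)\in\psi_{j,\varepsilon}(n)$ for all $n\ge\max(m,1/\varepsilon)$; that is, $f\subseteq^*\psi_{j,\varepsilon}\in Y$, contradicting the choice of $f$. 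Hence $\Vdash f\not\subseteq^*\dot\varphi$, which is what Definition~\ref{DefPlusProp} requires. The ``in particular'' clause is then immediate: a subalgebra of the random algebra has the same zero element as the random algebra, so Lebesgue measure restricted to it is still strictly positive and finitely additive. The step I expect to be the main obstacle is the mass estimate with its uniform exponent --- and, closely related, the realization that one must first split along a countable maximal antichain deciding the slalom exponent, for otherwise the sets $\{i:\mu(b^n_i\wedge p_j)\ge\varepsilon\}$ need not grow slowly enough in $n$ to constitute a slalom.
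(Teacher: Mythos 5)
Your proof is correct, and since the paper itself gives no argument for this lemma (it simply cites \cite{kamburelis}), there is nothing to diverge from: your averaging argument --- splitting along a countable maximal antichain deciding a slalom exponent, the finitary mass estimate $\sum_i\mu(b^n_i\wedge p_j)\le(n+1)^{k_j}\mu(p_j)$ via atoms of a finite subalgebra, and the sets $\{i:\mu(b^n_i\wedge p_j)\ge\varepsilon\}$ as the witnessing ground-model slaloms --- is essentially Kamburelis's original proof. The one step you use without naming it, extending the finitely additive measure from $\Por$ to its completion while keeping strict positivity, is the standard Horn--Tarski extension theorem together with density, so it is a legitimate (and correctly justified) reduction.
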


\section{Models for the left hand side of Cichon's diagram}\label{SecModelLeft}

For any infinite cardinal $\lambda$, we use the notation
\begin{description}
  \item[$\mathbf{GCH}_\lambda$] For any infinite cardinal $\mu$,
       \[2^\mu=\left\{
            \begin{array}{ll}
               \lambda & \textrm{if $\mu<\cf(\lambda)$,}\\
               \lambda^+ & \textrm{if $\cf(\lambda)\leq\mu<\lambda$,}\\
               \mu^+ & \textrm{if $\lambda\leq\mu$.}
            \end{array}\right.\]
\end{description}
Throughout this section, we fix $\mu_1\leq\mu_2\leq\mu_3\leq\kappa$ regular uncountable cardinals, and $\lambda\geq\kappa$ cardinal. Also, fix
$V$ a model of $\thzfc+\mathbf{GCH}$. Models obtained in this section are direct consequences of the techniques used to obtain models in
\cite{brendle}.\\
The following result will be the starting point for all the models we get in this section and in section \ref{SecModelRight}.
\begin{theorem}\label{LeftCichonCovMLarge}
   In $V$, assume $\cf(\lambda)\geq\mu_3$. Then, there exists a ccc poset that forces $\mathbf{GCH}_\lambda$,
   $\add(\Nwf)=\mu_1$, $\cov(\Nwf)=\mu_2$, $\bfrak=\non(\Mwf)=\mu_3$ and $\cov(\Mwf)=\cfrak=\lambda$.
\end{theorem}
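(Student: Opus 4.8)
The plan is to build the required ccc poset as a finite support iteration of length $\lambda$ over the ground model $V$, where the iterands are chosen from $\{\Aor,\Bor,\Cor,\Eor,\mathds{1}\}$ according to a bookkeeping function that cycles through all relevant names. The target values are $\add(\Nwf)=\mu_1\le\cov(\Nwf)=\mu_2\le\bfrak=\non(\Mwf)=\mu_3$ and $\cov(\Mwf)=\cfrak=\lambda$, and the heart of the construction is a careful use of the preservation machinery of Section~\ref{SecPresUnbd}. First I would arrange the bookkeeping so that the iteration has three "stages" of interleaved work: cofinally often we force with the amoeba algebra $\Aor$ (to push $\add(\Nwf)$ up), cofinally often with the random algebra $\Bor$ restricted to small subalgebras to control $\cov(\Nwf)$, and cofinally often with the eventually different forcing $\Eor$ to control $\bfrak$ and $\non(\Mwf)$; Cohen reals appear automatically at limits of the finite support iteration. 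Because $\cf(\lambda)\ge\mu_3$, all these targets are $<\cf(\lambda)$ or equal to $\lambda$, so the bookkeeping can be set up to catch every name of a real appearing at some stage and the continuum in the extension will be exactly $\lambda$ (using $\mathbf{GCH}$ in $V$ to count nice names, which also yields $\mathbf{GCH}_\lambda$).

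Next I would verify the lower bounds. The full sequence of $\lambda$-many Cohen reals added at limit stages gives $\cov(\Mwf)=\lambda$ and hence $\cfrak=\lambda$; the interleaved amoeba reals give $\add(\Nwf)\ge\mu_1$ once the iteration is organized so that any $<\mu_1$-sized family of null sets from the extension already appears at an initial segment of length $<\mu_1$ which is then "dominated" by a later amoeba real — this is the standard Hechler/amoeba reflection argument for finite support iterations. Similarly the random-real interleaving handled with the matrix-style control (only adding randoms over appropriately chosen small submodels) delivers $\cov(\Nwf)\ge\mu_2$, and the eventually different reals give $\bfrak\ge\mu_3$ (each $<\mu_3$-family of reals is captured early and made bounded by a later eventually different real, which is unbounded over it). For the inequalities among $\mu_1,\mu_2,\mu_3$ there is nothing to do once the ZFC-provable inequalities $\add(\Nwf)\le\cov(\Nwf)$, $\cov(\Nwf)\le\non(\Mwf)$ (via $\cov(\Nwf)\le\dfrak$? no — via the diagram) and $\bfrak\le\non(\Mwf)$ are invoked, together with the fact that $\bfrak\le\cov(\Mwf)$? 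Actually the relevant provable bounds are $\add(\Nwf)\le\add(\Mwf)\le\bfrak$ and $\cov(\Nwf)\le\non(\Mwf)$; so I only need the matching \emph{upper} bounds, which is where the preservation theorems come in.

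The upper bounds are the real work. To get $\add(\Nwf)\le\mu_1$ I must exhibit, in the extension, a witnessing family: using Lemma~\ref{centeredAddNull} and Lemma~\ref{spamPlus}, the iteration from the stage where the last "$\mu_1$-many amoeba reals" are used onward is $\mu_1$-$\subseteq^*$-good in the relevant sense, so $\dfrak_{\subseteq^*}=\cof(\Nwf)$ stays... — more precisely I would show that the tail of the iteration after an appropriate initial segment $\Por_{\mu_1}$ preserves $(\blacktriangle,\subseteq^*,F,\mu_1)$ for the $F$ read off from $\Por_{\mu_1}$, via Theorem~\ref{preservPlus} applied to the $\mu_1$-cc tail whose iterands ($\Bor$, $\Cor$, $\Eor$, and $\Aor$-pieces that are small or $\sigma$-centered/random-like) all satisfy $(+^{\mu_1}_{\cdot,\subseteq^*})$; by Lemma~\ref{TriangleImpl} this forces $\bfrak_{\subseteq^*}=\add(\Nwf)\le|F|<\mu_1$... wait, this gives $\le\mu_1$ after re-indexing. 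The analogous arguments use Lemma~\ref{centeredFork} and Lemma~\ref{InvforPitchfork} to bound $\cov(\Nwf)\le\mu_2$ (the random algebras are only added $<\mu_2$-many times before an appropriate reflection point, and everything else is $\mu_2$-centered hence $(+^{\mu_2}_{\cdot,\pitchfork})$, giving $\dfrak_\pitchfork$ large over the reflected model, hence $\cov(\Nwf)\le\bfrak_\pitchfork$... I need to be careful about directions here), and Lemma~\ref{EvDiffPlus} together with $(+_{\Bor,<^*})$ and Lemma~\ref{smallPlus} to bound $\non(\Mwf)\le\mu_3$ via preserving unbounded families of size $\mu_3$; the characterization $\bfrak_{\eqcirc}=\non(\Mwf)$ from Subsection~\ref{SubsecNonMeag} handles $\non(\Mwf)$ directly if I instead preserve $(\blacktriangle,\eqcirc,F,\mu_3)$. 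The main obstacle is precisely the bookkeeping/reflection synchronization: I must choose the length-$\lambda$ iteration and its three interleaved blocks so that for each of the three invariants there is a cofinal sequence of reflection points $\langle\Por_{\alpha_\xi}\rangle$ after which the relevant $(+^\cdot_{\cdot,\sqsubset})$-property holds for the whole tail, \emph{simultaneously} for $\subseteq^*$, $\pitchfork$ (or the random block) and $\eqcirc/<^*$, while still catching all names — this requires that the "bad" iterands for each property occur only on a $<\mu_i$-initial segment relative to each reflection point, which is a delicate but standard arrangement since $\mu_1\le\mu_2\le\mu_3$ and all three are regular. Once that is set up, Theorem~\ref{preservPlus}, Lemma~\ref{PreserTriangle} and Lemma~\ref{TriangleImpl} assemble into the three upper bounds, completing the proof.
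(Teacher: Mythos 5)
There are genuine gaps, and they are structural rather than cosmetic. First, you put the \emph{full} amoeba algebra $\Aor$ cofinally often into a single length-$\lambda$ iteration. The amoeba generic adds a slalom localizing (equivalently, a null set covering) everything from the previous stage, so it destroys every candidate $\subseteq^*$-unbounded family; in particular $\Aor$ fails $(+_{\cdot,\subseteq^*})$, so Theorem~\ref{preservPlus} cannot be applied to any tail containing it, and with full amoebas cofinally you in fact force $\add(\Nwf)\geq\cf(\lambda)\geq\mu_3$, contradicting $\add(\Nwf)=\mu_1$ whenever $\mu_1<\cf(\lambda)$. Your closing remark that the ``bad'' iterands must sit on a $<\mu_i$-initial segment is exactly right, but it is incompatible with your own design: full $\Aor$ must be confined to an initial block (of length $\mu_1$), and afterwards only subalgebras of $\Aor$ of size $<\mu_1$ may appear. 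The same objection applies to using full $\Eor$ cofinally: the $\Eor$-generic is eventually different from all earlier reals, so it kills every $\eqcirc$-unbounded family, $\Eor$ fails $(+_{\cdot,\eqcirc})$, and you push $\non(\Mwf)$ up to $\cf(\lambda)$ rather than pinning it at $\mu_3$. Second, and independently, your mechanism for $\bfrak\geq\mu_3$ does not work: eventually different reals are not dominating (Lemma~\ref{EvDiffPlus} says precisely that $\Eor$ \emph{preserves} $<^*$-unbounded families), so ``each $<\mu_3$-family is made bounded by a later eventually different real'' is false. You need Hechler forcing $\Dor$, which is absent from your list of iterands: full $\Dor$ along a block of length $\mu_3$, and small subalgebras of $\Dor$ (of size $<\mu_3$, which are still $\sigma$-centered, hence fine for $(+^{\mu_1}_{\cdot,\subseteq^*})$ and $(+^{\mu_2}_{\cdot,\pitchfork})$) cofinally in the final block.

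The paper's proof resolves exactly these tensions by a four-block composition rather than one homogeneous iteration: first a $\mu_1$-length fsi of full $\Aor$ (fixing $A=\omega^\omega\cap V^1$ with $(\blacktriangle,\subseteq^*,A,\mu_1)$); then a $\mu_2$-length fsi alternating full $\Bor$ with subalgebras of $\Aor$ of size $<\mu_1$ (fixing $B=2^\omega\cap V^2$ with $(\blacktriangle,\pitchfork,B,\mu_2)$, the Cohen reals at limits providing the $\pitchfork$-unbounded reals); then a $\mu_3$-length fsi alternating full $\Dor$ with small subalgebras of $\Aor$ and $\Bor$ (fixing $C$ with $(\blacktriangle,\eqcirc,C,\mu_3)$ and giving $\bfrak\geq\mu_3$); and finally a $\lambda$-length fsi using \emph{only} subalgebras of $\Aor,\Bor,\Dor$ of sizes $<\mu_1,<\mu_2,<\mu_3$ respectively, with bookkeeping. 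In the final block every iterand satisfies $(+^{\mu_1}_{\cdot,\subseteq^*})$, $(+^{\mu_2}_{\cdot,\pitchfork})$ and $(+^{\mu_3}_{\cdot,\eqcirc})$ simultaneously (Lemmas~\ref{smallPlus}, \ref{centeredAddNull}, \ref{centeredFork}, \ref{spamPlus}), so Theorem~\ref{preservPlus}, Lemma~\ref{PreserTriangle} and Lemma~\ref{TriangleImpl} preserve the three frozen witnesses and yield the upper bounds, while the small subalgebras chosen by bookkeeping give the matching lower bounds and the Cohen reals give $\cov(\Mwf)\geq\nu$ for each regular $\nu<\lambda$, which is then preserved to the end (this last step is also needed because $\lambda$ may be singular, a point your sketch glosses over). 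Without this staging --- in particular without the continuum passing through $\mu_1$, $\mu_2$, $\mu_3$ at the ends of the first three blocks --- your witnessing families are never actually produced, so the proposal as written does not go through.
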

\begin{proof}
   Any generic extension $V^1$ of the poset resulting from the $\mu_1$-stage fsi of $\Aor$ satisfies
   $\add(\Nwf)=\cfrak=\mu_1$ and $\mathbf{GCH}_{\mu_1}$, also, $A:=\omega^\omega\cap V^1$ has size $\mu_1$
   and $(\blacktriangle,\subseteq^*,A,\mu_1)$ holds.\\
   In $V^1$ perform a fsi $\langle\Por^1_\alpha,\Qnm^1_\alpha\rangle_{\alpha<\mu_2}$ such that
   \begin{itemize}
      \item for $\alpha$ even, $\Qnm^1_\alpha=\Bnm$ ($\Por^1_\alpha$-name for $\Bor$) and
      \item for $\alpha$ odd $\Qnm^1_\alpha$ is a $\Por^1_\alpha$-name for a subalgebra of $\Aor$ of size $<\mu_1$.
   \end{itemize}
   By a book-keeping process, we make sure that all the subalgebras of $\Aor$ (corresponding to the final stage of the iteration)
   of size $<\mu_1$ appear in some stage of the iteration. Lemmas \ref{smallPlus}, \ref{spamPlus} and Theorem \ref{preservPlus} yield that
   all the stages of the iteration satisfy $(+^{\mu_1}_{\cdot,\subseteq^*})$. Now, if $V^2$ is a generic extension of this iteration,
   $(\blacktriangle,\subseteq^*,A,\mu_1)$ is preserved, so $\add(\Nwf)\leq\mu_1$ by Lemma \ref{TriangleImpl}. For $\add(\Nwf)\geq\mu_1$ it is enough to
   note that, from any sequence of $<\mu_1$ Borel null sets coded in $V^2$, a subalgebra of $\Aor$ of size $<\mu_1$ can be defined in such a way that
   a generic extension of it adds a Borel null set that covers those $<\mu_1$ Borel sets. From the cofinally many random reals added in the
   iteration, $\cov(\Nwf)=\cfrak=\mu_2$ holds. $\mathbf{GCH}_{\mu_2}$ is easy to get. Put $B:=2^\omega\cap V^2$, so $(\blacktriangle,\pitchfork,
   B,\mu_2)$ holds and $|B|=\mu_2$.\\
   Now, in $V^2$, perform a fsi $\langle\Por^2_\alpha,\Qnm^2_\alpha\rangle_{\alpha<\mu_3}$ such that
   \begin{itemize}
     \item for $\alpha\equiv0\modulo{3}$, $\Qnm^2_\alpha=\Dnm$ ($\Por^2_\alpha$-name for $\Dor$),
     \item for $\alpha\equiv1\modulo{3}$, $\Qnm^2_\alpha$ is a $\Por^2_\alpha$-name for a subalgebra of $\Aor$ of size $<\mu_1$ and
     \item for $\alpha\equiv2\modulo{3}$, $\Qnm^2_\alpha$ is a $\Por^2_\alpha$-name for a subalgebra of $\Bor$ of size $<\mu_2$.
   \end{itemize}
   Like in the previous step, we make sure to use all the subalgebras of $\Aor$ of size $<\mu_1$ and all subalgebras of $\Bor$ of size $<\mu_2$
   in the iteration. As $\Dor$ is $\sigma$-centered, all stages of this iteration satisfy $(+^{\mu_1}_{\cdot,\subseteq^*})$ and
   $(+^{\mu_2}_{\cdot,\pitchfork})$. If $V^3$ is any generic extension of this iteration, $(\blacktriangle,\subseteq^*,A,\mu_1)$
   and $(\blacktriangle,\pitchfork,
   B,\mu_2)$ are preserved, so $\add(\Nwf)\leq\mu_1$ and $\cov(\Nwf)\leq\mu_2$. The same argument as before yields $\add(\Nwf)=\mu_1$ and a
   similar argument can be used to get $\cov(\Nwf)\geq\mu_2$ (from less than $\mu_2$ Borel null sets coded in $V^3$ we can get a subalgebra of
   $\Bor$ of size $<\mu_2$ that adds a real that is not in any of those sets). The $\mu_3$ Hechler reals give us $\bfrak\geq\mu_3$. $\cfrak=\mu_3$
   and $\mathbf{GCH}_{\mu_3}$ are easy to prove. Note that $(\blacktriangle,\eqcirc,C,\mu_3)$ holds for $C:=\omega^\omega\cap V^3$ and $|C|=\mu_3$.\\
   Finally, in $V^3$, perform a fsi $\langle\Por^3_\alpha,\Qnm^3_\alpha\rangle_{\alpha<\lambda}$ such that
   \begin{itemize}
     \item for $\alpha\equiv0\modulo3$, $\Qnm^3_\alpha$ is a $\Por^3_\alpha$-name for a subalgebra of $\Aor$ of size $<\mu_1$,
     \item for $\alpha\equiv1\modulo3$, $\Qnm^3_\alpha$ is a $\Por^3_\alpha$-name for a subalgebra of $\Bor$ of size $<\mu_2$ and
     \item for $\alpha\equiv2\modulo3$, $\Qnm^3_\alpha$ is a $\Por^3_\alpha$-name for a subalgebra of $\Dor$ of size $<\mu_3$.
   \end{itemize}
   We make sure to use all such subalgebras. Every stage of the iteration satisfy $(+^{\mu_1}_{\cdot,\subseteq^*})$,
   $(+^{\mu_2}_{\cdot,\pitchfork})$ and $(+^{\mu_3}_{\cdot,\eqcirc})$. If $V^4$ is a generic extension of this iteration, as $(\blacktriangle,\subseteq^*,A,\mu_1)$, $(\blacktriangle,\pitchfork,B,\mu_2)$ and $(\blacktriangle,\eqcirc,C,\mu_3)$ are preserved, by a similar argument
   as before, we get $\add(\Nwf)=\mu_1$, $\cov(\Nwf)=\mu_2$ and $\mu_3\leq\bfrak\leq\non(\Mwf)\leq\mu_3$. $\cfrak=\lambda$ and
   $\mathbf{GCH}_\lambda$. It remains to prove $\cov(\Mwf)\geq\lambda$ in $V^4$, but this is because, for each regular $\nu$ such that
   $\mu_3<\nu<\lambda$, in the $\nu$-middle generic stage $\nu\leq\cov(\Mwf)$ holds and it is preserved until the final extension of the iteration.
\end{proof}
\begin{theorem}\label{LeftCichonDomNonNLarge}
   In $V$, assume $\cf(\lambda)\geq\mu_3$. Then, there exists a ccc poset that forces $\mathbf{GCH}_\lambda$,
   $\add(\Nwf)=\mu_1$, $\cov(\Nwf)=\mu_2$, $\bfrak=\mu_3$, $\non(\Mwf)=\cov(\Mwf)=\kappa$ and $\dfrak=\non(\Nwf)=\cfrak=\lambda$.
\end{theorem}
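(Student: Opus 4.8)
The plan is to take the model of Theorem~\ref{LeftCichonCovMLarge} as the ground model and then pull $\cov(\Mwf)$ and $\non(\Mwf)$ \emph{down} to $\kappa$ by a further finite support iteration that adds cofinally many eventually different reals, interleaved with the familiar small subalgebras. Concretely, fix a generic extension $W$ by the ccc poset of Theorem~\ref{LeftCichonCovMLarge}, so $W$ satisfies $\mathbf{GCH}_\lambda$, $\add(\Nwf)=\mu_1$, $\cov(\Nwf)=\mu_2$, $\bfrak=\non(\Mwf)=\mu_3$ and $\cov(\Mwf)=\dfrak=\non(\Nwf)=\cfrak=\lambda$; inspecting that proof, $W$ also carries $A,C\subseteq\omega^\omega$ and $B\subseteq2^\omega$ of sizes $\mu_1,\mu_3,\mu_2$ witnessing $(\blacktriangle,\subseteq^*,A,\mu_1)$, $(\blacktriangle,<^*,C,\mu_3)$, $(\blacktriangle,\pitchfork,B,\mu_2)$, and (by Lemma~\ref{InvforPitchfork}, since $\cov(\Mwf)=\lambda$) $\dfrak_\pitchfork\geq\lambda$ holds there. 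Over $W$ I would run a fsi $\langle\Por_\alpha,\Qnm_\alpha\rangle_{\alpha<\kappa}$ in which, cyclically, $\Qnm_\alpha$ is a name for $\Eor$, or for a subalgebra of $\Aor$ of size $<\mu_1$, or of $\Bor$ of size $<\mu_2$, or of $\Dor$ of size $<\mu_3$, with a book-keeping hitting all such subalgebras along the way; the ccc poset for the theorem is the iteration of the poset of Theorem~\ref{LeftCichonCovMLarge} followed by $\Por_\kappa$, and I write $W'$ for the resulting extension.

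For the preservation part: $\Eor$ is $\sigma$-centered, hence satisfies $(+^{\mu_1}_{\cdot,\subseteq^*})$ and $(+^{\mu_2}_{\cdot,\pitchfork})$ (Lemmas~\ref{centeredAddNull} and~\ref{centeredFork}) and $(+_{\Eor,<^*})$ (Lemma~\ref{EvDiffPlus}); the small subalgebras of $\Aor$ and $\Dor$ satisfy all of $(+^{\mu_1}_{\cdot,\subseteq^*})$, $(+^{\mu_2}_{\cdot,\pitchfork})$, $(+^{\mu_3}_{\cdot,<^*})$ by Lemma~\ref{smallPlus} (using $\sigma$-centeredness of $\Dor$ for the first two), and the subalgebras of $\Bor$ satisfy $(+_{\cdot,\subseteq^*})$ (Lemma~\ref{spamPlus}), $(+_{\cdot,<^*})$ (they are $\omega^\omega$-bounding) and $(+^{\mu_2}_{\cdot,\pitchfork})$ (Lemma~\ref{smallPlus}). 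By Theorem~\ref{preservPlus} the whole iteration $\Por_\kappa$ satisfies $(+^{\mu_1}_{\cdot,\subseteq^*})$, $(+^{\mu_2}_{\cdot,\pitchfork})$, $(+^{\mu_3}_{\cdot,<^*})$, so by Lemma~\ref{PreserTriangle} it preserves $(\blacktriangle,\subseteq^*,A,\mu_1)$, $(\blacktriangle,\pitchfork,B,\mu_2)$, $(\blacktriangle,<^*,C,\mu_3)$ and the statements ``$\dfrak\geq\lambda$'' and ``$\dfrak_\pitchfork\geq\lambda$''. Thus in $W'$, Lemma~\ref{TriangleImpl} gives $\add(\Nwf)\leq\mu_1$, $\cov(\Nwf)\leq\mu_2$, $\bfrak\leq\mu_3$; the book-kept subalgebras give the reverse inequalities exactly as in Theorem~\ref{LeftCichonCovMLarge}; $\cfrak=\lambda$ and $\mathbf{GCH}_\lambda$ survive since $\Por_\kappa$ is ccc of size $\leq\lambda$; and from $\dfrak,\non(\Nwf)\leq\cfrak=\lambda$, ``$\dfrak\geq\lambda$'' and $\dfrak_\pitchfork\leq\non(\Nwf)$ we get $\dfrak=\non(\Nwf)=\lambda$.

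It remains to compute $\non(\Mwf)$ and $\cov(\Mwf)$ in $W'$, and here is where the choice of ground model $W$ is essential. Since each copy of $\Eor$ adds a real eventually different from \emph{all} reals of its ground model and $\Eor$ occurs cofinally, any $X\subseteq\omega^\omega$ in $W'$ with $|X|<\kappa$ lies in some $W^{\Por_\gamma}$ with $\gamma<\kappa$ and every $f\in X$ is then eventually different from the next eventually different real; so no family of size $<\kappa$ is $\eqcirc$-unbounded, i.e.\ $\non(\Mwf)=\bfrak_\eqcirc\geq\kappa$. Conversely the iteration adds cofinally many Cohen reals (at the $\omega$-cofinal limits below $\kappa$); letting $E$ be such a family of size $\kappa$, genericity shows that for every $X$ of size $<\kappa$ some $c\in E$ is $\eqcirc$-unbounded over $X$, so $(\blacktriangle,\eqcirc,E,\kappa)$ holds, whence by Lemma~\ref{TriangleImpl} $\non(\Mwf)=\bfrak_\eqcirc\leq\kappa$ and $\cov(\Mwf)=\dfrak_\eqcirc\geq\kappa$. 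Finally, the $F_\sigma$ meager sets $(\eqcirc)^{e}$, as $e$ ranges over the $\kappa$ eventually different reals added, cover every real of $W'$: a real already in $W$ sits inside the first such $(\eqcirc)^{e_0}$ (the first eventually different real is eventually different from every one of the $\lambda$ reals of $W$), and a real appearing at stage $\gamma<\kappa$ is eventually different from the next $e$; this is a covering family of size $\kappa$, so $\cov(\Mwf)\leq\kappa$. Hence $\non(\Mwf)=\cov(\Mwf)=\kappa$, as required, and $\mathbf{GCH}_\lambda$ is clear.

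The point I expect to be the main obstacle is not any single forcing but the bookkeeping/ordering discipline. First, one must do the long block that produces $\cfrak=\dfrak=\non(\Nwf)=\lambda$ \emph{before} the $\Eor$-block, because a nontrivial fsi of length $\lambda$ forces $\cov(\Mwf)\geq\cf(\lambda)$; what allows $\cov(\Mwf)$ to be pulled back to $\kappa$ afterwards is precisely that one copy of $\Eor$ buries \emph{all} reals of the intermediate model—including the $\lambda$ already present—under a single $F_\sigma$ meager set. Second, and most delicately, one must check that $\bfrak$ stays exactly $\mu_3$: $\Eor$ adds unbounded reals, so one needs both that it adds no \emph{dominating} real (Lemma~\ref{EvDiffPlus}, i.e.\ $(+_{\Eor,<^*})$, which keeps the witness $(\blacktriangle,<^*,C,\mu_3)$ alive) and that the book-kept subalgebras of $\Dor$ of size $<\mu_3$ re-establish ``every family of size $<\mu_3$ is dominated'' in $W'$—which is exactly the place where the hypothesis $\cf(\lambda)\geq\mu_3$ is used, to keep the number of such families, hence of dominating events needed, down to $\lambda$ so that the book-keeping can be carried out.
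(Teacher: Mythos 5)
Your overall route is the same as the paper's: start from the model $V^4$ of Theorem \ref{LeftCichonCovMLarge}, then iterate $\kappa$ many times, cycling $\Eor$ with posets related to $\Aor$, $\Bor$, $\Dor$, and use the $(+^{\mu_1}_{\cdot,\subseteq^*})$, $(+^{\mu_2}_{\cdot,\pitchfork})$, $(+^{\mu_3}_{\cdot,<^*})$ properties to keep $\add(\Nwf)$, $\cov(\Nwf)$, $\bfrak$ small and $\dfrak$, $\dfrak_\pitchfork$ (hence $\non(\Nwf)$) large; your computation of $\non(\Mwf)=\cov(\Mwf)=\kappa$ from the cofinally many Cohen and eventually different reals is exactly the intended argument. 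But there is a genuine gap in the step you yourself flag as ``book-keeping'': you force at each stage with a \emph{single} subalgebra of $\Aor$ (resp.\ $\Bor$, $\Dor$) of size $<\mu_1$ (resp.\ $<\mu_2$, $<\mu_3$), relying on a book-keeping over a length-$\kappa$ iteration to ``hit all such subalgebras''. This cannot work when $\lambda>\kappa$. The tasks you must discharge to get $\add(\Nwf)\geq\mu_1$ (and likewise $\cov(\Nwf)\geq\mu_2$, $\bfrak\geq\mu_3$) are indexed by the families of fewer than $\mu_1$ Borel null sets appearing in the intermediate models; already the ground model $V^4$ has $\cfrak=\lambda$, so each stage contributes $\lambda^{<\mu_1}\geq\lambda$ such families, each small subalgebra generic only takes care of the one family it was built from, and the intermediate models themselves do not satisfy $\add(\Nwf)\geq\mu_1$ (the iteration keeps adding Cohen, eventually different and dominating reals), so these tasks cannot be absorbed by absoluteness from earlier stages. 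With only $\kappa$ slots and $\lambda>\kappa$ tasks, no book-keeping function exists; your closing remark that $\cf(\lambda)\geq\mu_3$ ``keeps the number of families down to $\lambda$ so the book-keeping can be carried out'' concedes exactly this mismatch.

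The paper's proof avoids this by changing what is forced at each of the $\kappa$ stages: for $\alpha\equiv1,2,3\pmod4$, $\Qnm_\alpha$ is the finite support \emph{product} of size $\lambda$ of \emph{all} subalgebras of $\Aor$ of size $<\mu_1$ (resp.\ of $\Bor$ of size $<\mu_2$, of $\Dor$ of size $<\mu_3$) existing in the current extension. One such step simultaneously discharges every task present at that stage, and since every $<\mu_i$-sized family in the final model appears at some stage $<\kappa$ (regularity of $\kappa$ plus ccc and finite supports), the cyclic arrangement handles all of them. The preservation side is unharmed because such a product can be written as a fsi of length $\lambda$ of small subalgebras, so Theorem \ref{preservPlus} together with Lemmas \ref{smallPlus}, \ref{centeredAddNull}, \ref{centeredFork} and \ref{spamPlus} still gives the three $(+)$ properties for the whole iteration. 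If you replace your single-subalgebra steps by these size-$\lambda$ products (or restrict your statement to the case $\kappa=\lambda$, where your book-keeping does have enough room), the rest of your argument goes through as written.
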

\begin{proof}
  Continue where we left in the proof of Theorem \ref{LeftCichonCovMLarge}. Note that $(\blacktriangle,<^*,C,\mu_3)$ holds
  because it holds in $V^3$ and it is preserved in $V^4$, as the iteration that generates this extension satisfies $(+^{\mu_3}_{\cdot,<^*})$.
  Now, perform a fsi
   $\langle\Por_\alpha,\Qnm_\alpha\rangle_{\alpha<\kappa}$ such that
   \begin{itemize}
      \item for $\alpha\equiv0\modulo4$, $\Qnm_\alpha=\Enm$ ($\Por_\alpha$-name for $\Eor$),
      \item for $\alpha\equiv1\modulo4$, $\Qnm_\alpha$ is the $\Por_\alpha$-name for the fsp (finite support product) of size $\lambda$
            of \emph{all} the subalgebras of $\Aor$ of size $<\mu_1$ in any $\Por_\alpha$-generic extension of $V^4$,
      \item for $\alpha\equiv2\modulo4$, $\Qnm_\alpha$ is the $\Por_\alpha$-name for fsp of size $\lambda$
            of \emph{all} the subalgebras of $\Bor$ of size $<\mu_2$ in any $\Por_\alpha$-generic extension of $V^4$, and
      \item for $\alpha\equiv3\modulo4$, $\Qnm_\alpha$ is the $\Por_\alpha$-name for fsp of size $\lambda$
            of \emph{all} the subalgebras of $\Dor$ of size $<\mu_2$ in any $\Por_\alpha$-generic extension of $V^4$.
   \end{itemize}
   Note that $\Qnm_\alpha$ in the last three cases can be defined as a fsi of length $\lambda$ of subalgebras of small size. It is easy to note
   that the iteration satisfies $(+^{\mu_1}_{\cdot,\subseteq^*})$, $(+^{\mu_2}_{\cdot,\pitchfork})$ and $(+^{\mu_3}_{\cdot,<^*})$ so,
   in a generic extension $V^5$ of this iteration,
   by the arguments as in the previous theorem, $(\blacktriangle,\subseteq^*,A,\mu_1)$, $(\blacktriangle,\pitchfork,B,\mu_2)$ and
   $(\blacktriangle,<^*,C,\mu_3)$ are preserved, so
   $\add(\Nwf)=\mu_1$, $\cov(\Nwf)=\mu_2$ and $\bfrak=\mu_3$ (the $\geq$'s are obtained likewise using the small subalgebras of the iteration).
   Because of the $\kappa$-many Cohen and eventually different reals added, we get
   $\cov(\Mwf)=\non(\Mwf)=\kappa$. $\dfrak=\non(\Nwf)=\cfrak=\lambda$ because $\dfrak\geq\lambda$, $\non(\Nwf)\geq\lambda$ and
   $\mathbf{GCH}_\lambda$ are preserved in the iteration.
\end{proof}
\begin{theorem}\label{LeftCichonDomLarge}
   In $V$, assume $\cf(\lambda)\geq\mu_2$. Then, there exists a ccc poset that forces $\mathbf{GCH}_\lambda$,
   $\add(\Nwf)=\mu_1$, $\bfrak=\mu_2$, $\cov(\Nwf)=\non(\Mwf)=\cov(\Mwf)=\non(\Nwf)=\kappa$ and $\dfrak=\cfrak=\lambda$.
\end{theorem}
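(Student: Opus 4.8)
The plan is to follow the scheme of the proofs of Theorems \ref{LeftCichonCovMLarge} and \ref{LeftCichonDomNonNLarge}, assembling the model as a composition of four finite support iterations over $V$. First let $V^1$ be a generic extension of the $\mu_1$-stage fsi of $\Aor$, so $\add(\Nwf)=\cfrak=\mu_1$, $\mathbf{GCH}_{\mu_1}$ holds, $A:=\omega^\omega\cap V^1$ has size $\mu_1$ and $(\blacktriangle,\subseteq^*,A,\mu_1)$ holds. Over $V^1$ run a fsi $\langle\Por^1_\alpha,\Qnm^1_\alpha\rangle_{\alpha<\mu_2}$ with $\Qnm^1_\alpha=\Dnm$ for even $\alpha$ and $\Qnm^1_\alpha$ a name for a subalgebra of $\Aor$ of size $<\mu_1$ for odd $\alpha$, book-keeping all such subalgebras of the final $\Aor$; as $\Dor$ is $\sigma$-centered, Lemmas \ref{smallPlus}, \ref{centeredAddNull} and Theorem \ref{preservPlus} yield $(+^{\mu_1}_{\cdot,\subseteq^*})$ at every stage, so in a generic extension $V^2$ the anchor $(\blacktriangle,\subseteq^*,A,\mu_1)$ survives and, with the small amoeba subalgebras, gives $\add(\Nwf)=\mu_1$, the $\mu_2$ Hechler reals give $\bfrak=\mu_2$, $\mathbf{GCH}_{\mu_2}$ and $\cfrak=\mu_2$ hold, and $C:=\omega^\omega\cap V^2$ trivially witnesses $(\blacktriangle,<^*,C,\mu_2)$ with $|C|=\mu_2$. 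Then, over $V^2$, run a fsi $\langle\Por^2_\alpha,\Qnm^2_\alpha\rangle_{\alpha<\lambda}$ whose iterands are names for subalgebras of $\Aor$ of size $<\mu_1$ (for $\alpha\equiv0\modulo 2$) and of $\Dor$ of size $<\mu_2$ (for $\alpha\equiv1\modulo 2$), exhausting them all; all iterands have size $<\mu_2$ and the $\Dor$-subalgebras are $\sigma$-centered, so by Lemmas \ref{smallPlus}, \ref{centeredAddNull}, \ref{preservPlus} the iteration satisfies $(+^{\mu_1}_{\cdot,\subseteq^*})$, $(+^{\mu_2}_{\cdot,<^*})$, and $(+^{\nu}_{\cdot,\eqcirc})$ for every regular $\nu>\mu_2$, whence in a generic extension $V^3$ the two anchors are preserved (so $\add(\Nwf)\le\mu_1$ and $\bfrak\le\mu_2$), the reverse inequalities follow as in \cite{brendle} from the cofinally-many small subalgebras together with $\cf(\lambda)\ge\mu_2$ (which places every set of $<\mu_2$ reals of $V^3$ in a proper initial extension, to be dominated later), and, exactly as in the proof of Theorem \ref{LeftCichonCovMLarge}, $\cov(\Mwf)\ge\nu$ holds and is preserved for every regular $\nu<\lambda$; hence $\cov(\Mwf)=\cfrak=\lambda$, so $\dfrak=\lambda$, and $\mathbf{GCH}_\lambda$ holds.

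For the terminal block, over $V^3$ run a fsi $\langle\Por^3_\alpha,\Qnm^3_\alpha\rangle_{\alpha<\kappa}$ with $\Qnm^3_\alpha=\Enm$ for $\alpha\equiv0\modulo 4$, $\Qnm^3_\alpha=\Bnm$ for $\alpha\equiv1\modulo 4$, a name for a subalgebra of $\Aor$ of size $<\mu_1$ for $\alpha\equiv2\modulo 4$, and a name for a subalgebra of $\Dor$ of size $<\mu_2$ for $\alpha\equiv3\modulo 4$, book-keeping the small subalgebras; let $V^4$ be a generic extension. Here $\Bor$ satisfies $(+_{\Bor,\subseteq^*})$ by Lemma \ref{spamPlus} and $(+_{\Bor,<^*})$ since it is $\omega^\omega$-bounding, $\Eor$ is $\sigma$-centered (so $(+^{\mu_1}_{\Eor,\subseteq^*})$ by Lemma \ref{centeredAddNull}) and satisfies $(+_{\Eor,<^*})$ by Lemma \ref{EvDiffPlus}, and the small subalgebras satisfy $(+^{\mu_1}_{\cdot,\subseteq^*})$ and $(+^{\mu_2}_{\cdot,<^*})$ by Lemmas \ref{smallPlus} and \ref{centeredAddNull}; so by Theorem \ref{preservPlus} the whole iteration has both $(+^{\mu_1}_{\cdot,\subseteq^*})$ and $(+^{\mu_2}_{\cdot,<^*})$. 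Therefore $(\blacktriangle,\subseteq^*,A,\mu_1)$ and $(\blacktriangle,<^*,C,\mu_2)$ survive into $V^4$, giving $\add(\Nwf)=\mu_1$ and $\bfrak=\mu_2$ (the $\ge$'s as in the earlier blocks, using $\cf(\kappa)=\kappa\ge\mu_2$), and by Lemma \ref{PreserTriangle} the statement ``$\dfrak\ge\lambda$'' is preserved, so $\dfrak=\cfrak=\lambda$ (the iterands have size $\le\lambda$ and $\lambda^{\aleph_0}=\lambda$ under $\mathbf{GCH}_\lambda$, which is kept).

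It remains to compute the four middle invariants in $V^4$, and the terminal block was designed precisely so that its $\kappa$ Cohen reals (added at the limit stages), its $\kappa$ eventually different reals and its $\kappa$ random reals, all lying cofinally below $\kappa$, do this. Working with $\eqcirc$ (where $\bfrak_\eqcirc=\non(\Mwf)$ and $\dfrak_\eqcirc=\cov(\Mwf)$): the Cohen reals form a $\eqcirc$-unbounded family and the eventually different reals form a $\eqcirc$-dominating family (any real of $V^4$ lies in some $V_\alpha$ and is eventually different from the next $\Enm$-generic $e_\alpha$, hence belongs to the meager set $(\eqcirc)^{e_\alpha}$), while no set of $<\kappa$ reals is $\eqcirc$-unbounded or $\eqcirc$-dominating, since such a set already lies in some $V_\alpha$ and is then refuted by the next $e_\alpha$, resp. by the next Cohen real; thus $\non(\Mwf)=\cov(\Mwf)=\kappa$, and in particular the terminal block drags $\cov(\Mwf)$ and $\non(\Mwf)$ down from $\lambda$ to $\kappa$. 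Working with $\pitchfork$: a Cohen real is $\pitchfork$-unbounded over the earlier model while a random real $\pitchfork$-dominates, and avoids every null set coded in, every earlier model, so the symmetric argument gives $\bfrak_\pitchfork=\dfrak_\pitchfork=\kappa$; by Lemma \ref{InvforPitchfork} this yields $\cov(\Nwf)\le\kappa$ and $\non(\Nwf)\ge\kappa$, while the cofinally-many random reals give $\cov(\Nwf)\ge\kappa$ and, being non-null (using that a random real makes the ground model null), $\non(\Nwf)\le\kappa$; hence $\cov(\Nwf)=\non(\Nwf)=\kappa$.

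The main difficulty lies not in any single preservation step --- those all reduce to the lemmas of Section \ref{SecPresUnbd} --- but in the order of the blocks and the book-keeping. The length-$\kappa$ block must be terminal: its $\Enm$- and $\Bor$-stages force the upper bounds $\cov(\Mwf)\le\kappa$ and $\non(\Nwf)\le\kappa$, but every finite support iteration of non-trivial forcings adds Cohen reals cofinally, so appending afterwards any iteration of length $>\kappa$ --- such as the length-$\lambda$ block --- would push $\cov(\Mwf)$ back up to that length and destroy the equality. And keeping $\bfrak$ pinned at the small value $\mu_2$ through the two long blocks is the genuinely delicate point: those blocks cannot avoid adding Cohen reals cofinally, and $\aleph_1$ Cohen reals already form a $<^*$-unbounded family, so $\bfrak$ would collapse to $\aleph_1$ unless the small subalgebras of $\Dor$ and $\Aor$ --- which add only \emph{locally} dominating reals yet still satisfy $(+^{\mu_2}_{\cdot,<^*})$ by virtue of their size (Lemma \ref{smallPlus}) --- are book-kept densely enough, using $\cf(\lambda)\ge\mu_2$ and the regularity of $\kappa$, that every set of $<\mu_2$ reals gets dominated at a later stage. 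This is exactly the mechanism of the constructions in \cite{brendle} and of Theorems \ref{LeftCichonCovMLarge} and \ref{LeftCichonDomNonNLarge}, so the only real work is a careful reassembly.
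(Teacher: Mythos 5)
Your plan diverges from the paper in a harmless way (the paper builds the required theorem from the $V^4$ of Theorem~\ref{LeftCichonCovMLarge} with $\mu_3=\mu_2$, which carries a $\Bor$-block and a $\pitchfork$-anchor you deliberately omit; the paper's terminal block uses $\Bor$ alone and reads $\cov(\Mwf)\le\non(\Nwf)$ and $\non(\Mwf)\ge\cov(\Nwf)$ off Cichon's diagram, whereas you throw in $\Eor$ as well --- both are fine). But there is a real gap in the terminal block. You iterate, for $\kappa$ steps, \emph{single} subalgebras of $\Aor$ of size $<\mu_1$ and of $\Dor$ of size $<\mu_2$ ``book-keeping the small subalgebras''. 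At every stage of this block the continuum is already $\lambda$, so the number of small $\Aor$- (resp.\ $\Dor$-) subalgebras to be enumerated is $\lambda^{<\mu_1}=\lambda$ (resp.\ $\lambda^{<\mu_2}=\lambda$), and when $\kappa<\lambda$ you simply cannot list them all in $\kappa$ slots. Consequently your argument for the lower bounds $\add(\Nwf)\ge\mu_1$ and $\bfrak\ge\mu_2$ (``as in the earlier blocks'') does not go through: a family of $<\mu_1$ Borel null sets coded at some stage $\alpha<\kappa$ will in general have no later stage whose iterand is the amoeba subalgebra it generates, and likewise for domination. This is precisely why, in every theorem whose terminal block has length $\kappa<\lambda$ (Theorems~\ref{LeftCichonDomNonNLarge}--\ref{LefCichonCofNLarge}), the paper replaces the single subalgebra at each step by the \emph{finite support product of size $\lambda$ of all} small subalgebras in the current extension; that product is still a fsi of small posets, so all the $(+^{\mu_i}_{\cdot,\sqsubset})$-preservation lemmas apply unchanged, and now every small family coded at stage $\alpha$ is taken care of at the very next relevant stage. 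Replacing your $\alpha\equiv2,3\pmod4$ iterands by the corresponding fsp of \emph{all} small $\Aor$- and $\Dor$-subalgebras closes the gap; everything else in your computation of $\non(\Mwf),\cov(\Mwf),\cov(\Nwf),\non(\Nwf)$ and of $\dfrak=\cfrak=\lambda$ is correct, and your intermediate models $V^1,V^2,V^3$ are sound (for $V^3$ of length $\lambda$ single subalgebras do suffice, since the book-keeping target and the length match).
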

\begin{proof}
   We start with the model $V^4$ of Theorem \ref{LeftCichonCovMLarge} with $\mu_3=\mu_2$. Perform a fsi
   $\langle\Por_\alpha,\Qnm_\alpha\rangle_{\alpha<\kappa}$ such that
   \begin{itemize}
      \item for $\alpha\equiv0\modulo3$, $\Qnm_\alpha=\Bnm$ ($\Por_\alpha$-name for $\Bor$),
      \item for $\alpha\equiv1\modulo3$, $\Qnm_\alpha$ is the $\Por_\alpha$-name for the fsp of size $\lambda$
            of \emph{all} the subalgebras of $\Aor$ of size $<\mu_1$ in any $\Por_\alpha$-generic extension of $V^4$, and
      \item for $\alpha\equiv2\modulo3$, $\Qnm_\alpha$ is the $\Por_\alpha$-name for fsp of size $\lambda$
            of \emph{all} the subalgebras of $\Dor$ of size $<\mu_2$ in any $\Por_\alpha$-generic extension of $V^4$.
   \end{itemize}
\end{proof}
\begin{theorem}\label{LeftCichonNonNLarge}
   In $V$, assume $\cf(\lambda)\geq\mu_2$. Then, there exists a ccc poset that forces $\mathbf{GCH}_\lambda$,
   $\add(\Nwf)=\mu_1$, $\cov(\Nwf)=\mu_2$, $\add(\Mwf)=\cof(\Mwf)=\kappa$ and $\non(\Nwf)=\cfrak=\lambda$.
\end{theorem}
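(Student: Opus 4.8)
The plan is to start with the model $V^4$ of Theorem \ref{LeftCichonCovMLarge} constructed with the parameter ``$\mu_3$'' there set equal to $\mu_2$ (so the hypothesis $\cf(\lambda)\geq\mu_3$ of that theorem is exactly the present hypothesis $\cf(\lambda)\geq\mu_2$). Recall that in $V^4$ we have $\mathbf{GCH}_\lambda$, $\add(\Nwf)=\mu_1$, $\cov(\Nwf)=\mu_2$, $\bfrak=\non(\Mwf)=\mu_2$ and $\cov(\Mwf)=\cfrak=\lambda$; hence, by Cichon's diagram, $\dfrak=\non(\Nwf)=\cof(\Nwf)=\lambda$, so in particular $\dfrak_\pitchfork\geq\lambda$ (Lemma \ref{InvforPitchfork}) and $\dfrak_{\subseteq^*}=\cof(\Nwf)\geq\lambda$, and the properties $(\blacktriangle,\subseteq^*,A,\mu_1)$ and $(\blacktriangle,\pitchfork,B,\mu_2)$ hold with $|A|=\mu_1$, $|B|=\mu_2$. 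Over $V^4$ I would perform a fsi $\langle\Por_\alpha,\Qnm_\alpha\rangle_{\alpha<\kappa}$ \emph{of length $\kappa$} such that
\begin{itemize}
  \item for $\alpha\equiv0\modulo 3$, $\Qnm_\alpha=\Dnm$ ($\Por_\alpha$-name for Hechler forcing $\Dor$);
  \item for $\alpha\equiv1\modulo 3$, $\Qnm_\alpha$ is a $\Por_\alpha$-name for the fsp of size $\lambda$ of \emph{all} subalgebras of $\Aor$ of size $<\mu_1$;
  \item for $\alpha\equiv2\modulo 3$, $\Qnm_\alpha$ is a $\Por_\alpha$-name for the fsp of size $\lambda$ of \emph{all} subalgebras of $\Bor$ of size $<\mu_2$,
\end{itemize}
with book-keeping guaranteeing that every subalgebra of $\Aor$ of size $<\mu_1$ and every subalgebra of $\Bor$ of size $<\mu_2$ appearing along the iteration is used. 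As in \cite{brendle} and in Theorems \ref{LeftCichonDomNonNLarge}--\ref{LeftCichonDomLarge}, each $\Qnm_\alpha$ of the last two kinds can be rewritten as a fsi of length $\lambda$ of subalgebras of small size, so the whole iteration is ccc and has size $\lambda$. The decisive structural point is that the length is kept at $\kappa$ and \emph{not} $\lambda$: a finite-support iteration of length $\lambda$ necessarily adds cofinally many Cohen reals and would force $\cov(\Mwf)\geq\lambda$, whereas $\cov(\Mwf)=\kappa$ is required.

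Let $V^5$ be a generic extension of this iteration. For the ``boundary'' invariants: since $\Dor$ is $\sigma$-centered and all the small subalgebras used have size $<\mu_1<\mu_2\leq\kappa$, Lemmas \ref{smallPlus}, \ref{spamPlus}, \ref{centeredFork} and \ref{centeredAddNull} give that every iterand satisfies $(+^{\mu_1}_{\cdot,\subseteq^*})$ and $(+^{\mu_2}_{\cdot,\pitchfork})$, so by Theorem \ref{preservPlus} the whole iteration does too. Hence, by Lemma \ref{PreserTriangle}, $(\blacktriangle,\subseteq^*,A,\mu_1)$ and $(\blacktriangle,\pitchfork,B,\mu_2)$ are preserved --- giving $\add(\Nwf)\leq\mu_1$ and, via Lemma \ref{InvforPitchfork}, $\cov(\Nwf)\leq\bfrak_\pitchfork\leq\mu_2$ --- and (applying Lemma \ref{PreserTriangle} again with the same $(+)$-properties, now in the r\^ole of preserving ``$\dfrak_\sqsubset\geq\lambda$'') the statements $\dfrak_{\subseteq^*}\geq\lambda$ and $\dfrak_\pitchfork\geq\lambda$ are preserved, giving $\cof(\Nwf)=\dfrak_{\subseteq^*}\geq\lambda$ and $\non(\Nwf)\geq\dfrak_\pitchfork\geq\lambda$ by Lemma \ref{InvforPitchfork}. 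Since the iteration has size $\lambda$ over a model of $\mathbf{GCH}_\lambda$ with $\cf(\lambda)>\aleph_0$, we get $\cfrak=\lambda$ and $\mathbf{GCH}_\lambda$, and therefore $\non(\Nwf)=\cof(\Nwf)=\cfrak=\lambda$. The reverse bounds $\add(\Nwf)\geq\mu_1$ and $\cov(\Nwf)\geq\mu_2$ are obtained exactly as in Theorem \ref{LeftCichonCovMLarge}: any family of fewer than $\mu_1$ (resp.\ $\mu_2$) Borel null sets coded in $V^5$ is, by the regularity of $\kappa$, already coded at some stage $\beta<\kappa$, and the fsp used at the next stage $\equiv1$ (resp.\ $\equiv2$) mod $3$ contains the subalgebra of $\Aor$ (resp.\ $\Bor$) generated by it, which adds a null set covering it (resp.\ a real avoiding all of them).

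It remains to show that the invariants around category all become $\kappa$. Because $\cfrak^{V^4}=\lambda$, each Hechler real $d_\alpha$ added at a stage $\alpha\equiv0\modulo 3$ dominates $\omega^\omega\cap V^4[G_\alpha]$, a set of size $\lambda$ that contains every real appearing before stage $\alpha$; by the regularity of $\kappa$ every real of $V^5$ lies in some $V^4[G_\alpha]$, so $\{d_\alpha:\alpha\equiv0\pmod 3\}$ is a dominating family of size $\kappa$ --- and, since domination implies eventual difference, also an $\eqcirc$-dominating family --- so $\dfrak\leq\kappa$ and $\cov(\Mwf)=\dfrak_\eqcirc\leq\kappa$; moreover every family of size $<\kappa$ lies, again by regularity of $\kappa$, in some $V^4[G_\beta]$ and is hence $<^*$- and $\eqcirc$-bounded by a later $d_\alpha$, so $\bfrak\geq\kappa$ and $\non(\Mwf)=\bfrak_\eqcirc\geq\kappa$. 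On the other hand the iteration, being a fsi of length $\kappa$ of nontrivial ccc forcings, adds a Cohen real over $V^4[G_\beta]$ cofinally often in $\kappa$, and these $\kappa$ reals form an unbounded and non-meager family and ``infinitely often equal'' every $<\kappa$-sized set, witnessing $\bfrak\leq\kappa$, $\non(\Mwf)\leq\kappa$, $\dfrak\geq\kappa$ and $\cov(\Mwf)\geq\kappa$. Thus $\bfrak=\dfrak=\cov(\Mwf)=\non(\Mwf)=\kappa$, i.e.\ $\add(\Mwf)=\cof(\Mwf)=\kappa$, which completes the verification.

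The step I expect to demand the most care is the simultaneous handling of $\dfrak$ and of $\non(\Nwf),\cof(\Nwf)$: the Hechler factors must collapse $\dfrak$ from $\lambda$ down to $\kappa$ (which works precisely because $\cfrak=\lambda$ already holds in $V^4$, so one Hechler real dominates $\lambda$-many ground reals), while at the same time one needs $\non(\Nwf)=\cof(\Nwf)=\lambda$ to be \emph{preserved}; this hinges on $\Dor$ being $\sigma$-centered so that $(+^{\mu_2}_{\Dor,\pitchfork})$ and $(+^{\mu_1}_{\Dor,\subseteq^*})$ hold (Lemmas \ref{centeredFork}, \ref{centeredAddNull}), and on the small subalgebras being small enough for Lemmas \ref{smallPlus} and \ref{spamPlus} to apply. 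One must also keep the iteration length equal to $\kappa$ rather than $\lambda$, so that the Cohen reals inherent to a long finite-support iteration do not push $\cov(\Mwf)$ and $\dfrak$ back up to $\lambda$.
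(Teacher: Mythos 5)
Your proposal is correct and uses exactly the construction the paper gives (a finite-support iteration of length $\kappa$ over the $V^4$ of Theorem \ref{LeftCichonCovMLarge} with $\mu_3=\mu_2$, alternating Hechler forcing with finite-support products of small subalgebras of $\Aor$ and of $\Bor$). The paper's proof consists only of the construction; your verification of the resulting invariants matches what is implicitly appealed to there and in the sibling Theorems \ref{LeftCichonDomLarge} and \ref{LefCichonCofNLarge}.
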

\begin{proof}
   Start with the model $V^4$ of the proof of Theorem \ref{LeftCichonCovMLarge} with $\mu_3=\mu_2$. Perform a fsi
   $\langle\Por_\alpha,\Qnm_\alpha\rangle_{\alpha<\kappa}$ such that
   \begin{itemize}
      \item for $\alpha\equiv0\modulo3$, $\Qnm_\alpha=\Dnm$ ($\Por_\alpha$-name for $\Dor$),
      \item for $\alpha\equiv1\modulo3$, $\Qnm_\alpha$ is the $\Por_\alpha$-name for the fsp of size $\lambda$
            of \emph{all} the subalgebras of $\Aor$ of size $<\mu_1$ in any $\Por_\alpha$-generic extension of $V^4$, and
      \item for $\alpha\equiv2\modulo3$, $\Qnm_\alpha$ is the $\Por_\alpha$-name for fsp of size $\lambda$
            of \emph{all} the subalgebras of $\Bor$ of size $<\mu_2$ in any $\Por_\alpha$-generic extension of $V^4$.
   \end{itemize}
\end{proof}
\begin{theorem}\label{LefCichonCofNLarge}
   In $V$, assume $\cf(\lambda)\geq\mu_1$. Then, there exists a ccc poset that forces $\mathbf{GCH}_\lambda$,
   $\add(\Nwf)=\mu_1$, $\cov(\Nwf)=\add(\Mwf)=\cof(\Mwf)=\non(\Nwf)=\kappa$ and $\cof(\Nwf)=\cfrak=\lambda$.
\end{theorem}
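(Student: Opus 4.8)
The plan is to imitate the proofs of Theorems \ref{LeftCichonDomLarge} and \ref{LeftCichonNonNLarge}. First I would let $V^4$ be the model produced by the construction of Theorem \ref{LeftCichonCovMLarge} run with its three small parameters all equal to $\mu_1$ and with the cardinal called ``$\lambda$'' there replaced by $\kappa$; then $V^4\models\mathbf{GCH}_\kappa$, $\add(\Nwf)=\cov(\Nwf)=\bfrak=\non(\Mwf)=\add(\Mwf)=\mu_1$ and, by the ZFC inequalities of Cichon's diagram together with the lower bounds secured inside that construction, $\cov(\Mwf)=\dfrak=\non(\Nwf)=\cof(\Mwf)=\cof(\Nwf)=\cfrak=\kappa$. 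I keep from the construction of $V^4$ that $(\blacktriangle,\subseteq^*,A,\mu_1)$ holds there with $|A|=\mu_1$, and that the book-keeping permits one to name, over any generic extension, a subalgebra of $\Aor$ of size $<\mu_1$ covering any prescribed family of fewer than $\mu_1$ Borel null sets.

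Next, in $V^4$, I would carry out a fsi $\langle\Por_\alpha,\Qnm_\alpha\rangle_{\alpha<\kappa}$ of ccc posets with $\Qnm_\alpha$ a name for $\Dor$ when $\alpha\equiv 0\pmod 3$, a name for $\Bor$ when $\alpha\equiv 1\pmod 3$, and, when $\alpha\equiv 2\pmod 3$, a name for the fsp of size $\lambda$ of all subalgebras of $\Aor$ of size $<\mu_1$ in the corresponding generic extension of $V^4$, the book-keeping making sure that in the final model $V^5$ every such subalgebra is used. As in the earlier theorems, each iterand of the last kind rewrites as a fsi of length $\lambda$ of small subalgebras, so $\cfrak=\lambda$ and (routinely) $\mathbf{GCH}_\lambda$ hold in $V^5$. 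The intended roles: the $\kappa$ Hechler reals raise $\bfrak,\dfrak$ to $\kappa$ and, via their Cohen factors, raise $\cov(\Mwf),\non(\Mwf)$ to $\kappa$; the $\kappa$ random reals raise $\cov(\Nwf)$ to $\kappa$; the $\lambda$-many amoeba reals keep $\add(\Nwf)=\mu_1$ and are cofinal in $\Nwf$, pushing $\cof(\Nwf)$ up to $\lambda$. The bound $\add(\Nwf)\leq\mu_1$ is obtained as in the earlier theorems: every iterand satisfies $(+^{\mu_1}_{\cdot,\subseteq^*})$ ($\Dor$ being $\sigma$-centered by Lemma \ref{centeredAddNull}, $\Bor$ by Lemma \ref{spamPlus}, the $\Aor$-subalgebras by Lemma \ref{smallPlus}, and their fsp by Theorem \ref{preservPlus}), so by Theorem \ref{preservPlus} the whole iteration does; hence $(\blacktriangle,\subseteq^*,A,\mu_1)$ is preserved by Lemma \ref{PreserTriangle} and $\add(\Nwf)\leq\mu_1$ follows from Lemma \ref{TriangleImpl}, while $\add(\Nwf)\geq\mu_1$ uses the book-kept $\Aor$-subalgebras exactly as in Theorem \ref{LeftCichonCovMLarge}.

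The step I expect to be the main obstacle is forcing $\cof(\Nwf)=\lambda$ while keeping $\cof(\Mwf)=\non(\Nwf)=\kappa<\lambda$: unlike in Theorem \ref{LeftCichonCovMLarge}, where $\cof(\Nwf)=\lambda$ came for free from $\non(\Nwf)=\lambda$, here one must show that the $\lambda$-many amoeba null sets actually form a cofinal subfamily of $\Nwf$ (so that $\cof(\Nwf)\geq\lambda$), which calls for a careful book-keeping guaranteeing that every null set of $V^5$ is covered by a later amoeba null set while no null set covers $\lambda$-many of them; it is here that the hypothesis $\cf(\lambda)\geq\mu_1$ should enter. The upper bounds on the ``middle'' invariants are a secondary difficulty, addressed thus: since $\kappa$ is regular uncountable, every real of $V^5$ — in particular every Cohen real produced at a limit of cofinality $\omega$ inside an amoeba factor — already appears at a proper initial segment of the length-$\kappa$ iteration, so the $\kappa$ Hechler reals form simultaneously a $<^*$-unbounded, a $<^*$-dominating and a $\eqcirc$-dominating family, their Cohen parts a $\eqcirc$-unbounded and a $\pitchfork$-unbounded family, and the $\kappa$ random reals a non-null set and a $\pitchfork$-dominating family; by the characterizations of Subsections \ref{SubsecNonMeag}, \ref{SubsecUnbd}, \ref{SubsecCovNull} and by Lemma \ref{InvforPitchfork} (using $\cov(\Mwf)=\dfrak_{\eqcirc}$) these cap $\bfrak,\dfrak,\cov(\Mwf),\non(\Mwf),\cov(\Nwf),\non(\Nwf)$ at $\kappa$, the reverse inequalities being immediate. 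Modulo these preservation-style verifications the computation reduces to the ones already carried out for Theorems \ref{LeftCichonCovMLarge}--\ref{LeftCichonNonNLarge}.
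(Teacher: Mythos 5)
Your iteration scheme (length $\kappa$, alternating $\Dor$, $\Bor$, and the fsp of size $\lambda$ of all subalgebras of $\Aor$ of size $<\mu_1$) is exactly the paper's, but your choice of ground model breaks the proof at the one point you yourself flag as the main obstacle. The paper starts the length-$\kappa$ iteration over the model $V^4$ of Theorem \ref{LeftCichonCovMLarge} with $\mu_2=\mu_3=\mu_1$ and the \emph{same} $\lambda$, so that already in the ground model $\cov(\Mwf)=\cfrak=\lambda$ and hence $\cof(\Nwf)=\dfrak_{\subseteq^*}=\lambda$. Then $\cof(\Nwf)\geq\lambda$ in the final model costs nothing: the iterands satisfy $(+^{\mu_1}_{\cdot,\subseteq^*})$ ($\Dor$ by Lemma \ref{centeredAddNull}, $\Bor$ by Lemma \ref{spamPlus}, the small subalgebras by Lemma \ref{smallPlus}, and the whole iteration by Theorem \ref{preservPlus}), and Lemma \ref{PreserTriangle} preserves not only $(\blacktriangle,\subseteq^*,A,\mu_1)$ but also the statement \lcom$\dfrak_{\subseteq^*}\geq\lambda$\rcom\ from the ground model; the length-$\kappa$ tail of Hechler and random reals then pulls $\bfrak,\dfrak,\cov(\Mwf),\non(\Mwf),\cov(\Nwf),\non(\Nwf)$ down to $\kappa$. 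By instead running Theorem \ref{LeftCichonCovMLarge} with $\lambda$ replaced by $\kappa$, your $V^4$ has $\cof(\Nwf)=\cfrak=\kappa$, so this preservation mechanism is unavailable and you must show that the iteration \emph{forces} $\cof(\Nwf)\geq\lambda$ from scratch.

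That is a genuine gap, not a routine verification. Your sketch — the amoeba null sets are cofinal and no null set of $V^5$ covers $\lambda$-many of them, then pigeonhole using $\cf(\lambda)\geq\mu_1$ — is not carried out for its second half (nothing in the construction obviously prevents a single later null set from covering $\lambda$-many of the small-subalgebra generic null sets added at one earlier stage, since these generics are only generic over their own $<\mu_1$-sized subalgebras), and even granted both halves the pigeonhole only rules out cofinal families of size $<\cf(\lambda)$: the hypothesis allows $\lambda$ singular with $\cf(\lambda)\leq\kappa$, in which case a putative cofinal family of size $\kappa<\lambda$ is not excluded. Note also that $\cf(\lambda)\geq\mu_1$ is used elsewhere, namely as the hypothesis $\cf(\lambda)\geq\mu_3$ of Theorem \ref{LeftCichonCovMLarge} and to guarantee (under $\mathbf{GCH}_\lambda$) that there are only $\lambda^{<\mu_1}=\lambda$ many subalgebras of $\Aor$ of size $<\mu_1$, not in a pigeonhole of the kind you describe. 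The repair is simply to keep the paper's ground model: take $V^4$ for the given $\lambda$ with $\mu_2=\mu_3=\mu_1$, after which your computation of all the other invariants (including $\add(\Nwf)=\mu_1$ via the book-kept small amoeba subalgebras and Lemmas \ref{PreserTriangle} and \ref{TriangleImpl}) goes through as you wrote it.
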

\begin{proof}
   Start with the model $V^4$ of the proof of Theorem \ref{LeftCichonCovMLarge} with $\mu_2=\mu_3=\mu_1$. Perform a fsi
   $\langle\Por_\alpha,\Qnm_\alpha\rangle_{\alpha<\kappa}$ such that
   \begin{itemize}
      \item for $\alpha\equiv0\modulo3$, $\Qnm_\alpha=\Dnm$ ($\Por_\alpha$-name for $\Dor$),
      \item for $\alpha\equiv1\modulo3$, $\Qnm_\alpha=\Bnm$ ($\Por_\alpha$-name for $\Bor$), and
      \item for $\alpha\equiv2\modulo3$, $\Qnm_\alpha$ is the $\Por_\alpha$-name for the fsp of size $\lambda$
            of \emph{all} the subalgebras of $\Aor$ of size $<\mu_1$ in any $\Por_\alpha$-generic extension of $V^4$.
   \end{itemize}
\end{proof}

\section{Preservation of $\sqsubset$-unbounded reals}\label{SecPresUnbdg}

Notions and results of this section are fundamental to the construction of matrix iterations in section \ref{SecMatrixit} and for the
construction of the models for our main results in section \ref{SecModelRight}.\\
Recall Context \ref{ContextUnbd}. Throughout this section, fix $M\subseteq N$ models of $\thzfc$ and $c\in\omega^\omega\cap N$ a
$\sqsubset$-unbounded real over $M$.
\begin{definition}\label{DefCompSubordM}
   Given $\Por\in M$ and $\Qor$ posets, we say that \emph{$\Por$ is a complete suborder of $\Qor$ with respect to $M$}, denoted by
   $\Por\preceq_M\Qor$, if $\Por\subseteq\Qor$ and all maximal antichains of $\Por$ in $M$ are maximal antichains of $\Qor$.
\end{definition}
The main consequence of this definition is that, whenever $\Por\in M$ and $\Qor\in N$ are posets such that $\Por\preceq_M\Qor$ then, whenever
$G$ is $\Qor$-generic over $N$, $\Por\cap G$ is a $\Por$-generic set over $M$. Here, we are interested in the case where the real $c$ can be
preserved to be $\sqsubset$-unbounded over $M[G\cap\Por]$.
\begin{definition}\label{DefPresUnbdg}
   Assume $\Por\in M$ and $\Qor\in N$ posets such that $\Por\preceq_M\Qor$. We say that the property $(\star,\Por,\Qor,M,N,\sqsubset,c)$
   holds iff, for every $\dot{h}\in M$ $\Por$-name for a real in $\omega^\omega$, $\Vdash_{\Qor,N}c\not\sqsubset\dot{h}$.
   This is equivalent to saying that $\Vdash_{\Qor,N}$\lcom$c$ is $\sqsubset$-unbounded over $M^{\Por}$\rcom, that is,
   $c$ is $\sqsubset$-unbounded over $M[G\cap\Por]$ for every $G$ $\Qor$-generic over $N$.
\end{definition}
The last two definitions are important notions introduced in \cite{blsh} and \cite{BF} for the preservation of unbounded reals and the
construction of matrix iterations.\\
In relation with the preservation property of Definition \ref{DefPlusProp}, we have the following.
\begin{theorem}\label{suslinStar}
   Let $\Por$ be a Suslin ccc forcing notion with parameters in $M$. If $(+_{\Por,\sqsubset})$ holds in $M$,
   then $(\star,\Por^M,\Por^N,M,N,\sqsubset,c)$ holds.
\end{theorem}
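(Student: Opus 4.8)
The plan is to unfold both sides of the claimed $(\star)$-property and reduce the statement, working inside $M$, to the hypothesis $(+_{\Por,\sqsubset})$. Fix a $\Por^M$-name $\dot h\in M$ for a real in $\omega^\omega$; we must show $\Vdash_{\Por^N,N}c\not\sqsubset\dot h$. Since $\Por$ is Suslin ccc with parameters in $M$, the forcing relation for statements about the generic real $\dot h$ is (uniformly) Borel, and the key point is \emph{absoluteness}: the evaluation of $\dot h$ and the relations $\sqsubset_n$ are coded by Borel/arithmetical formulas, so they are absolute between $M$ and $N$. Concretely, for $p\in\Por^M$, $n,m<\omega$ and $s\in\omega^{<\omega}$, the relation ``$p\Vdash\dot h\frestr m= s$'' is $\Delta_1$ in the code of $\Por$ and of $\dot h$, hence the same in $M$ and in $N$; likewise ``$[s]\cap(\sqsubset_m)_c=\vacio$'' is arithmetical in $c$, and $c\in N$.

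Next I would invoke the hypothesis. In $M$, $(+_{\Por,\sqsubset})$ gives a set $Y\in M$, $Y\subseteq\omega^\omega$, $|Y|<\aleph_1$ (so $Y$ is countable in $M$, hence really countable), such that for every $f\in\omega^\omega$ that is $\sqsubset$-unbounded over $Y$ we have $\Vdash_{\Por^M,M}f\not\sqsubset\dot h$. Now $c$ is $\sqsubset$-unbounded over $M$ by our standing assumption, and $Y\in M$ with $Y$ countable, so $Y\subseteq M\cap\omega^\omega$; therefore $c$ is $\sqsubset$-unbounded over $Y$. By the defining property of $Y$, \emph{in $M$} we have $\Vdash_{\Por^M,M}c\not\sqsubset\dot h$; that is, for every $p\in\Por^M$ and every $m<\omega$ there is $q\leq p$ in $\Por^M$ with $q\Vdash_{\Por^M,M}c\not\sqsubset_m\dot h$. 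Spelled out via the standard fusion argument as in the proof of Lemma \ref{smallPlus}: for each such $q$ one finds $n<\omega$ and a decreasing sequence below $q$ forcing an initial segment of $\dot h$ long enough that $[\dot h\frestr n]\cap(\sqsubset_m)_c=\vacio$.

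Finally I transfer this to $N$. Fix $p\in\Por^N$ and $m<\omega$; I must find $q\leq p$ in $\Por^N$ with $q\Vdash_{\Por^N,N}c\not\sqsubset_m\dot h$. Since $\Por\preceq_M\Por^N$ and $\Por$ is Suslin, $\Por^N$ is ccc in $N$ and the maximal antichains of $\Por^M$ lying in $M$ remain maximal in $\Por^N$; combined with the Borel absoluteness above, every statement of the form ``$p'\Vdash\dot h\frestr n=s$'' that holds in $M$ for $p'\in\Por^M$ still holds computed in $N$, and conversely below any $p\in\Por^N$ one can refine into $\Por^M$ along a maximal antichain of $\Por^M$ (in $M$). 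Thus, given $p\in\Por^N$, pick $p'\in\Por^M$ compatible with $p$ witnessed by some $p''\leq p,p'$ in $\Por^N$; apply the conclusion of the previous paragraph in $M$ to $p'$ and $m$ to obtain $q'\leq p'$ in $\Por^M$ and $n<\omega$ with $q'\Vdash_{\Por^M,M}\dot h\frestr n=t$ for some $t$ with $[t]\cap(\sqsubset_m)_c=\vacio$; by absoluteness $q'\Vdash_{\Por^N,N}\dot h\frestr n=t$ as well, and $q'$ is compatible with $p$ below a common $q\leq p$, which then forces $\dot h\notin(\sqsubset_m)_c$, i.e.\ $q\Vdash_{\Por^N,N}c\not\sqsubset_m\dot h$. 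As $p$ and $m$ were arbitrary, $\Vdash_{\Por^N,N}c\not\sqsubset\dot h$, which is exactly $(\star,\Por^M,\Por^N,M,N,\sqsubset,c)$.

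The main obstacle is the absoluteness bookkeeping: one must be careful that ``$\Por$ is Suslin ccc with parameters in $M$'' genuinely yields that the forcing relation restricted to $\Sigma^0_n$ statements about $\dot h$ is absolute between $M$ and $N$, and that ccc-ness of $\Por^N$ (needed so that refining $p\in\Por^N$ into $\Por^M$ along an $M$-antichain makes sense) is available. Both are standard facts about Suslin ccc forcing, and with them in hand the argument is the one above; the rest is the routine fusion/closed-set computation already exhibited in Lemma \ref{smallPlus}.
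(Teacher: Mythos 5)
Your overall plan is the right one (invoke $(+_{\Por,\sqsubset})$ in $M$ to get a countable witness $Y$, observe $c$ is $\sqsubset$-unbounded over $Y$, and transfer to $N$), but the execution has two real problems. First, the sentence ``by the defining property of $Y$, \emph{in $M$} we have $\Vdash_{\Por^M,M}c\not\sqsubset\dot h$'' is a type error: $c\in N\setminus M$ in general, so there is no such statement \emph{in $M$}. The property of $Y$ that holds in $M$ is the \emph{universal} statement $\forall f\,[(\forall g\in Y\ f\not\sqsubset g)\Rightarrow\Vdash f\not\sqsubset\dot h]$, and what you actually need is that this universal statement, as a fact about reals, is upward absolute from $M$ to $N$ so that it can then be evaluated at $c$ \emph{in $N$}. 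This is precisely the content of the paper's Claim \ref{Pi11Unbdg}, which rewrites $\Vdash f\not\sqsubset\dot h$ (using a nice name $\langle W_n\rangle$, $W_n=\{p_{n,m}\}$, and decided initial segments $h_n(m)$) as $\forall p\in\Por\ \forall k\ \exists n,m\ (p\parallel p_{n,m}\wedge[h_n(m)]\cap(\sqsubset_k)_f=\vacio)$; since $\Por$ is Suslin this is $\Pi^1_1$ in the parameters and hence absolute. Your appeal to ``Borel absoluteness'' gestures at this but never pins down what statement is absolute, and that is exactly where the argument has to do work.

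Second, the manual density transfer at the end is broken. You take $p\in\Por^N$, pick $p'\in\Por^M$ compatible with $p$, apply the $M$-side conclusion to $p'$ to get $q'\leq p'$ in $\Por^M$, and then assert ``$q'$ is compatible with $p$ below a common $q\leq p$.'' That does not follow: $q'\leq p'$ and $p\parallel p'$ do not imply $q'\parallel p$. You cannot repair this by replacing $p'$ with a dense set of good $q'$'s either, because that dense set depends on $c$ and so is not an element of $M$, hence $\Por^M\preceq_M\Por^N$ gives no predensity information about it. The fix is to argue as the paper does: from the absolutized $\Pi^1_1$ statement, for your given $p\in\Por^N$ and $k$, pick $n,m$ with $p\parallel p_{n,m}$ (here $W_n$ is a maximal antichain of $\Por^M$ lying in $M$, hence predense in $\Por^N$) and $[h_n(m)]\cap(\sqsubset_k)_c=\vacio$; any $q\leq p,p_{n,m}$ in $\Por^N$ then forces $\dot h\frestr n=h_n(m)$ and hence $c\not\sqsubset_k\dot h$. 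Formulating the forcing statement via compatibility with the antichain $W_n$, rather than via a fusion sequence below a single condition, is what makes the transfer from $M$ to $N$ go through.
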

\begin{proof}
It is clear that $\Por^M\preceq_M\Por^N$.
\begin{clm}\label{Pi11Unbdg}
   Let $\dot{h}\in M$ be a $\Por$-name for a real in $\omega^\omega$ and $Y\in M$
   be a countable set of reals in $\omega^\omega$. Then, the statement
   \[\forall_{f\in\omega^\omega}[(\forall_{g\in Y}(f\not\sqsubset g))\imp\Vdash f\not\sqsubset\dot{h}]\]
   is absolute for $M$.
\end{clm}
\begin{proof}
   Work in $M$: $\dot{h}$ can be given by a sequence $\langle W_n\rangle_{n<\omega}$ of maximal antichains in $\Por$, say
   $W_n=\cj{p_{n,m}}{m<\omega}$, and by a sequence $\langle h_n\rangle_{n<\omega}$ of reals in $(\omega^{<\omega})^\omega$ such that
   $p_{n,m}\Vdash$\lcom$\dot{h}\frestr n=h_n(m)$\rcom\ for all $n,m<\omega$. Also, put $Y=\cj{g_k}{k<\omega}$. Note that $\Vdash f\not\sqsubset\dot{h}$ is
   equivalent to the statement
   \[\forall_{p\in\Por}\forall_{k<\omega}\exists_{n,m<\omega}(p\parallel p_{n,m}\conj [h_n(m)]\cap(\sqsubset_k)_f=\vacio),\]
   where $(\sqsubset_k)_f:=\cj{g\in\omega^\omega}{f\sqsubset_k g}$. Therefore, the statement in this Claim is given by a $\Pi_1^1$ formula, so it is absolute.
\end{proof}
   Now, let $\dot{h}\in M$ be a $\Por$-name for a real in $\omega^\omega$. Work in $M$: let $Y\in M$ be a witness of $(+_{\Por,\sqsubset})$ for
   $\dot{h}$. Now, by Claim \ref{Pi11Unbdg}, in $N$ holds
   \[\forall_{f\in\omega^\omega}[(\forall_{g\in Y}(f\not\sqsubset g))\imp\Vdash f\not\sqsubset\dot{h}].\]
   As $c$ is $\sqsubset$-unbounded over $M$, clearly $\forall_{g\in Y}(c\not\sqsubset g)$, so $\Vdash_{\Por,N}c\not\sqsubset\dot{h}$.
\end{proof}
\begin{lemma}[Brendle and Fischer, {\cite[Lemma 11]{BF}}]\label{fixedStar}
   Let $\Por\in M$ be a forcing notion. If $c\in N$ is a $\sqsubset$-unbounded real over $M$, then $(\star,\Por,\Por,M,N,\sqsubset,c)$ holds.
\end{lemma}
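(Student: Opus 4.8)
The plan is to verify the hypothesis $\Por\preceq_M\Por$ trivially (every poset is a complete suborder of itself with respect to any model, since the antichain condition in Definition \ref{DefCompSubordM} holds vacuously: the same maximal antichains are involved on both sides), and then to unwind Definition \ref{DefPresUnbdg} directly. So the real content is: given a $\Por$-name $\dot h\in M$ for a real in $\omega^\omega$, we must show $\Vdash_{\Por,N} c\not\sqsubset\dot h$. First I would fix such a name $\dot h$, and also fix, working in $N$, an arbitrary condition $p\in\Por$ together with $k<\omega$; by the closed (arithmetical) description of $\sqsubset_k$ it suffices to find $q\leq p$ with $q\Vdash c\not\sqsubset_k\dot h$, i.e.\ $q\Vdash \dot h\notin(\sqsubset_k)_c$ where $(\sqsubset_k)_c=\cj{g\in\omega^\omega}{c\sqsubset_k g}$ is a closed set.

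The key idea is that $\dot h$, being in $M$, is coded by objects in $M$: a sequence $\langle W_n\rangle_{n<\omega}$ of maximal antichains of $\Por$ (which, because $\Por\in M$, can be taken in $M$) together with the stems $\langle h_n(m)\rangle$ deciding $\dot h\frestr n$. Since $c$ is $\sqsubset$-unbounded over $M$ and $\dot h\in M$, inside $M$ there is a real $g^*\in\omega^\omega\cap M$ with $\Vdash_{\Por}^M \dot h\sqsubset g^*$ only if... — more precisely, the natural move is: the statement $p\Vdash_{\Por}\dot h\sqsubset_k c$ cannot hold for every extension, because that would witness, via a real constructed in $M$ from the antichains and stems, that $c$ is $\sqsubset$-bounded by something read off from $\dot h$, contradicting unboundedness over $M$. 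Concretely, I would argue by contradiction: suppose $p\Vdash_N c\sqsubset_k\dot h$ for some $p,k$. Working below $p$, refine to a condition deciding enough of $\dot h$; using the maximal antichains of $\Por$ from $M$, build in $M$ a real $g^*$ such that $[g^*\frestr n]$ is forced by some member of $W_n$ and $c\sqsubset_k g^*$ is forced — but $g^*\in M$, so $c\not\sqsubset g^*$, contradiction. The mechanism is exactly the $\Pi^1_1$-absoluteness phenomenon exploited in the proof of Theorem \ref{suslinStar}, only now with no $(+)$-hypothesis needed because we are not asked to produce a small witnessing set $Y$ inside $M$ — the single real $c$ already does the job.

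The main obstacle, and the point requiring care, is making sure the real $g^*$ witnessing boundedness genuinely lands in $M$: one must only use the antichains $W_n\in M$ and the stems $h_n(m)\in M$, selecting for each $n$ a member of $W_n$ compatible with the fixed generic-side condition $p$ and taking the corresponding stem — but compatibility with $p$ is decided in $N$, not in $M$, so $g^*$ as literally described need not be in $M$. The fix is to phrase the argument as Brendle–Fischer do: rather than constructing $g^*$, observe that the relation "$c\sqsubset_k \dot h$ is forced by $p$" is, after coding $\dot h$ by its antichains-and-stems in $M$, equivalent to an arithmetical statement about $c$ and those codes, hence reflects down to $M$ with $c$ replaced by \emph{some} real of $M$ — and any such real $g$ satisfies $c\not\sqsubset g$, the desired contradiction. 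Once this absoluteness/reflection step is set up correctly, the rest is bookkeeping identical in spirit to the Claim inside the proof of Theorem \ref{suslinStar}.
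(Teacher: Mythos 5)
Your first sketch is in fact the right argument, and the obstacle you then raise against it is not real; the ``fix'' you substitute for it is the actual gap. The worry that ``compatibility with $p$ is decided in $N$, not in $M$'' is unfounded: since $\Por\in M$ and $M$ is a (transitive) model of $\thzfc$, the condition $p$, all of $\Por$, and the name $\dot h$ lie in $M$, compatibility of conditions of $\Por$ is absolute, and the forcing relation of $\Por$ is definable in $M$; so the whole decision procedure can be run \emph{inside} $M$. Concretely, work in $M$ below the given $p$ exactly as in the proof of Lemma \ref{smallPlus}: build a decreasing sequence $\langle q_m\rangle_{m<\omega}$ with $q_0=p$ and a real $g\in\omega^\omega\cap M$ such that $q_m\Vdash\dot h\frestr m=g\frestr m$ (this also repairs the coherence problem in your ``pick a member of each $W_n$ compatible with $p$'' construction, where the stems chosen for different $n$ need not cohere into a single real). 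Since $c$ is $\sqsubset$-unbounded over $M$, $c\not\sqsubset_k g$, and closedness of $(\sqsubset_k)_c$ gives $m$ with $[g\frestr m]\cap(\sqsubset_k)_c=\vacio$; then $q_m\leq p$ forces $c\not\sqsubset_k\dot h$, which is what was wanted. The only point deserving a remark is that a statement forced in the sense of $M$ is forced in the sense of $N$: this holds because maximal antichains of $\Por$ in $M$ remain maximal in $N$ (predensity is absolute), so every filter generic over $N$ is generic over $M$, and the relevant statements about $\dot h[G]$ are arithmetic, hence absolute between $M[G]$ and $N[G]$.

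The replacement argument, by contrast, does not go through for an arbitrary poset. The statement ``$p\Vdash c\sqsubset_k\dot h$'' is an arithmetical statement about real codes only when $\Por$ is a Suslin ccc forcing (countable antichains, conditions coded by reals, compatibility analytic) --- that is exactly the hypothesis of Theorem \ref{suslinStar}, and it is not available here: in Lemma \ref{fixedStar} the poset may be uncountable, non-definable and non-ccc, so $\dot h$ is not coded by a real and no $\Pi^1_1$ formulation exists. Moreover, even where such a coding exists, the step ``hence reflects down to $M$ with $c$ replaced by some real of $M$'' is not a valid inference: $M\subseteq N$ carries no elementarity, and $\Pi^1_1$-absoluteness transfers a fixed statement whose real parameters lie in both models; it gives no way to trade the parameter $c\in N\setminus M$ for a witness in $M$. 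So you should delete the last paragraph's ``fix,'' keep the direct construction of $g^*$ carried out entirely in $M$, and add the antichain/genericity remark above; that yields precisely the Brendle--Fischer proof.
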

\begin{lemma}[{\cite[Lemmas 10 and 13]{BF}}]\label{CompSubordIt}
   Let $\delta$ be an ordinal in $M$, $\Por_{0,\delta}=\langle\Por_{0,\alpha},\Qnm_{0,\alpha}\rangle_{\alpha<\delta}$
     a fsi of posets defined in $M$ and $\Por_{1,\delta}=\langle\Por_{1,\alpha},\Qnm_{1,\alpha}\rangle_{\alpha<\delta}$
     a fsi of posets defined in $N$. Then, $\Por_{0,\delta}\preceq_M\Por_{1,\delta}$ iff, for every $\alpha<\delta$,
     $\Vdash_{\Por_{1,\alpha},N}\Qnm_{0,\alpha}\preceq_{M^{\Por_{0,\alpha}}}\Qnm_{1,\alpha}$.
\end{lemma}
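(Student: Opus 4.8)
The plan is to reduce the whole statement to a \emph{two-step lemma} and then induct on $\delta$. The two-step lemma I would isolate is: if $\Por_0\in M$ and $\Por_1\in N$ are posets with $\Por_0\preceq_M\Por_1$, and $\Qnm_0\in M$, $\Qnm_1\in N$ are names for posets, then $\Por_0\ast\Qnm_0\preceq_M\Por_1\ast\Qnm_1$ holds if and only if $\Vdash_{\Por_1,N}\Qnm_0\preceq_{M^{\Por_0}}\Qnm_1$. Two elementary observations are used throughout. First, since $M\models\thzfc$, every antichain of a poset $\Por\in M$ that lies in $M$ extends, inside $M$, to a maximal one; hence to check $\Por\preceq_M\Qor$ it is enough to show that each maximal antichain $A\in M$ of $\Por$ is predense in $\Qor$, because being an antichain of $\Qor$ is then automatic (any two distinct members of $A$ sit inside some maximal antichain of $\Por$ in $M$, which is required to be an antichain of $\Qor$). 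Second, in any model in which a fsi is computed, each initial segment $\Por_{\cdot,\alpha}$ is a complete suborder of $\Por_{\cdot,\delta}$ there, and every condition of a fsi has finite, hence bounded below $\delta$, support.

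I would prove the two-step lemma as follows. The inclusion $\Por_0\ast\Qnm_0\subseteq\Por_1\ast\Qnm_1$ holds because a $\Por_0$-name is a $\Por_1$-name, $\Por_0\preceq_M\Por_1$ makes $G\cap\Por_0$ a $\Por_0$-generic set over $M$ for every $\Por_1$-generic $G$ over $N$, a $\Por_0$-name is evaluated identically by $G$ and by $G\cap\Por_0$, and $\Vdash_{\Por_1,N}\Qnm_0\preceq_{M^{\Por_0}}\Qnm_1$ applied to the inclusion $\Qnm_0\subseteq\Qnm_1$ returns the value to $\Qnm_1[G]$. For $(\Leftarrow)$, take a maximal antichain $A\in M$ of $\Por_0\ast\Qnm_0$ and let $\dot A\in M$ be the $\Por_0$-name for $\{\dot s[\dot G_0]:(a,\dot s)\in A,\ a\in\dot G_0\}$; a routine two-step-iteration argument carried out in $M$ shows that $\Por_0$ forces $\dot A$ to be a maximal antichain of $\Qnm_0$ lying in $M^{\Por_0}$. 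Fixing $G$ $\Por_1$-generic over $N$ and $G_0=G\cap\Por_0$, the hypothesis then gives that $\dot A[G_0]$ is a maximal antichain of $\Qnm_1[G]$, and a standard name argument --- given $(q,\dot r)\in\Por_1\ast\Qnm_1$, force below $q$, find $(a,\dot s)\in A$ with $a\in G_0$ and $\dot s$ compatible with $\dot r$ in $\Qnm_1[G]$, then reflect to the ground by the forcing theorem --- produces a common extension of $(q,\dot r)$ and $(a,\dot s)$, so $A$ is predense, hence maximal, in $\Por_1\ast\Qnm_1$. For $(\Rightarrow)$ I would run the mirror image: a $\Por_1$-name over $N$ for a maximal antichain of $\Qnm_1$ lying in $M^{\Por_0}$ pulls back to a maximal antichain of $\Por_0\ast\Qnm_0$ in $M$, which $\Por_0\ast\Qnm_0\preceq_M\Por_1\ast\Qnm_1$ transfers, and one pushes the conclusion forward to $\Qnm_1$.

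Granting the two-step lemma, $(\Leftarrow)$ of the theorem follows by induction on $\delta$. The successor case $\delta=\gamma+1$ is the two-step lemma applied to $\Por_{0,\gamma}\ast\Qnm_{0,\gamma}$ and $\Por_{1,\gamma}\ast\Qnm_{1,\gamma}$ using the inductive hypothesis $\Por_{0,\gamma}\preceq_M\Por_{1,\gamma}$. At a limit $\delta$: given a maximal antichain $A\in M$ of $\Por_{0,\delta}$ and $q\in\Por_{1,\delta}$, pick $\gamma<\delta$ bounding the support of $q$, note that $A\frestr\gamma:=\{a\frestr\gamma:a\in A\}\in M$ is predense in $\Por_{0,\gamma}$ (restrict a common extension of $a\in A$ and $p\in\Por_{0,\gamma}$ to $\gamma$), deduce from $\Por_{0,\gamma}\preceq_M\Por_{1,\gamma}$ applied to a maximal antichain of $\Por_{0,\gamma}$ contained in $A\frestr\gamma$ that $A\frestr\gamma$ is predense in $\Por_{1,\gamma}$, find $a\in A$ with $a\frestr\gamma$ compatible with $q$ there, and splice that witness with the tail of $a$ (a legitimate $\Por_{1,\delta}$-condition because $\Por_{0,\delta}\subseteq\Por_{1,\delta}$) to obtain a common extension of $q$ and $a$ in $\Por_{1,\delta}$; that $A$ is an antichain of $\Por_{1,\delta}$ follows from the first observation. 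For $(\Rightarrow)$: from $\Por_{0,\delta}\preceq_M\Por_{1,\delta}$ and the two facts above one gets $\Por_{0,\alpha}\preceq_M\Por_{1,\alpha}$ and $\Por_{0,\alpha+1}\preceq_M\Por_{1,\alpha+1}$ for every $\alpha<\delta$ (a maximal antichain of $\Por_{0,\alpha}$ in $M$ is a maximal antichain of $\Por_{0,\delta}$, hence of $\Por_{1,\delta}$, hence predense and an antichain in $\Por_{1,\alpha}$ after restricting witnesses to $\alpha$), and the $(\Rightarrow)$ half of the two-step lemma at level $\alpha$ then yields $\Vdash_{\Por_{1,\alpha},N}\Qnm_{0,\alpha}\preceq_{M^{\Por_{0,\alpha}}}\Qnm_{1,\alpha}$.

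The main obstacle is the two-step lemma, and within it the bookkeeping with names: regarding $\Por_0\ast\Qnm_0$ as a genuine subset of $\Por_1\ast\Qnm_1$ by reinterpreting $\Por_0$-names as $\Por_1$-names; checking that $\dot A$ above is an element of $M$ and is forced by $\Por_0$ to lie in $M^{\Por_0}$, so that $\Vdash_{\Por_1,N}\Qnm_0\preceq_{M^{\Por_0}}\Qnm_1$ actually applies to it; and moving compatibility witnesses between $N[G]$ and $N$ by the forcing theorem. None of these steps is deep, but they are exactly the points where the interaction between the two models has to be tracked with care.
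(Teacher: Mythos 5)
The paper cites this result from [BF, Lemmas 10 and 13] and supplies no proof of its own, so I assess your sketch on its own terms. Your decomposition into a two-step lemma plus transfinite induction on $\delta$, with the limit stage handled by bounding supports, is the right strategy and is essentially the one used in [BF].

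However, your ``first observation'' is false, and you rely on it at precisely the places where the antichain half of $\preceq_M$ has to be verified. You claim that if every maximal antichain of $\Por$ lying in $M$ is predense in $\Qor$, then it is automatically an antichain of $\Qor$. This fails already for finite posets: take $\Por = \{1, a, b\}$ with $a, b < 1$ incompatible and $\Qor = \Por \cup \{c\}$ with $c < a, b$; then $\{a,b\}$ is predense in $\Qor$ but not an antichain there, and $\Por \not\preceq_M \Qor$. Your parenthetical justification --- that any two distinct members of $A$ lie in a maximal antichain of $\Por$ in $M$ ``which is required to be an antichain of $\Qor$'' --- is circular, since that requirement is exactly what you are trying to establish. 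You invoke the observation at the end of the two-step argument (``predense, hence maximal'') and again explicitly at the limit stage (``that $A$ is an antichain of $\Por_{1,\delta}$ follows from the first observation''), so the gap is real. The repair is not free but uses the same hypotheses as your predensity argument. In the two-step lemma: if $(p,\dot q)\perp(p',\dot q')$ in $\Por_0\ast\Qnm_0$ but some $(r,\dot s)\in\Por_1\ast\Qnm_1$ extends both, choose $G\ni r$ a $\Por_1$-generic over $N$ and set $G_0=G\cap\Por_0$; then $\dot q[G_0]\perp\dot q'[G_0]$ in $\Qnm_0[G_0]$, and extending $\{\dot q[G_0],\dot q'[G_0]\}$ to a maximal antichain of $\Qnm_0[G_0]$ inside $M[G_0]$ and applying $\Vdash_{\Por_1,N}\Qnm_0\preceq_{M^{\Por_0}}\Qnm_1$ shows they remain incompatible in $\Qnm_1[G]$, contradicting $\dot s[G]$. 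At a limit $\delta$: given incompatible $a,a'\in A$, bound their supports by $\gamma<\delta$, extend $\{a,a'\}$ to a maximal antichain of $\Por_{0,\gamma}$ in $M$, and use the inductive hypothesis $\Por_{0,\gamma}\preceq_M\Por_{1,\gamma}$ to get incompatibility in $\Por_{1,\gamma}$ and hence in $\Por_{1,\delta}$. (As a minor point, in your sketch of $(\Rightarrow)$ for the two-step lemma the antichain should be one of $\Qnm_0$, not $\Qnm_1$.)
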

\begin{theorem}[Blass and Shelah, \cite{blsh}, {\cite[Lemma 12]{BF}}]\label{PreservStar}
   With the notation in Lemma \ref{CompSubordIt}, assume that $\Por_{0,\delta}\preceq_M\Por_{1,\delta}$. Then,
   $(\star,\Por_{0,\delta},\Por_{1,\delta},M,N,\sqsubset,c)$ holds iff, for every $\alpha<\delta$,
   \[\Vdash_{\Por_{1,\alpha},N}(\star,\Qnm_{0,\alpha},\Qnm_{1,\alpha},M^{\Por_{0,\alpha}},N^{\Por_{1,\alpha}},\sqsubset,c).\]
\end{theorem}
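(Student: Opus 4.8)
My plan is to prove the two implications separately. The ``only if'' direction is a routine factoring argument: assuming $(\star,\Por_{0,\delta},\Por_{1,\delta},M,N,\sqsubset,c)$ and fixing $\alpha<\delta$, I would take a $\Qnm_{0,\alpha}$-name for a real that is forced by $\Por_{1,\alpha}$ to lie in $M^{\Por_{0,\alpha}}$, pull it back to a $\Por_{0,\alpha}\ast\Qnm_{0,\alpha}$-name in $M$, pad it out to a $\Por_{0,\delta}$-name $\dot g^{\ast}\in M$ whose evaluation does not involve the coordinates above $\alpha$, apply the hypothesis to get $\Vdash_{\Por_{1,\delta},N}c\not\sqsubset\dot g^{\ast}$, and then restrict this forced statement --- which mentions only objects of $N^{\Por_{1,\alpha+1}}$ and involves an absolute forcing relation --- to $\Por_{1,\alpha+1}$, obtaining exactly the required instance of $(\star,\Qnm_{0,\alpha},\Qnm_{1,\alpha},M^{\Por_{0,\alpha}},N^{\Por_{1,\alpha}},\sqsubset,c)$. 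Quantifying over the relevant names finishes this direction.

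The ``if'' direction I would prove by induction on $\delta$. The case $\delta=0$ is just the standing assumption that $c$ is $\sqsubset$-unbounded over $M$. For $\delta=\beta+1$, the induction hypothesis gives $(\star,\Por_{0,\beta},\Por_{1,\beta},M,N,\sqsubset,c)$ and the hypothesis of the theorem at $\beta$ gives $\Vdash_{\Por_{1,\beta},N}(\star,\Qnm_{0,\beta},\Qnm_{1,\beta},M^{\Por_{0,\beta}},N^{\Por_{1,\beta}},\sqsubset,c)$, and these are put together by a two-step lemma: given a $\Por_{0,\delta}$-name $\dot h\in M$ for a real, regard it in $M$ as a $\Por_{0,\beta}$-name for a $\Qnm_{0,\beta}$-name for a real; for $G_1$ a $\Por_{1,\delta}$-generic over $N$, written $G_1=G_{1,\beta}\ast H_1$, the set $G_{0,\beta}:=G_{1,\beta}\cap\Por_{0,\beta}$ is $\Por_{0,\beta}$-generic over $M$ (since $\Por_{0,\beta}\preceq_M\Por_{1,\beta}$) and, by the coordinatewise criterion of Lemma~\ref{CompSubordIt}, $H_0:=H_1\cap\Qnm_{0,\beta}[G_{0,\beta}]$ is $\Qnm_{0,\beta}[G_{0,\beta}]$-generic over $M[G_{0,\beta}]$, so that $\dot h[G_1\cap\Por_{0,\delta}]$ is the evaluation of the inner name at $G_{0,\beta}$ and then at $H_0$; the step property applied inside $N[G_{1,\beta}]$ to that $\Qnm_{0,\beta}[G_{0,\beta}]$-name then yields $c\not\sqsubset\dot h[G_1\cap\Por_{0,\delta}]$ in $N[G_1]$, and $G_1$ was arbitrary.

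The limit case I would split on $\cf(\delta)$. When $\cf(\delta)>\omega$ the iterations being ccc (the matrix-iteration setting) makes any $\Por_{0,\delta}$-name $\dot h\in M$ for a real a $\Por_{0,\alpha}$-name for some $\alpha<\delta$ --- its countably many deciding maximal antichains have bounded support --- so the induction hypothesis at $\alpha$ together with the upward absoluteness of ``$c\not\sqsubset g$'' (a countable conjunction of open conditions in $c,g$, since each $\sqsubset_n$ is closed) finishes it. The case $\cf(\delta)=\omega$ is, as I expect, the main obstacle, since reals genuinely new to stage $\delta$ appear there. Writing $\delta=\sup_n\delta_n$, one still has, in any $\Por_{0,\delta}$-generic $G_0$, that each $h\frestr k$ is determined by the single condition of the $k$-th deciding antichain lying in $G_0$, whose finite support is contained in some $\delta_{n_k}$, so $h\frestr k\in M[G_0\cap\Por_{0,\delta_{n_k}}]$ for an increasing sequence $\langle n_k\rangle_{k<\omega}$, and the induction hypotheses at the $\delta_{n_k}$ keep $c$ $\sqsubset$-unbounded over every $M[G_0\cap\Por_{0,\delta_{n_k}}]$. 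What remains --- the Blass--Shelah argument --- is to rule out, for each $m$, that $\dot h$ is forced into the closed nowhere dense set $(\sqsubset_m)_c=\{g\in\omega^\omega:c\sqsubset_m g\}$: the idea is to build, below an arbitrary condition of $\Por_{1,\delta}$, a decreasing chain of conditions with supports cofinal in $\delta$ deciding longer and longer initial segments of $\dot h$ and to drive those segments out of the tree coding $(\sqsubset_m)_c$, using the full strength of the theorem's hypothesis along the intervening coordinates together with the monotonicity of $\langle\sqsubset_k\rangle_{k<\omega}$ and the closedness of the sections. Getting this escape argument to close --- reconciling the real $h$ that is new at stage $\delta$ with the unboundedness of $c$ over the earlier models --- is the crux; everything else is standard finite-support bookkeeping.
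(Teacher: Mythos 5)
The paper itself offers no proof of this theorem --- it is quoted as Blass--Shelah and as \cite[Lemma 12]{BF} --- so your attempt can only be measured against the standard argument. Your ``only if'' direction and the successor step of the ``if'' direction are essentially right, and your reduction in the case $\cf(\delta)>\omega$ works in the ccc setting in which the theorem is applied (though note the statement, like Lemma \ref{CompSubordIt}, is phrased for arbitrary fsi's, so leaning on ccc there is an extra hypothesis; the correct limit argument needs no case split on cofinality at all).

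The genuine gap is exactly where you locate it, but your proposed way out would not close. Building a decreasing chain of conditions of $\Por_{1,\delta}$ with supports cofinal in $\delta$, deciding longer and longer pieces of $\dot h$, cannot succeed in a finite support iteration: such a chain has no lower bound, so no single condition ever forces the accumulated initial segment out of $(\sqsubset_m)_c$, and density of ``escaping'' conditions is precisely what must be proved. The missing idea (Blass--Shelah) is an \emph{interpretation} of the name over an initial segment rather than a fusion: given $p\in\Por_{1,\delta}$ and $m$, pick $\alpha<\delta$ with the (finite) support of $p$ contained in $\alpha$, and define \emph{in $M$} a $\Por_{0,\alpha}$-name $\dot h'$ for a real together with a $\Por_{0,\alpha}$-name for a decreasing sequence $\langle\dot r_k\rangle_{k<\omega}$ of conditions of the quotient $\Por_{0,\delta}/\Por_{0,\alpha}$ such that $\dot r_k$ forces $\dot h\frestr k=\dot h'\frestr k$. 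Now apply the induction hypothesis $(\star,\Por_{0,\alpha},\Por_{1,\alpha},M,N,\sqsubset,c)$ to the single real $\dot h'$: below $p$ one finds $q_0\in\Por_{1,\alpha}$ and $k$ with $q_0\Vdash_{\Por_{1,\alpha},N}[\dot h'\frestr k]\cap(\sqsubset_m)_c=\vacio$, and then one condition, namely $q_0$ followed by (the name for) $\dot r_k$ --- legitimate in $\Por_{1,\delta}$ because $\Por_{0,\delta}\preceq_M\Por_{1,\delta}$ --- forces $\dot h\frestr k=\dot h'\frestr k$ and hence $c\not\sqsubset_m\dot h$. This yields the required dense set in one finite step; reconciling ``$h$ is new at stage $\delta$'' with the unboundedness of $c$ is done by transferring the escape from $\dot h$ to its interpretation $\dot h'$, which lives in an intermediate model, not by any infinite construction along the iteration. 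Without this step your induction does not go through at limits of countable cofinality, which is the actual content of the theorem.
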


\section{Matrix iterations of ccc posets}\label{SecMatrixit}

Throughout this section, we work in a model $V$ of $\thzfc$. Fix two ordinals $\delta$ and $\gamma$.
\begin{definition}[Blass and Shelah, \cite{blsh} and \cite{BF}]\label{DefMatrixIt}
A \emph{matrix iteration of ccc posets} is given by $\Por_{\delta,\gamma}=\langle\langle\Por_{\alpha,\xi},\Qnm_{\alpha,\xi}
\rangle_{\xi<\gamma}\rangle_{\alpha\leq\delta}$ with the following conditions.
\begin{enumerate}[(1)]
   \item $\Por_{\delta,0}=\langle\Por_{\alpha,0},\Rnm_{\alpha}\rangle_{\alpha<\delta}$ is a fsi of ccc posets.
   \item For all $\alpha\leq\delta$, $\langle\Por_{\alpha,\xi},\Qnm_{\alpha,\xi}
         \rangle_{\xi<\gamma}$ is a fsi of ccc posets
   \item For all $\xi<\gamma$ and $\alpha<\beta\leq\delta$,
         $\Vdash_{\Por_{\beta,\xi}}\Qnm_{\alpha,\xi}\preceq_{V^{\Por_{\alpha,\xi}}}\Qnm_{\beta,\xi}$.
\end{enumerate}
By Lemma \ref{CompSubordIt}, condition (3) is equivalent to saying that $\Por_{\alpha,\gamma}$ is a complete suborder of $\Por_{\beta,\gamma}$
for
every $\alpha<\beta\leq\delta$.\\
In the context of matrix iterations, when $\alpha\leq\delta$, $\xi\leq\gamma$ and $G_{\alpha,\xi}$ is $\Por_{\alpha,\xi}$-generic over $V$, we
denote
$V_{\alpha,\xi}=V[G_{\alpha,\xi}]$. Note that $V_{0,0}=V$.\\
Figure \ref{fig:2} shows the form in which we think of a matrix iteration. The iteration defined in (1) is represented by the leftmost vertical
iteration and, at each $\alpha$-stage of this iteration ($\alpha\leq\delta$), a horizontal iteration is performed as it is represented in (2).
\begin{figure}
\begin{center}
  \includegraphics[scale=0.48]{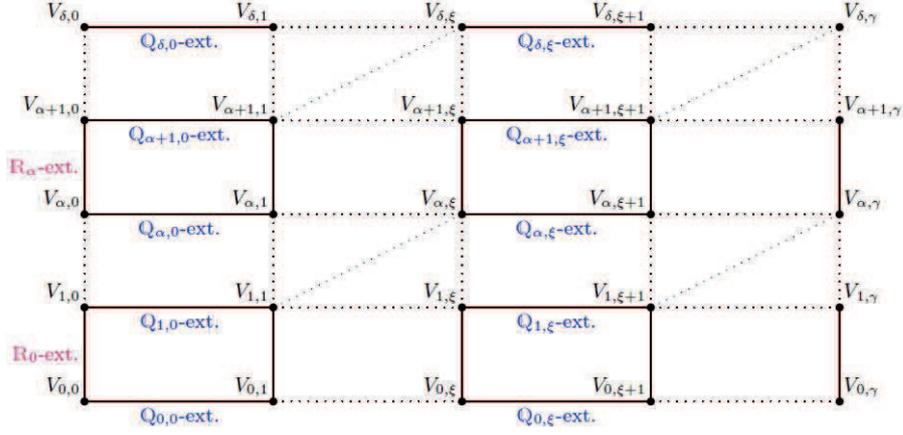}
\caption{Matrix iteration} \label{fig:2}
\end{center}
\end{figure}
\end{definition}
The construction of the matrix iterations for the models in Section \ref{SecModelRight} corresponds to the following particular case, which we
fix from now on.
\begin{context}\label{ContextMatrix}
   Fix a function $\Delta:\gamma\to\delta$ and, for $\xi<\gamma$, let $\Sor_\xi$ be a Suslin ccc poset with parameters in $V$.
   Define the matrix iteration $\Por_{\delta,\gamma}=\langle\langle\Por_{\alpha,\xi},\Qnm_{\alpha,\xi}
   \rangle_{\xi<\gamma}\rangle_{\alpha\leq\delta}$ as follows.
   \begin{enumerate}[(1)]
       \item $\Por_{\delta,0}=\langle\Por_{\alpha,0},\Cnm\rangle_{\alpha<\delta}$ (fsi of Cohen forcing).
       \item For a fixed $\xi<\gamma$, $\Qnm_{\alpha,\xi}$ is defined for all $\alpha\leq\delta$ according to one of the following cases.
   \begin{enumerate}[(i)]
       \item $\Qnm_{\alpha,\xi}=\Snm_\xi$ (as a $\Por_{\alpha,\xi}$-name), or
       \item for a fixed $\Por_{\Delta(\xi),\xi}$-name $\Tnm_\xi$ of a subalgebra of $\Sor_\xi$,
           \[\Qnm_{\alpha,\xi}=\left\{
               \begin{array}{ll}
                   \mathds{1} & \textrm{if $\alpha\leq \Delta(\xi)$,}\\
                   \Tnm_\xi & \textrm{if $\alpha>\Delta(\xi)$.}
               \end{array}
               \right.\]
   \end{enumerate}
   \end{enumerate}
   It is clear that this satisfies the conditions of the Definition \ref{DefMatrixIt}.\\
   From the iteration in (1), for $\alpha<\delta$ let $\dot{c}_\alpha$ be a $\Por_{\alpha+1,0}$-name
   for a Cohen real over $V_{\alpha,0}$. Therefore, from Context \ref{ContextUnbd} it is clear that $\dot{c}_\alpha$ represents a
   $\sqsubset$-unbounded real over $V_{\alpha,0}$ (actually, this is the only place where we use in this paper the second condition of
   Context \ref{ContextUnbd}).
\end{context}
The same argument as in the proof of \cite[Lemma 15]{BF} yields the following.
\begin{theorem}[Brendle and Fischer]\label{realsmatrixbelow}
   Assume that $\delta$ has uncountable cofinality and $\xi\leq\gamma$.
   \begin{enumerate}[(a)]
      \item If $p\in\Por_{\delta,\xi}$ then there exists an $\alpha<\delta$ such that $p\in\Por_{\alpha,\xi}$.
      \item If $\dot{h}$ is a $\Por_{\delta,\xi}$-name for a real, then there exists an $\alpha<\delta$ such that
            $\dot{h}$ is a $\Por_{\alpha,\xi}$-name.
   \end{enumerate}
\end{theorem}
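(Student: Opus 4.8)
The plan is to prove both parts by a simultaneous induction on $\xi \leq \gamma$, reducing the matrix iteration $\Por_{\delta,\xi}$ to a standard finite support iteration along the $\delta$-axis with the usual "reals appear at a bounded stage" argument (which is exactly \cite[Lemma 15]{BF}). The key point I would exploit is that, by Context \ref{ContextMatrix}, each vertical step is itself a finite support iteration: fixing $\xi$, the sequence $\langle \Por_{\alpha,\xi}\rangle_{\alpha\leq\delta}$ is (up to the completeness of suborders from condition (3) of Definition \ref{DefMatrixIt}) the finite support iteration whose $\alpha$-th iterand is $\Qnm_{\alpha,\xi}$. So the standard chain-condition reflection for fsi applies once I know each $\Qnm_{\alpha,\xi}$ is "small enough" to not require cofinally many coordinates.

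First I would prove (a). A condition $p\in\Por_{\delta,\xi}$ has finite support: it is a function with $\dom(p)\subseteq\delta\times\xi$ finite (after the usual identification of a finite-support matrix condition with a finitely-supported partial function), or more precisely $p$ lives in the $\xi$-length fsi $\langle\Por_{\delta,\eta},\Qnm_{\delta,\eta}\rangle_{\eta<\xi}$, so $p$ is determined by finitely many coordinates $\eta_0<\dots<\eta_{k-1}<\xi$ together with $\Por_{\delta,\eta_i}$-names $p(\eta_i)$ for elements of $\Qnm_{\delta,\eta_i}$. By induction on $\xi$ (using the inductive hypothesis at each $\eta_i<\xi$, and the fact that $\delta$ has uncountable cofinality), each such name reflects to some $\Por_{\alpha_i,\eta_i}$-name for an element of $\Qnm_{\alpha_i,\eta_i}$ for some $\alpha_i<\delta$; taking $\alpha=\sup_i\alpha_i<\delta$ (a finite sup, so certainly below $\delta$) and using condition (3) to see $\Qnm_{\alpha,\eta_i}\subseteq\Qnm_{\delta,\eta_i}$ appropriately, we get $p\in\Por_{\alpha,\xi}$. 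The base case $\xi=0$ is (a) for the leftmost fsi of Cohen forcing, which is immediate since Cohen conditions are finite objects.

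Next, (b) follows from (a) by the standard nice-name argument. Given a $\Por_{\delta,\xi}$-name $\dot{h}$ for a real in $\omega^\omega$, replace it by a nice name: for each $n,m<\omega$ pick a maximal antichain $A_{n,m}\subseteq\Por_{\delta,\xi}$ deciding $\dot h(n)=m$. Since $\Por_{\delta,\xi}$ is ccc, each $A_{n,m}$ is countable, so the whole name uses only countably many conditions $\{p_i : i<\omega\}$. By part (a), each $p_i\in\Por_{\alpha_i,\xi}$ for some $\alpha_i<\delta$; since $\cf(\delta)>\omega$, $\alpha:=\sup_i\alpha_i<\delta$. Then all the antichains $A_{n,m}$ lie in $\Por_{\alpha,\xi}$, and because $\Por_{\alpha,\xi}\preceq\Por_{\delta,\xi}$ (condition (3) of Definition \ref{DefMatrixIt}, via Lemma \ref{CompSubordIt}) they remain maximal antichains there, so $\dot h$ is (equivalent to) a $\Por_{\alpha,\xi}$-name.

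The main obstacle I anticipate is purely bookkeeping rather than conceptual: making the reflection in (a) genuinely \emph{simultaneous} across the $\xi$-coordinate, i.e. being careful that when I reflect a coordinate name $p(\eta_i)$, which is itself a $\Por_{\delta,\eta_i}$-name, I am invoking the \emph{inductive hypothesis} for $\eta_i<\xi$ and not circularly re-using the statement at $\xi$. A clean way around this is to run the induction on $\xi$ with the statement "for all $\eta\leq\xi$, (a) and (b) hold at $\eta$" and prove (a) at $\xi$ using (b) at all $\eta<\xi$ (to handle the names for members of $\Qnm_{\delta,\eta_i}$, which in case (ii) of Context \ref{ContextMatrix} may be $\Por_{\Delta(\eta_i),\eta_i}$-names with $\Delta(\eta_i)<\delta$, hence already bounded, and in case (i) are names for elements of the Suslin forcing $\Snm_{\eta_i}$ with parameters in $V$, again unproblematic). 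Once the induction is set up this way, everything reduces to the familiar ccc reflection lemma, exactly as in \cite[Lemma 15]{BF}.
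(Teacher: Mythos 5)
Your proof is correct and is essentially the Brendle--Fischer argument that the paper itself invokes here (the paper gives no proof of its own, only the remark that \lcom the same argument as in the proof of \cite[Lemma 15]{BF} yields the following\rcom). The joint induction on $\xi$ — using (b) at $\eta<\xi$ to reflect the coordinate names $p(\eta)$ (reals, by Suslinness of $\Sor_\eta$ or because $\Tnm_\eta$ is a fixed small $\Por_{\Delta(\eta),\eta}$-name) and then deriving (b) at $\xi$ from (a) via ccc, countable nice names, and $\cf(\delta)>\omega$ — is exactly the standard reflection argument being referenced.
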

When we go through generic extensions of the matrix iteration, for every $\alpha<\delta$ we are interested in preserving the
$\sqsubset$-unboundedness of $\dot{c}_\alpha$ through the horizontal iterations. The following results state conditions that guarantee this.
\begin{theorem}\label{PresvUnbdg}
   Assume that, for every $\xi<\gamma$ such that all $\Qnm_{\alpha,\xi}$ for $\alpha\leq\delta$ are defined as (2)(i) of Context \ref{ContextMatrix},
   $\Vdash_{\Por_{\alpha,\xi}}(+_{\Qnm_{\alpha,\xi},\sqsubset})$. Then, for all $\alpha<\delta$, $\Por_{\alpha+1,\gamma}$ forces that $\dot{c}_\alpha$
   is a $\sqsubset$-unbounded real over $V_{\alpha,\gamma}$.
\end{theorem}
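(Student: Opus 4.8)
The plan is to rewrite the conclusion as a single instance of the $(\star)$-preservation property of Definition \ref{DefPresUnbdg} and then to invoke Theorem \ref{PreservStar}, reducing everything to a short case analysis based on Context \ref{ContextMatrix}(2). Fix $\alpha<\delta$ and set $M=V_{\alpha,0}$, $N=V_{\alpha+1,0}$ and $c=\dot{c}_\alpha$. Since $\Rnm_\alpha=\Cnm$ and $\dot{c}_\alpha$ is the Cohen real adjoined at stage $\alpha$ of the leftmost iteration of Context \ref{ContextMatrix}(1), we have $\Por_{\alpha,0}\preceq\Por_{\alpha+1,0}$, the name $\dot{c}_\alpha$ is a $\Por_{\alpha+1,0}$-name (hence a $\Por_{\alpha+1,\gamma}$-name), and $c$ is a Cohen real over $M$; by the second clause of Context \ref{ContextUnbd}, $c$ is therefore $\sqsubset$-unbounded over $M$. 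Over $M$, the forcing $\Por_{\alpha,\gamma}$ is $\Por_{\alpha,0}$ followed by the finite support iteration $\Por_{\alpha,\gamma}/\Por_{\alpha,0}=\langle\Por_{\alpha,\xi}/\Por_{\alpha,0},\Qnm_{\alpha,\xi}\rangle_{\xi<\gamma}$, and likewise $\Por_{\alpha+1,\gamma}$ is, over $N$, $\Por_{\alpha+1,0}$ followed by $\Por_{\alpha+1,\gamma}/\Por_{\alpha+1,0}$, so that $M^{\Por_{\alpha,\gamma}/\Por_{\alpha,0}}=V_{\alpha,\gamma}$ and $N^{\Por_{\alpha+1,\gamma}/\Por_{\alpha+1,0}}=V_{\alpha+1,\gamma}$. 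As $c$ is added by the first factor, the assertion \lcom$\Por_{\alpha+1,\gamma}$ forces that $\dot{c}_\alpha$ is $\sqsubset$-unbounded over $V_{\alpha,\gamma}$\rcom\ is, by Definition \ref{DefPresUnbdg}, exactly $(\star,\Por_{\alpha,\gamma}/\Por_{\alpha,0},\Por_{\alpha+1,\gamma}/\Por_{\alpha+1,0},M,N,\sqsubset,c)$. Finally, condition (3) of Definition \ref{DefMatrixIt} with $\beta=\alpha+1$ reads $\Vdash_{\Por_{\alpha+1,\xi}}\Qnm_{\alpha,\xi}\preceq_{V^{\Por_{\alpha,\xi}}}\Qnm_{\alpha+1,\xi}$ for all $\xi<\gamma$, which by Lemma \ref{CompSubordIt} is precisely the relation $\Por_{\alpha,\gamma}/\Por_{\alpha,0}\preceq_M\Por_{\alpha+1,\gamma}/\Por_{\alpha+1,0}$ that Theorem \ref{PreservStar} requires.

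By Theorem \ref{PreservStar}, this $(\star)$-statement holds if and only if, for every $\xi<\gamma$, $\Vdash_{\Por_{\alpha+1,\xi},N}(\star,\Qnm_{\alpha,\xi},\Qnm_{\alpha+1,\xi},V_{\alpha,\xi},V_{\alpha+1,\xi},\sqsubset,c)$. I would establish these step statements by induction on $\xi\leq\gamma$. An induction is needed because, to treat stage $\xi$, one must know that $c$ is forced to be $\sqsubset$-unbounded over $V_{\alpha,\xi}$; but the step statements at all $\eta<\xi$ yield, via Theorem \ref{PreservStar} applied to the length-$\xi$ truncations, the $(\star)$-statement for those truncations, which by Definition \ref{DefPresUnbdg} says exactly $\Vdash_{\Por_{\alpha+1,\xi},N}$\lcom$c$ is $\sqsubset$-unbounded over $V_{\alpha,\xi}$\rcom. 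The base case $\xi=0$ holds because $c$ is $\sqsubset$-unbounded over $V_{\alpha,0}=M$.

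For the inductive step I work below $\Por_{\alpha+1,\xi}$ over $N$, that is, in $V_{\alpha+1,\xi}$, where $c$ is $\sqsubset$-unbounded over $V_{\alpha,\xi}$, and I split according to Context \ref{ContextMatrix}(2). In case (i), $\Qnm_{\alpha,\xi}=\Qnm_{\alpha+1,\xi}=\Snm_\xi$, interpreted as the Suslin ccc poset $\Sor_\xi$ computed in $V_{\alpha,\xi}$ and in $V_{\alpha+1,\xi}$ respectively; its parameters lie in $V\subseteq V_{\alpha,\xi}$, and by hypothesis $\Vdash_{\Por_{\alpha,\xi}}(+_{\Qnm_{\alpha,\xi},\sqsubset})$, i.e. $(+_{\Sor_\xi,\sqsubset})$ holds in $V_{\alpha,\xi}$, so Theorem \ref{suslinStar} applied to $V_{\alpha,\xi}\subseteq V_{\alpha+1,\xi}$ and the real $c$ delivers $(\star,\Qnm_{\alpha,\xi},\Qnm_{\alpha+1,\xi},V_{\alpha,\xi},V_{\alpha+1,\xi},\sqsubset,c)$. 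In case (ii), with the $\Por_{\Delta(\xi),\xi}$-name $\Tnm_\xi$ for a subalgebra of $\Sor_\xi$ and $\Qnm_{\beta,\xi}=\mathds{1}$ for $\beta\leq\Delta(\xi)$, $\Qnm_{\beta,\xi}=\Tnm_\xi$ for $\beta>\Delta(\xi)$, there are two possibilities: if $\alpha>\Delta(\xi)$ then $\Qnm_{\alpha,\xi}$ and $\Qnm_{\alpha+1,\xi}$ are the same name $\Tnm_\xi$ and interpret to a single poset $T_\xi\in V_{\Delta(\xi),\xi}\subseteq V_{\alpha,\xi}$, so the step statement is $(\star,T_\xi,T_\xi,V_{\alpha,\xi},V_{\alpha+1,\xi},\sqsubset,c)$, which holds by Lemma \ref{fixedStar} since $c$ is $\sqsubset$-unbounded over $V_{\alpha,\xi}$; if $\alpha\leq\Delta(\xi)$ then $\Qnm_{\alpha,\xi}=\mathds{1}$, a $\Qnm_{\alpha,\xi}$-name for a real in $V_{\alpha,\xi}$ is just a real $g\in\omega^\omega\cap V_{\alpha,\xi}$, and the step statement only asks that $\Vdash c\not\sqsubset g$ for every such $g$, which is automatic because $c$ and $g$ already belong to $V_{\alpha+1,\xi}$, the statement $c\not\sqsubset g$ holds there (as $c$ is $\sqsubset$-unbounded over $V_{\alpha,\xi}$) and it is absolute. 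Taking $\xi=\gamma$ completes the induction, and with it the proof.

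The substantive content is entirely in the machinery already at hand — Theorems \ref{PreservStar} and \ref{suslinStar} and Lemmas \ref{CompSubordIt} and \ref{fixedStar} — and what remains is only bookkeeping. The one point deserving care is keeping \lcom$c$ is $\sqsubset$-unbounded over $V_{\alpha,\xi}$\rcom\ available exactly when the step statement at $\xi$ is verified, which is why the steps are organized as an induction on $\xi$ rather than a single appeal, together with the routine observation that in case (ii) with $\alpha>\Delta(\xi)$ the names $\Qnm_{\alpha,\xi}$ and $\Qnm_{\alpha+1,\xi}$ are literally the one name $\Tnm_\xi$, with a common interpretation lying already in $V_{\Delta(\xi),\xi}$.
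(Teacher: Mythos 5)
Your proof is correct and is exactly the argument the paper sketches: an induction on $\xi\leq\gamma$ that reduces, via Theorem~\ref{PreservStar} applied to the quotient iterations over $M=V_{\alpha,0}$ and $N=V_{\alpha+1,0}$, to the one-step $(\star)$-statements, settling case (2)(i) with Theorem~\ref{suslinStar} and case (2)(ii) with Lemma~\ref{fixedStar} (or, when $\alpha\leq\Delta(\xi)$, by the trivial absoluteness observation). The paper compresses all of this into one line, so your write-up simply makes explicit the bookkeeping the author left implicit.
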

\begin{proof}
   Fixing $\alpha\leq\delta$, this is easily proved by induction on $\xi\leq\gamma$ by using Lemma \ref{fixedStar} and Theorems \ref{suslinStar}
   and $\ref{PreservStar}$ with $M=V_{\alpha,0}$ and $N=V_{\alpha+1,0}$.
\end{proof}
\begin{corollary}\label{MainMatrixConsq}
   With the same assumptions as in Theorem \ref{PresvUnbdg}, if $\delta$ has uncountable cofinality then
   $\Vdash_{\Por_{\delta,\gamma}}\cf(\delta)\leq\dfrak_\sqsubset$.
\end{corollary}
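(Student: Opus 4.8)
The plan is to combine Theorem \ref{PresvUnbdg} with Theorem \ref{realsmatrixbelow} and Lemma \ref{TriangleImpl}. First I would fix, in $V_{\delta,\gamma}$, an arbitrary family $D\subseteq\omega^\omega$ with $|D|<\cf(\delta)$ and show it cannot be $\sqsubset$-dominating; this gives $\cf(\delta)\leq\dfrak_\sqsubset$ in the extension. Each $g\in D$ is (a realization of) a $\Por_{\delta,\gamma}$-name $\dot{g}$ for a real in $\omega^\omega$, so by Theorem \ref{realsmatrixbelow}(b), using that $\delta$ has uncountable cofinality, there is some $\alpha_{\dot g}<\delta$ such that $\dot{g}$ is a $\Por_{\alpha_{\dot g},\gamma}$-name. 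Since $|D|<\cf(\delta)$, the supremum $\alpha^*:=\sup_{g\in D}\alpha_{\dot g}$ is still below $\delta$; pick any $\alpha$ with $\alpha^*\leq\alpha<\delta$. Then every $g\in D$ lies in $V_{\alpha,\gamma}$.

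Now invoke Theorem \ref{PresvUnbdg}: under the stated hypothesis (which is exactly the hypothesis of the corollary), $\Por_{\alpha+1,\gamma}$ forces that $\dot{c}_\alpha$ is $\sqsubset$-unbounded over $V_{\alpha,\gamma}$, i.e. $\dot{c}_\alpha\not\sqsubset g$ for every $g\in\omega^\omega\cap V_{\alpha,\gamma}$, in particular for every $g\in D$. Since $\Por_{\alpha+1,\gamma}\preceq\Por_{\delta,\gamma}$ (condition (3) of Definition \ref{DefMatrixIt} together with Lemma \ref{CompSubordIt}), this statement persists to $V_{\delta,\gamma}$: the real $c_\alpha$ witnesses that no $g\in D$ is $\sqsubset$-above it, so $D$ is not $\sqsubset$-dominating. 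As $D$ was an arbitrary family of size $<\cf(\delta)$, we conclude $\Vdash_{\Por_{\delta,\gamma}}\cf(\delta)\leq\dfrak_\sqsubset$.

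The only mildly delicate point is the absoluteness/persistence step: that \lcom $c_\alpha\not\sqsubset g$\rcom\ for a fixed real $g\in V_{\alpha,\gamma}$, once forced by $\Por_{\alpha+1,\gamma}$, remains true in the larger extension $V_{\delta,\gamma}$. This is because $\sqsubset=\bigcup_n\sqsubset_n$ with each $\sqsubset_n$ closed (hence $\sqsubset$ is $\Sigma^0_2$ and $\not\sqsubset$ is $\Pi^0_2$), and $\Pi^0_2$ statements about reals appearing in an intermediate model are downward absolute between it and any generic extension; equivalently, one can phrase it via Lemma \ref{fixedStar} and Theorem \ref{PreservStar} applied to the complete embedding $\Por_{\alpha+1,\gamma}\preceq\Por_{\delta,\gamma}$. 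I do not expect this to be a real obstacle — it is the same kind of argument already used in the proof of Theorem \ref{PresvUnbdg} — but it is the one spot where the closedness condition on the $\sqsubset_n$ is actually being used at this stage. Everything else is bookkeeping with cofinalities.
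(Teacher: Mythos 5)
Your proposal is correct and follows essentially the same route as the paper's proof: pick an arbitrary $F\subseteq\omega^\omega$ of size $<\cf(\delta)$ in $V_{\delta,\gamma}$, use Theorem \ref{realsmatrixbelow} together with regularity of $\cf(\delta)$ (preserved since the iteration is ccc) to push $F$ down into some $V_{\alpha,\gamma}$ with $\alpha<\delta$, then invoke Theorem \ref{PresvUnbdg} to get $c_\alpha$ $\sqsubset$-unbounded over $V_{\alpha,\gamma}$, so $F$ cannot be $\sqsubset$-dominating. The persistence step you flag — that $c_\alpha\not\sqsubset g$, once true in $V_{\alpha+1,\gamma}$, remains true in $V_{\delta,\gamma}$ — is left tacit in the paper but is exactly the $\Pi^0_2$-absoluteness point you describe, so you have filled in the same argument, just more explicitly.
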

\begin{proof}
   Let $G$ be $\Por_{\delta,\gamma}$-generic over $V$. Work in $V_{\delta,\gamma}$. Let $F\subseteq\omega^\omega$ with
   $|F|<\cf(\delta)$. By Theorem \ref{realsmatrixbelow}, there exists an $\alpha<\delta$ such that $F\subseteq V_{\alpha,\gamma}$ and,
   as $c_\alpha:=\dot{c}_{\alpha}[G]$ is $\sqsubset$-unbounded over $V_{\alpha,\gamma}$, no real in $F$ can dominate $c_\alpha$.
\end{proof}
By Lemma \ref{InvforPitchfork}, Corollary \ref{MainMatrixConsq} holds for $\sqsubset=\pitchfork$ with $\non(\Nwf)$ in place of
$\dfrak_{\pitchfork}$.

\section{Models for the right hand side of Cichon's diagram}\label{SecModelRight}

Throughout this section our results are given for a model $V$ of $\thzfc$. There, we fix the following. $\mu_1\leq\mu_2\leq\nu\leq\kappa$
uncountable regular cardinals and $\lambda\geq\kappa$ a cardinal, $t:\kappa\nu\to\kappa$ such that $t(\kappa\delta+\alpha)=\alpha$ for
$\delta<\nu$ and $\alpha<\kappa$. The product $\kappa\nu$, as all the products we are going to consider throughout this section, denotes ordinal
product. Also, fix a bijection $g:\lambda\to\kappa\times\lambda$ and
put $(\cdot)_0:\kappa\times\lambda\to\kappa$ the projection onto the first coordinate.\\
All the matrix iterations are defined under the considerations of Context \ref{ContextMatrix}.

\subsection{Models with $\cfrak$ large}\label{SubsecContLarge}

In this part, assume in $V$ that $\cf(\lambda)\geq\aleph_1$ and that the conclusions of Theorem \ref{LeftCichonCovMLarge} hold with
$\mu_1=\mu_2=\mu_3=\aleph_1$ (we can get this model by just adding $\lambda$-many Cohen reals to a model of $\mathbf{GCH}$).
\begin{theorem}\label{RightContLarge1}
   There exists a ccc poset that forces $\mathbf{GCH}_\lambda$, $\add(\Nwf)=\non(\Mwf)$ $=\nu$ and $\cov(\Mwf)=\cof(\Nwf)=\kappa$.
\end{theorem}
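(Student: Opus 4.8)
The plan is to construct a matrix iteration $\Por_{\delta,\gamma}$ in the sense of Context \ref{ContextMatrix}, with $\delta=\nu$ (so that $\cf(\delta)=\nu$ is uncountable) and $\gamma=\lambda$, starting over the model $V$ in which $\mathbf{GCH}_\lambda$ and the conclusion of Theorem \ref{LeftCichonCovMLarge} with $\mu_1=\mu_2=\mu_3=\aleph_1$ already hold. The horizontal coordinates $\xi<\lambda$ will be used, via the bijection $g:\lambda\to\kappa\times\lambda$ and the book-keeping projection $(\cdot)_0$, to enumerate all $\Por_{\Delta(\xi),\xi}$-names for subalgebras of $\Aor$ of size $<\kappa$ occurring anywhere in the iteration; so for $\xi<\lambda$ we put $\Qnm_{\alpha,\xi}$ to be of type (2)(ii) of Context \ref{ContextMatrix}, namely a subalgebra $\Tnm_\xi$ of $\Aor$ of size $<\kappa$ attached above level $\Delta(\xi)$, where the assignment $\xi\mapsto(\Delta(\xi),\Tnm_\xi)$ is arranged by book-keeping so that every such small subalgebra appearing at any vertical level and any horizontal stage is eventually caught. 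The vertical iteration (column $0$) is the fsi of Cohen forcing of length $\nu$, as dictated by Context \ref{ContextMatrix}(1), which is exactly what produces the Cohen reals $\dot c_\alpha$ used below.

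Next I would read off the values of the invariants in the final extension $V_{\nu,\lambda}$. The upper bound $\cfrak=\lambda$ and $\mathbf{GCH}_\lambda$ are standard, since the whole matrix iteration has size $\lambda$ and ccc, and it is built over a model of $\mathbf{GCH}_\lambda$. For $\cof(\Nwf)\leq\kappa$ and $\cov(\Mwf)=\kappa$: since every $\Qnm_{\alpha,\xi}$ is (a name for) a subalgebra of $\Aor$ of size $<\kappa$, an easy cofinality/closure computation (each real, each null set code, appears at some intermediate stage, and the $\kappa$-many amoeba reals added cover all Borel null sets coded so far) gives a cofinal family of size $\kappa$ in $\Nwf$ and hence $\cov(\Mwf)\leq\cof(\Nwf)\leq\kappa$; the reverse inequality $\kappa\leq\cov(\Mwf)$ comes from the $\kappa$-many Cohen reals added in the vertical direction at the top together with the amoeba-localization argument exactly as in the proof of Theorem \ref{LeftCichonCovMLarge} (each initial segment of size $<\kappa$ is captured in an intermediate model and a later amoeba/Cohen real escapes it). For $\non(\Mwf)\leq\nu$: the $\nu$-many Cohen reals $\dot c_\alpha$ added along the vertical column form a non-meager set of size $\nu$ (again as in \ref{LeftCichonCovMLarge}, via $(\blacktriangle,\eqcirc,\cdot,\nu)$-style preservation, since the horizontal forcings are $\sigma$-centered amoeba subalgebras, hence satisfy $(+^{\aleph_1}_{\cdot,\eqcirc})$? no — amoeba is not $\eqcirc$-good in general, so instead one uses that small subalgebras of $\Aor$ have size $<\nu$ and preserve $(\blacktriangle,\eqcirc,C,\nu)$ by Lemma \ref{smallPlus} and Theorem \ref{preservPlus}). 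Finally $\add(\Nwf)\geq\nu$ and $\add(\Nwf)\leq\non(\Mwf)$ is free from Cichon's diagram, while $\nu\leq\add(\Nwf)$: apply Corollary \ref{MainMatrixConsq} in its $\pitchfork$-version stated right after it, $\cf(\delta)=\nu\leq\non(\Nwf)$ — wait, that gives a lower bound on $\non(\Nwf)$, not $\add(\Nwf)$. So the correct route for $\add(\Nwf)\geq\nu$ is the amoeba-localization argument: from $<\nu$ Borel null sets coded in $V_{\nu,\lambda}$, by Theorem \ref{realsmatrixbelow} they all appear in some $V_{\alpha,\lambda}$ with $\alpha<\nu$, and a subalgebra of $\Aor$ of size $<\kappa$ (in fact of size $<\nu$) caught by book-keeping at some later horizontal stage adds a null set covering them all; hence their union is null.

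The genuinely delicate point, and the one on which I would spend the most care, is verifying that Context \ref{ContextMatrix} plus Theorem \ref{PresvUnbdg} actually applies here, i.e. that the hypothesis of Theorem \ref{PresvUnbdg} holds so that each Cohen real $\dot c_\alpha$ stays $\sqsubset$-unbounded (for $\sqsubset=\eqcirc$, to get $\non(\Mwf)\le\nu$, one does not need the $\star$-machinery since these are just Cohen reals preserved by $(+^{\aleph_1}_{\cdot,\eqcirc})$-forcings — but amoeba subalgebras need not have that property). The clean fix is to recall that every $\Qnm_{\alpha,\xi}$ here is of type (2)(ii), a subalgebra of $\Aor$ of \emph{small} size $<\kappa$; so no column is of type (2)(i) at all, and the hypothesis of Theorem \ref{PresvUnbdg} is vacuously satisfied for \emph{any} $\sqsubset$ in Context \ref{ContextUnbd}. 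Thus $\dot c_\alpha$ is $\eqcirc$-unbounded over $V_{\alpha,\lambda}$, giving that $\{\dot c_\alpha:\alpha<\nu\}$ is non-meager and $\non(\Mwf)\le\nu$; and the matching bound $\nu\leq\non(\Mwf)$ follows from $\add(\Nwf)\leq\non(\Mwf)$ together with $\nu\leq\add(\Nwf)$ proved above. The remaining chore — checking the book-keeping really does enumerate, at cofinally many stages below every vertical level, every small subalgebra of $\Aor$ and every Borel-null-set code — is routine but is where one must be careful that the bijection $g$ and the projection $(\cdot)_0$ are used so that names depending only on $\Por_{\Delta(\xi),\xi}$ are well-defined; I would organize this exactly as in the iterations of Section \ref{SecModelLeft}. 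The main obstacle, then, is bookkeeping book, not any new forcing idea: all preservation is inherited from Theorems \ref{PresvUnbdg}, \ref{realsmatrixbelow} and Corollary \ref{MainMatrixConsq}, and all localization arguments are copied from Theorem \ref{LeftCichonCovMLarge}.
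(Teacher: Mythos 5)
Your matrix is oriented the wrong way around, and this derails the whole argument. In the paper's framework, Corollary~\ref{MainMatrixConsq} (via the Cohen reals $\dot c_\alpha$ of the vertical iteration) delivers the lower bound $\cov(\Mwf)=\dfrak_{\eqcirc}\geq\cf(\delta)$, so to get $\cov(\Mwf)\geq\kappa$ one must take the \emph{vertical} length $\delta=\kappa$. You set $\delta=\nu$; then Corollary~\ref{MainMatrixConsq} gives only $\cov(\Mwf)\geq\nu$, and your claim that ``$\kappa$-many Cohen reals [are] added in the vertical direction'' is simply false --- there are only $\nu$ of them. The paper's matrix is $\Por_{\kappa,\kappa\nu}$ with $\delta=\kappa$ and $\gamma=\kappa\nu$ (ordinal product, so $\cf(\gamma)=\nu$); the $\nu$-cofinality of the horizontal direction is what gives $\non(\Mwf)\leq\nu$ (via the cofinally many Cohen reals added horizontally at the top row) and what makes the covering claim for $\add(\Nwf)\geq\nu$ go through.

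The second substantive error is the choice of horizontal poset. You use book-kept subalgebras of $\Aor$ of size $<\kappa$; the paper uses the \emph{full} amoeba $\Aor^{V_{t(\xi),\xi}}$ attached at level $t(\xi)$, where $t(\kappa\delta'+\alpha)=\alpha$. The full amoeba at stage $\xi$ immediately produces a Borel null set $N_\xi$ covering \emph{all} null sets coded in $V_{t(\xi),\xi}$, and since $\cf(\kappa\nu)=\nu$ and every $\alpha<\kappa$ is hit cofinally often by $t$, any family of $<\nu$ null sets in the final model is captured by some intermediate $V_{\alpha,\eta}$ (Theorem~\ref{realsmatrixbelow}) and is then covered by a later $N_\xi$ with $t(\xi)=\alpha$. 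This is precisely what gives $\add(\Nwf)\geq\nu$ and also $\cof(\Nwf)\leq|\kappa\nu|=\kappa$. With small subalgebras you would need to book-keep \emph{families of null sets}, not subalgebras, and in your setup (horizontal length $\lambda$ with possibly $\cf(\lambda)<\nu$) the $<\nu$ null sets need not even appear at a common horizontal stage, so the localization step fails. The small-subalgebra device is for the later subsections where one wants $\add(\Nwf)$ or $\cov(\Nwf)$ \emph{strictly below} $\nu$; here, where $\add(\Nwf)$ should equal $\nu$, the full amoeba is the right tool. Your observation that the hypothesis of Theorem~\ref{PresvUnbdg} is vacuous (since every column is of type (2)(ii)) is correct and matches the paper, but that alone cannot rescue the reversed orientation.
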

\begin{proof}
   Perform a matrix iteration $\Por_{\kappa,\kappa\nu}=
   \langle\langle\Por_{\alpha,\xi},\Qnm_{\alpha,\xi}\rangle_{\xi<\kappa\nu}\rangle_{\alpha\leq\kappa}$ such that, for a fixed $\xi<\kappa\nu$,
   \begin{itemize}
      \item $\Anm_\xi$ is a $\Por_{t(\xi),\xi}$-name for $\Aor^{V_{t(\xi),\xi}}$ and
            \[\Qnm_{\alpha,\xi}=\left\{
               \begin{array}{ll}
                   \mathds{1} & \textrm{if $\alpha\leq t(\xi)$,}\\
                   \Anm_\xi & \textrm{if $\alpha>t(\xi)$.}
               \end{array}
               \right.\]
   \end{itemize}
   Work in an extension $V_{\kappa,\kappa\nu}$ of the matrix iteration. As the hypothesis of Theorem \ref{PresvUnbdg} holds for
   $\sqsubset=\eqcirc$, from Corollary \ref{MainMatrixConsq} we get $\cov(\Mwf)\geq\kappa$. It is clear that $\mathbf{GCH}_\lambda$
   holds and that $\non(\Mwf)\leq\nu$ because of the $\nu$-cofinally many Cohen reals added in the iteration, so it
   remains to prove that $\nu\leq\add(\Nwf)$ and $\cof(\Nwf)\leq\kappa$. Note that, for each
   $\xi<\kappa\nu$, $\Aor_\xi:=\Anm_{t(\xi),\xi}(G)$ adds a Borel null set $N_\xi$ coded in $V_{t(\xi)+1,\xi+1}$ that covers all the
   Borel null sets coded in $V_{t(\xi),\xi}$. To conclude this proof, it is enough to prove the following.
   \begin{clm}\label{ClaimAmoeba}
      Every family of $<\nu$ many Borel null sets coded in $V_{\kappa,\kappa\nu}$ is covered by some $N_\xi$.
   \end{clm}
   \begin{proof}
      Let $\Bwf$ be such a family. By Theorem \ref{realsmatrixbelow}, there exist $\alpha<\kappa$ and $\eta<\kappa\nu$ such that
      all the members of $\Bwf$ are coded in $V_{\alpha,\eta}$. By the definition of $t$, there exists a $\xi\in(\eta,\kappa\nu)$
      such that $t(\xi)=\alpha$, so $N_\xi$ covers all the members of $\Bwf$.
   \end{proof}
\end{proof}
\begin{theorem}\label{RightContLarge2}
   There exists a ccc poset that forces $\mathbf{GCH}_\lambda$, $\add(\Nwf)=\non(\Mwf)$ $=\cov(\Mwf)=\nu$ and
   $\dfrak=\non(\Nwf)=\cof(\Nwf)=\kappa$.
\end{theorem}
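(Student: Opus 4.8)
The plan is to keep the matrix iteration of the proof of Theorem \ref{RightContLarge1} essentially intact, but to \emph{insert, at a cofinal set of columns, a full column} of the eventually different reals forcing $\Eor$. These $\Eor$-columns destroy the $\eqcirc$-unboundedness of the vertical Cohen reals (so that Corollary \ref{MainMatrixConsq} no longer forces $\cov(\Mwf)$ up to $\kappa$) and at the same time supply, cofinally, eventually different reals that will witness $\cov(\Mwf)=\nu$; crucially, $\Eor$ is a Suslin ccc poset satisfying $(+_{\Eor,<^*})$ (Lemma \ref{EvDiffPlus}) and $(+_{\Eor,\pitchfork})$ (as is used in the proof of Theorem \ref{LeftCichonDomNonNLarge}), so the insertion does not spoil the preservation machinery for $<^*$ and $\pitchfork$.

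Concretely, I would perform a matrix iteration $\Por_{\kappa,\gamma}=\langle\langle\Por_{\alpha,\xi},\Qnm_{\alpha,\xi}\rangle_{\xi<\gamma}\rangle_{\alpha\leq\kappa}$ with $\gamma=\kappa\nu$ (so $\cf(\gamma)=\nu$), under Context \ref{ContextMatrix}, where at the columns $\xi=\kappa\delta$ ($\delta<\nu$) — which are cofinal in $\gamma$ — one takes the \emph{full} column $\Qnm_{\alpha,\kappa\delta}=\Eor$ (case (2)(i)), and at every other column one repeats the amoeba columns of Theorem \ref{RightContLarge1}, i.e. $\Qnm_{\alpha,\xi}=\mathds{1}$ for $\alpha\leq t(\xi)$ and $\Qnm_{\alpha,\xi}=\Anm_\xi$ (a name for $\Aor^{V_{t(\xi),\xi}}$) for $\alpha>t(\xi)$; the amoeba columns still meet every level in $(0,\kappa)$ cofinally often. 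Since the only full columns use $\Eor$, the hypothesis of Theorem \ref{PresvUnbdg} holds for both $\sqsubset={<^*}$ and $\sqsubset=\pitchfork$ (using $\Vdash(+_{\Eor,<^*})$ and $\Vdash(+_{\Eor,\pitchfork})$, preservation through the amoeba columns being Lemma \ref{fixedStar} and through the $\Eor$-columns being Theorem \ref{suslinStar}, combined via Theorem \ref{PreservStar}). As $\delta=\kappa$ has uncountable cofinality, Corollary \ref{MainMatrixConsq} gives in $V_{\kappa,\gamma}$ that $\kappa\leq\dfrak$ and, by Lemma \ref{InvforPitchfork}, $\kappa\leq\dfrak_\pitchfork\leq\non(\Nwf)$.

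For the rest I would argue in $V_{\kappa,\gamma}$. Exactly as in Theorem \ref{RightContLarge1}, a repetition of Claim \ref{ClaimAmoeba} (using Theorem \ref{realsmatrixbelow} and that the amoeba columns meet every level cofinally) shows that every family of ${<}\,\nu$ Borel null sets is covered by a single amoeba null set $N_\xi$, and that the $N_\xi$'s form a cofinal subfamily of $\Nwf$ of size $\kappa$; hence $\nu\leq\add(\Nwf)$ and $\cof(\Nwf)\leq\kappa$. Because the top row is an fsi of length $\gamma$ with nontrivial iterands and $\cf(\gamma)=\nu$, at $\nu$-many cofinal limit stages of cofinality $\omega$ a Cohen real over the corresponding initial model is added; these $\nu$ reals form a non-meager set (any meager Borel set is coded before one of those stages, and the Cohen real added there avoids it), so $\non(\Mwf)\leq\nu$. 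Dually, the $\nu$-many full $\Eor$-columns add at top level eventually different reals $d_\delta$ over $V_{\kappa,\kappa\delta}$, and since every real of $V_{\kappa,\gamma}$ lies in some $V_{\kappa,\kappa\delta}$, the family $\{d_\delta:\delta<\nu\}$ is $\eqcirc$-dominating, i.e. $\cov(\Mwf)=\dfrak_\eqcirc\leq\nu$. Cichon's inequalities then close all gaps: $\nu\leq\add(\Nwf)\leq\add(\Mwf)\leq\cov(\Mwf)\leq\nu$ and $\nu\leq\add(\Nwf)\leq\cov(\Nwf)\leq\non(\Mwf)\leq\nu$ give $\add(\Nwf)=\cov(\Mwf)=\non(\Mwf)=\nu$, while $\kappa\leq\dfrak\leq\cof(\Mwf)\leq\cof(\Nwf)\leq\kappa$ and $\kappa\leq\non(\Nwf)\leq\cof(\Nwf)\leq\kappa$ give $\dfrak=\non(\Nwf)=\cof(\Nwf)=\kappa$; finally $\mathbf{GCH}_\lambda$ follows by the usual book-keeping, the iteration having size $\leq\lambda$.

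The step I expect to be the main obstacle is the simultaneous control of $\cov(\Mwf)$ and $\non(\Mwf)$: the full $\Eor$-columns must appear cofinally (so the eventually different reals form a $\eqcirc$-dominating family and so that the $\dot c_\alpha$ cease to be $\eqcirc$-unbounded — otherwise Corollary \ref{MainMatrixConsq} applied to $\eqcirc$ would force $\cov(\Mwf)\geq\kappa$), yet each such column makes every earlier real meager. The point reconciling this with $\non(\Mwf)\leq\nu$ is that non-meagerness of the \emph{family} of Cohen reals only requires each single meager Borel set to miss one member, and this survives because each of those Cohen reals is Cohen over all earlier stages of the top row. One must also check carefully that the preservation book-keeping inherited from Theorem \ref{RightContLarge1} for the amoeba part is not disturbed by the interleaved $\Eor$-columns, i.e. that the $(\star,\cdot,\cdot,\cdot,\cdot,\sqsubset,\dot c_\alpha)$ statements needed for Theorem \ref{PresvUnbdg} still hold column-by-column at the $\Eor$-columns, which they do by Theorem \ref{suslinStar}.
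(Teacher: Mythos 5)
Your proposal is correct and takes essentially the same approach as the paper: a matrix iteration on $\kappa\times\kappa\nu$ interleaving full $\Eor$-columns with partial amoeba columns, applying Corollary \ref{MainMatrixConsq} for $\sqsubset\in\{<^*,\pitchfork\}$ (the full columns satisfy $(+_{\Eor,<^*})$ by Lemma \ref{EvDiffPlus} and $(+_{\Eor,\pitchfork})$ by $\sigma$-centeredness via Lemma \ref{centeredFork}) to get $\dfrak,\non(\Nwf)\geq\kappa$, using the cofinal Cohen and eventually different reals for $\non(\Mwf),\cov(\Mwf)\leq\nu$, and a Claim \ref{ClaimAmoeba}-style argument for $\add(\Nwf)\geq\nu$ and $\cof(\Nwf)\leq\kappa$. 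The only difference is cosmetic scheduling: you place the $\Eor$-columns only at $\xi=\kappa\delta$ ($\delta<\nu$), whereas the paper alternates them with the amoeba columns (even/odd $\xi$); both schedulings are cofinal and serve the same purpose.
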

\begin{proof}
   Perform a matrix iteration $\Por_{\kappa,\kappa\nu}=
   \langle\langle\Por_{\alpha,\xi},\Qnm_{\alpha,\xi}\rangle_{\xi<\kappa\nu}\rangle_{\alpha\leq\kappa}$ such that, for a fixed $\xi<\kappa\nu$,
   \begin{itemize}
      \item if $\xi\equiv0\modulo2$, $\Qnm_{\alpha,\xi}$ is a $\Por_{\alpha,\xi}$-name of $\Eor^{V_{\alpha,\xi}}$, and
      \item if $\xi\equiv1\modulo2$, $\Anm_\xi$ is a $\Por_{t(\xi),\xi}$-name for $\Aor^{V_{t(\xi),\xi}}$ and
            \[\Qnm_{\alpha,\xi}=\left\{
               \begin{array}{ll}
                   \mathds{1} & \textrm{if $\alpha\leq t(\xi)$,}\\
                   \Anm_\xi & \textrm{if $\alpha>t(\xi)$.}
               \end{array}
               \right.\]
   \end{itemize}
   In an extension $V_{\kappa,\kappa\nu}$, Lemma \ref{EvDiffPlus} and Corollary \ref{MainMatrixConsq} imply that
   $\kappa\leq\dfrak,\non(\Nwf)$ and the $\nu$-cofinally many Cohen and eventually different reals in the iteration
   give $\non(\Mwf),\cov(\Mwf)\leq\nu$. Claim \ref{ClaimAmoeba} also holds in this case (but we get the $N_\xi$ only when $\xi$ is odd),
   so $\nu\leq\add(\Nwf)$ and $\cof(\Nwf)\leq\kappa$.
\end{proof}
The remaining results in this part are proved in a similar fashion as the two previous results.
\begin{theorem}\label{RightContLarge3}
   There exists a ccc poset that forces $\mathbf{GCH}_\lambda$, $\add(\Nwf)=\non(\Mwf)$ $=\non(\Nwf)=\nu$ and
   $\dfrak=\cof(\Nwf)=\kappa$.
\end{theorem}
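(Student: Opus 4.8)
The plan is to follow the proof of Theorem \ref{RightContLarge2}, replacing the eventually different reals forcing by the random algebra. Concretely, I would perform a matrix iteration $\Por_{\kappa,\kappa\nu}=\langle\langle\Por_{\alpha,\xi},\Qnm_{\alpha,\xi}\rangle_{\xi<\kappa\nu}\rangle_{\alpha\leq\kappa}$, in the sense of Context \ref{ContextMatrix}, so that for each $\xi<\kappa\nu$: if $\xi\equiv0\modulo2$ then $\Qnm_{\alpha,\xi}$ is a $\Por_{\alpha,\xi}$-name for $\Bor^{V_{\alpha,\xi}}$ (that is, case (2)(i) of Context \ref{ContextMatrix} with $\Sor_\xi=\Bor$); and if $\xi\equiv1\modulo2$ then, fixing a $\Por_{t(\xi),\xi}$-name $\Anm_\xi$ for $\Aor^{V_{t(\xi),\xi}}$, we set $\Qnm_{\alpha,\xi}=\mathds{1}$ for $\alpha\leq t(\xi)$ and $\Qnm_{\alpha,\xi}=\Anm_\xi$ for $\alpha>t(\xi)$ (case (2)(ii)). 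All the computations are then carried out in a generic extension $V_{\kappa,\kappa\nu}$.

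For $\dfrak\geq\kappa$ the argument is exactly as in the earlier theorems: $\Bor$ is $\omega^\omega$-bounding, so $(+_{\Bor,<^*})$ holds, hence the hypothesis of Theorem \ref{PresvUnbdg} is met for $\sqsubset={<^*}$ (the odd columns are of type (2)(ii) and are handled by Lemma \ref{fixedStar} inside the proof of Theorem \ref{PresvUnbdg}); Corollary \ref{MainMatrixConsq} then gives $\kappa=\cf(\kappa)\leq\dfrak$. The amoeba columns control the remaining $\Nwf$-invariants just as in Theorems \ref{RightContLarge1} and \ref{RightContLarge2}: for odd $\xi$ the forcing $\Anm_\xi$ adds a Borel null set $N_\xi$ covering every Borel null set coded in $V_{t(\xi),\xi}$, and the analogue of Claim \ref{ClaimAmoeba} (with $N_\xi$ only for odd $\xi$; note it suffices that $t(\xi)\geq\alpha$ for cofinally many odd $\xi$, which is the case) shows that every family of fewer than $\nu$ Borel null sets of $V_{\kappa,\kappa\nu}$ lies below a single $N_\xi$. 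Applying this to singletons yields $\cof(\Nwf)\leq\kappa$, and in general $\add(\Nwf)\geq\nu$; also $\non(\Mwf)\leq\nu$ because of the $\nu$-cofinally many Cohen reals added along the top row, exactly as in Theorem \ref{RightContLarge1}, and $\mathbf{GCH}_\lambda$ (hence $\cfrak=\lambda$) is immediate.

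The one genuinely new point is $\non(\Nwf)\leq\nu$. For each even $\xi<\kappa\nu$ let $r_\xi\in V_{\kappa,\kappa\nu}$ be the random real over $V_{\kappa,\xi}$ added by $\Qnm_{\kappa,\xi}=\Bor^{V_{\kappa,\xi}}$ at stage $\xi$ of the top-row iteration, and set $R=\{r_{\kappa\delta}:1\leq\delta<\nu\}$, a set of size $\nu$. I claim $R$ is non-null. Let $N$ be a Borel null set coded in $V_{\kappa,\kappa\nu}$; since the top row $\Por_{\kappa,\kappa\nu}$ is a finite support iteration and $\cf(\kappa\nu)=\nu>\aleph_0$, a code for $N$ already lies in $V_{\kappa,\eta}$ for some $\eta<\kappa\nu$ (as in Theorem \ref{realsmatrixbelow}). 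Choosing $\delta<\nu$ with $\kappa\delta>\eta$, the real $r_{\kappa\delta}$ is random over $V_{\kappa,\kappa\delta}\supseteq V_{\kappa,\eta}$, so $r_{\kappa\delta}\notin N$ and therefore $R\not\subseteq N$. Hence $\non(\Nwf)\leq|R|=\nu$.

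Finally I would read off the asserted values from Cichon's diagram. Combining $\non(\Mwf)\leq\nu$, $\non(\Nwf)\leq\nu$ and $\add(\Nwf)\geq\nu$ with the inequalities $\add(\Nwf)\leq\add(\Mwf)\leq\cov(\Mwf)\leq\non(\Nwf)$ and $\add(\Mwf)\leq\bfrak\leq\non(\Mwf)$ forces $\add(\Nwf)=\add(\Mwf)=\cov(\Mwf)=\bfrak=\non(\Mwf)=\non(\Nwf)=\nu$; and $\kappa\leq\dfrak\leq\cof(\Mwf)\leq\cof(\Nwf)\leq\kappa$ forces $\dfrak=\cof(\Nwf)=\kappa$. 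I expect the only real obstacle to be checking that the full random columns do not destroy the preservation of the $\sqsubset$-unboundedness of the Cohen reals $\dot c_\alpha$ for $\sqsubset={<^*}$ — which is exactly why $\Bor$ is inserted via case (2)(i) and why $(+_{\Bor,<^*})$ is the property one needs. By contrast these columns do kill the analogous preservation for $\sqsubset=\pitchfork$ (random forcing is not $\sigma$-centered, and $(+_{\Bor,\pitchfork})$ fails for the name of the random real, so Theorem \ref{PresvUnbdg} is unavailable for $\pitchfork$); unlike in the amoeba-only iteration, where it would give $\non(\Nwf)\geq\kappa$, this is precisely what is wanted here, since $\non(\Nwf)$ must remain small.
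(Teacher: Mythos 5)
Your construction is exactly the paper's: even columns are full random algebras $\Bor^{V_{\alpha,\xi}}$ (case (2)(i)) and odd columns are restricted amoeba subalgebras at level $t(\xi)$ (case (2)(ii)), and your computations — $(+_{\Bor,<^*})$ with Theorem \ref{PresvUnbdg} and Corollary \ref{MainMatrixConsq} for $\dfrak\geq\kappa$, the analogue of Claim \ref{ClaimAmoeba} for $\add(\Nwf)\geq\nu$ and $\cof(\Nwf)\leq\kappa$, and the cofinal Cohen and random reals along the top row for $\non(\Mwf),\non(\Nwf)\leq\nu$ — are precisely the "similar fashion" details the paper leaves to the reader. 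The proposal is correct and takes the same approach as the paper.
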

\begin{proof}
   Perform a matrix iteration $\Por_{\kappa,\kappa\nu}=
   \langle\langle\Por_{\alpha,\xi},\Qnm_{\alpha,\xi}\rangle_{\xi<\kappa\nu}\rangle_{\alpha\leq\kappa}$ such that, for a fixed $\xi<\kappa\nu$,
   \begin{itemize}
      \item if $\xi\equiv0\modulo2$, $\Qnm_{\alpha,\xi}$ is a $\Por_{\alpha,\xi}$-name of $\Bor^{V_{\alpha,\xi}}$, and
      \item if $\xi\equiv1\modulo2$, $\Anm_\xi$ is a $\Por_{t(\xi),\xi}$-name for $\Aor^{V_{t(\xi),\xi}}$ and
            \[\Qnm_{\alpha,\xi}=\left\{
               \begin{array}{ll}
                   \mathds{1} & \textrm{if $\alpha\leq t(\xi)$,}\\
                   \Anm_\xi & \textrm{if $\alpha>t(\xi)$.}
               \end{array}
               \right.\]
   \end{itemize}
\end{proof}
\begin{theorem}\label{RightContLarge4}
   There exists a ccc poset that forces $\mathbf{GCH}_\lambda$, $\add(\Nwf)=\cof(\Mwf)$ $=\nu$ and
   $\non(\Nwf)=\cof(\Nwf)=\kappa$.
\end{theorem}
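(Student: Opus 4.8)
The plan is to imitate the proof of Theorem \ref{RightContLarge2}, putting Hechler forcing $\Dor$ in the role played there by the eventually different forcing $\Eor$. So, over $V$, I would run a matrix iteration $\Por_{\kappa,\kappa\nu}=\langle\langle\Por_{\alpha,\xi},\Qnm_{\alpha,\xi}\rangle_{\xi<\kappa\nu}\rangle_{\alpha\leq\kappa}$ as in Context \ref{ContextMatrix} so that, for a fixed $\xi<\kappa\nu$: if $\xi\equiv0\modulo2$ then $\Qnm_{\alpha,\xi}$ is a $\Por_{\alpha,\xi}$-name for $\Dor^{V_{\alpha,\xi}}$ (a column of type (2)(i) with $\Sor_\xi=\Dor$); and if $\xi\equiv1\modulo2$ then $\Anm_\xi$ is a $\Por_{t(\xi),\xi}$-name for $\Aor^{V_{t(\xi),\xi}}$ and
\[\Qnm_{\alpha,\xi}=\left\{
   \begin{array}{ll}
      \mathds{1} & \textrm{if $\alpha\leq t(\xi)$,}\\
      \Anm_\xi & \textrm{if $\alpha>t(\xi)$}
   \end{array}\right.\]
(a column of type (2)(ii) with $\Sor_\xi=\Aor$ and $\Delta(\xi)=t(\xi)$, exactly as in Theorem \ref{RightContLarge1}). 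Since $\Dor$ is a Suslin ccc poset this is a legitimate matrix iteration.

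Working in a generic extension $V_{\kappa,\kappa\nu}$, I would first handle the null side. Because $\Dor$ is $\sigma$-centered, Lemma \ref{centeredFork} gives $\Vdash_{\Por_{\alpha,\xi}}(+_{\Qnm_{\alpha,\xi},\pitchfork})$ on every even column, while the odd columns are of type (2)(ii) and impose no further condition; hence the hypothesis of Theorem \ref{PresvUnbdg} holds for $\sqsubset=\pitchfork$, each Cohen real $\dot c_\alpha$ stays $\pitchfork$-unbounded, and Corollary \ref{MainMatrixConsq} together with Lemma \ref{InvforPitchfork} give $\kappa=\cf(\kappa)\leq\non(\Nwf)$. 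On the other hand, as in Theorem \ref{RightContLarge1} each odd column $\xi$ produces a Borel null set $N_\xi$ coded in $V_{t(\xi)+1,\xi+1}$ that covers every Borel null set coded in $V_{t(\xi),\xi}$, and the argument of Claim \ref{ClaimAmoeba} (picking the witnessing column of odd parity, with $t(\xi)$ and $\xi$ large enough) shows both that every family of fewer than $\nu$ Borel null sets coded in $V_{\kappa,\kappa\nu}$ is covered by some $N_\xi$ and that $\{N_\xi: \xi<\kappa\nu \text{ odd}\}$ is cofinal in $\Nwf$; thus $\nu\leq\add(\Nwf)$ and $\cof(\Nwf)\leq|\kappa\nu|=\kappa$, whence $\non(\Nwf)=\cof(\Nwf)=\kappa$.

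For the meager side it is enough to get $\cof(\Mwf)\leq\nu$, since then $\nu\leq\add(\Nwf)\leq\add(\Mwf)\leq\cof(\Mwf)\leq\nu$ pins down $\add(\Nwf)=\cof(\Mwf)=\nu$. I would obtain $\non(\Mwf)\leq\nu$ from the Cohen reals added cofinally along the matrix, word for word as in Theorems \ref{RightContLarge1}--\ref{RightContLarge3}; and I would obtain $\dfrak\leq\nu$ from the fact that the even columns are \emph{full} $\Dor$-columns, so that at row $\kappa$ the column-$\xi$ generic (for $\xi$ even) is a Hechler real $d_\xi$ over all of $V_{\kappa,\xi}$ and therefore dominates $\omega^\omega\cap V_{\kappa,\xi}$: fixing a cofinal $S\subseteq\kappa\nu$ of even ordinals with $|S|=\cf(\kappa\nu)=\nu$ and using Theorem \ref{realsmatrixbelow} to see that every real of $V_{\kappa,\kappa\nu}$ lies in some $V_{\kappa,\eta}$ with $\eta<\kappa\nu$, hence in $V_{\kappa,\xi}$ for a suitable $\xi\in S$, the family $\{d_\xi:\xi\in S\}$ is dominating and $\dfrak\leq\nu$. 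Finally $\mathbf{GCH}_\lambda$, and in particular $\cfrak=\lambda$, is clear since $|\Por_{\kappa,\kappa\nu}|\leq\kappa\leq\lambda$ and $V\models\mathbf{GCH}_\lambda$.

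The main obstacle — and the only real departure from the proof of Theorem \ref{RightContLarge2} — is the inequality $\dfrak\leq\nu$: one must check that switching from $\Eor$ to $\Dor$ on the full columns genuinely brings $\dfrak$ down to $\nu$ rather than leaving it at $\kappa$. The delicate points are that $(+_{\Dor,<^*})$ fails, so the argument for $\dfrak\geq\kappa$ used in Theorems \ref{RightContLarge2} and \ref{RightContLarge3} (via Lemma \ref{EvDiffPlus} — respectively, $\Bor$ being $\omega^\omega$-bounding — Theorem \ref{PresvUnbdg} and Corollary \ref{MainMatrixConsq} for $\sqsubset={<^*}$) is unavailable here and no other route reinstates it; and that, although the amoeba reals of the odd columns are themselves dominating and number $|\kappa\nu|=\kappa$, no $<\kappa$-sized subfamily of them is dominating (each such subfamily sits below some $V_{\kappa,\eta}$ and is then caught by a later $d_\xi$), so nothing forces $\dfrak$ above $\nu$, while the \emph{full} $\Dor$-columns — acting over the whole models $V_{\kappa,\xi}$, unlike the amoeba staircase, whose generics only act over the thin models $V_{t(\xi),\xi}$ — do supply a dominating family indexed by a cofinal set of columns of size only $\nu$. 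Beyond this, one should recheck the facts already needed in the earlier proofs: that $\pitchfork$-unboundedness of the $\dot c_\alpha$ survives both kinds of columns (Theorem \ref{suslinStar} for the $\Dor$-columns, Lemma \ref{fixedStar} and the trivial cases for the amoeba staircase), and that the book-keeping for Claim \ref{ClaimAmoeba} can be arranged compatibly with the parity of the columns.
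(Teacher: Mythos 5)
Your construction is exactly the paper's (full Hechler columns at even $\xi$, the amoeba staircase at odd $\xi$), and your analysis --- $(+_{\Dor,\pitchfork})$ via $\sigma$-centeredness for $\non(\Nwf)\geq\kappa$, the null sets $N_\xi$ from the odd columns for $\add(\Nwf)\geq\nu$ and $\cof(\Nwf)\leq\kappa$, and the Cohen reals together with the full-column Hechler generics for $\cof(\Mwf)=\max\{\dfrak,\non(\Mwf)\}\leq\nu$ --- is precisely the ``similar fashion'' argument the paper intends, so the proposal is correct and takes the same route. The only slip is the remark that $|\Por_{\kappa,\kappa\nu}|\leq\kappa$: the full $\Dor$- and $\Aor$-iterands have size $\cfrak=\lambda$ in the intermediate models, so the poset has size $\lambda$, but cccness together with $\lambda^{\aleph_0}=\lambda$ still yields $\mathbf{GCH}_\lambda$ (and $\cfrak=\lambda$) in the extension.
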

\begin{proof}
   Perform a matrix iteration $\Por_{\kappa,\kappa\nu}=
   \langle\langle\Por_{\alpha,\xi},\Qnm_{\alpha,\xi}\rangle_{\xi<\kappa\nu}\rangle_{\alpha\leq\kappa}$ such that, for a fixed $\xi<\kappa\nu$,
   \begin{itemize}
      \item if $\xi\equiv0\modulo2$, $\Qnm_{\alpha,\xi}$ is a $\Por_{\alpha,\xi}$-name of $\Dor^{V_{\alpha,\xi}}$, and
      \item if $\xi\equiv1\modulo2$, $\Anm_\xi$ is a $\Por_{t(\xi),\xi}$-name for $\Aor^{V_{t(\xi),\xi}}$ and
            \[\Qnm_{\alpha,\xi}=\left\{
               \begin{array}{ll}
                   \mathds{1} & \textrm{if $\alpha\leq t(\xi)$,}\\
                   \Anm_\xi & \textrm{if $\alpha>t(\xi)$.}
               \end{array}
               \right.\]
   \end{itemize}
\end{proof}
\begin{theorem}\label{RightContLarge5}
   There exists a ccc poset that forces $\mathbf{GCH}_\lambda$, $\add(\Nwf)=\cof(\Mwf)$ $=\non(\Nwf)=\nu$ and
   $\cof(\Nwf)=\kappa$.
\end{theorem}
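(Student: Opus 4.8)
The proof follows the same template as Theorems \ref{RightContLarge1}–\ref{RightContLarge4}, so I would only indicate the construction and the points requiring attention. The plan is to perform a matrix iteration $\Por_{\kappa,\kappa\nu}=\langle\langle\Por_{\alpha,\xi},\Qnm_{\alpha,\xi}\rangle_{\xi<\kappa\nu}\rangle_{\alpha\leq\kappa}$ in which the columns split by residue mod $3$: for $\xi\equiv0\pmod 3$ put $\Qnm_{\alpha,\xi}$ a $\Por_{\alpha,\xi}$-name for $\Dor^{V_{\alpha,\xi}}$; for $\xi\equiv1\pmod 3$ put $\Qnm_{\alpha,\xi}$ a $\Por_{\alpha,\xi}$-name for $\Bor^{V_{\alpha,\xi}}$; and for $\xi\equiv2\pmod 3$ take $\Anm_\xi$ a $\Por_{t(\xi),\xi}$-name for $\Aor^{V_{t(\xi),\xi}}$ and set $\Qnm_{\alpha,\xi}=\mathds{1}$ if $\alpha\leq t(\xi)$ and $\Qnm_{\alpha,\xi}=\Anm_\xi$ if $\alpha>t(\xi)$. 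This is a legitimate instance of Context \ref{ContextMatrix}, so conditions (1)–(3) of Definition \ref{DefMatrixIt} are automatic.

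Next I would read off the cardinal values in an extension $V_{\kappa,\kappa\nu}$. Since $\Dor$ is $\sigma$-centered and subalgebras of $\Bor$ satisfy $(+_{\cdot,\eqcirc})$ (a consequence of $\Bor$ being $\omega^\omega$-bounding, hence adding no eventually-different real over the ground model reals), the hypothesis of Theorem \ref{PresvUnbdg} holds for $\sqsubset=\eqcirc$: the columns defined as (2)(i) are the full $\Dor$ columns, and those force $(+_{\Dnm_{\alpha,\xi},\eqcirc})$. Hence Corollary \ref{MainMatrixConsq} gives $\cov(\Mwf)\geq\kappa$ in $V_{\kappa,\kappa\nu}$. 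Then $\cof(\Mwf)\geq\cov(\Mwf)\geq\kappa$, and the $\kappa$-many random reals added through the $\xi\equiv1\pmod 3$ columns give $\non(\Nwf)\geq\kappa$; since $\cof(\Nwf)\leq\cfrak=\lambda$ one needs only $\cof(\Nwf)\leq\kappa$, which follows once $\cof(\Nwf)=\kappa$ is pinned down as in the earlier theorems. Meanwhile the $\nu$-cofinally many Hechler reals give $\dfrak\leq\nu$ and hence $\cof(\Mwf)\leq\dfrak$ is not available — rather $\add(\Nwf)\leq\nu$ and $\non(\Mwf)\leq\nu$ come from the Cohen reals added cofinally often (each Hechler and each amoeba forcing adds a Cohen real), and $\cof(\Mwf)\leq\nu$ requires $\dfrak\leq\nu$ and $\non(\Mwf)\leq\nu$, so I must verify $\dfrak\leq\nu$: the $\nu$-cofinal sequence of Hechler reals forms a dominating family of size $\nu$ in the final model (using Theorem \ref{realsmatrixbelow} to see any real appears at some bounded stage). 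For the lower bounds $\add(\Nwf)\geq\nu$ and $\cof(\Mwf)\geq\nu$, one uses the amoeba columns exactly as in Claim \ref{ClaimAmoeba}: each $\xi\equiv2\pmod 3$ contributes a Borel null set $N_\xi$ coded in $V_{t(\xi)+1,\xi+1}$ covering all Borel null sets coded in $V_{t(\xi),\xi}$, and the same argument shows any family of $<\nu$ Borel null sets in $V_{\kappa,\kappa\nu}$ is covered by some $N_\xi$, giving $\add(\Nwf)\geq\nu$; combined with $\non(\Mwf)\geq\add(\Nwf)$... here one must be careful, since in Cichon's diagram $\add(\Nwf)\leq\add(\Mwf)\leq\cov(\Mwf)$, not $\leq\non(\Mwf)$, so instead I would argue $\non(\Mwf)\geq\nu$ and $\cof(\Mwf)\geq\nu$ directly: $\cof(\Mwf)=\max\{\dfrak,\non(\Mwf)\}\geq\non(\Mwf)$, and $\non(\Mwf)\geq\add(\Nwf)$ does hold via $\add(\Nwf)\leq\add(\Mwf)\leq\cov(\Mwf)$ and $\add(\Nwf)\leq\non(\Mwf)$ is indeed an edge of Cichon's diagram, so $\non(\Mwf)\geq\nu$ follows from $\add(\Nwf)\geq\nu$, and thus $\cof(\Mwf)\geq\nu$; together with the upper bounds this yields $\add(\Nwf)=\cof(\Mwf)=\non(\Nwf)=\nu$ and $\cof(\Nwf)=\kappa$.

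The main obstacle is the bookkeeping that ensures the three families of small forcings (amoeba columns, random columns, Hechler columns) are interleaved densely enough that every relevant Borel object from the final model is captured at a bounded stage — this is what makes the $\leq$ bounds tight — together with verifying that the preservation hypotheses of Theorem \ref{PresvUnbdg} genuinely hold for the mixed iteration, i.e. that inserting random and amoeba columns does not destroy $(+_{\cdot,\eqcirc})$ along the horizontal iterations; this is exactly where Theorem \ref{suslinStar} (for the Suslin ccc $\Dor$ and $\Bor$ columns) and Lemma \ref{fixedStar} (for the amoeba columns, which are literally the same forcing on both sides above $t(\xi)$) are applied, as in the proof of Theorem \ref{PresvUnbdg}. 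The routine computations of $\mathbf{GCH}_\lambda$ and the exact continuum value $\cfrak=\lambda$ are identical to the previous proofs and I would simply say so.
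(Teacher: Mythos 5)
Your construction is exactly the paper's (full Hechler columns for $\xi\equiv0$, full random columns for $\xi\equiv1$, restricted amoeba columns for $\xi\equiv2\pmod 3$), but the verification breaks down at the crucial point, and in a way that contradicts the statement being proved. You invoke Theorem \ref{PresvUnbdg} and Corollary \ref{MainMatrixConsq} with $\sqsubset=\eqcirc$ to get $\cov(\Mwf)\geq\kappa$, and you also claim the random columns give $\non(\Nwf)\geq\kappa$. Both conclusions are false in this model: the theorem asserts $\cof(\Mwf)=\non(\Nwf)=\nu$, and since $\cov(\Mwf)\leq\dfrak\leq\cof(\Mwf)$, your own (correct) observation that the $\nu$-cofinally many Hechler reals give $\dfrak\leq\nu$ already forces $\cov(\Mwf)\leq\nu$, contradicting $\cov(\Mwf)\geq\kappa$ whenever $\nu<\kappa$. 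Moreover the hypothesis of Theorem \ref{PresvUnbdg} genuinely fails for $\eqcirc$: the full columns include Hechler forcing, and a dominating real $\dot d$ is eventually different from every old real, so no ground-model $f$ satisfies $\Vdash f\not\eqcirc\dot d$; hence $(+_{\Dor,\eqcirc})$ fails (your appeal to $\Bor$ being $\omega^\omega$-bounding addresses the wrong columns). Likewise $\pitchfork$ is unusable because a random real $\pitchfork$-dominates all old reals (Borel--Cantelli), which is precisely why $\non(\Nwf)$ stays at $\nu$: the $\nu$-cofinally many random reals themselves form a non-null set of size $\nu$, giving $\non(\Nwf)\leq\nu$, with $\non(\Nwf)\geq\nu$ coming from $\add(\Nwf)\geq\nu$.

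The one place where the matrix machinery is actually needed, and the relation that works, is $\sqsubset=\subseteq^*$: Hechler is $\sigma$-centered (Lemma \ref{centeredAddNull}) and random carries a strictly positive finitely additive measure (Lemma \ref{spamPlus}), so every full column satisfies $(+_{\cdot,\subseteq^*})$, Theorem \ref{PresvUnbdg} applies, and Corollary \ref{MainMatrixConsq} yields $\cof(\Nwf)=\dfrak_{\subseteq^*}\geq\kappa$ --- a bound you never establish (your remark that $\cof(\Nwf)\leq\kappa$ follows once $\cof(\Nwf)=\kappa$ is pinned down is circular). For the matching upper bound, the null sets $N_\xi$ added by the amoeba columns $\xi\equiv2\pmod 3$ form, by the argument of Claim \ref{ClaimAmoeba} together with Theorem \ref{realsmatrixbelow}, a cofinal family in $\Nwf$ of size $\kappa$, which gives both $\add(\Nwf)\geq\nu$ (as you say) and $\cof(\Nwf)\leq\kappa$. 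The remaining values then read off as you indicate: $\dfrak,\non(\Mwf)\leq\nu$ from the cofinal Hechler and Cohen reals, hence $\cof(\Mwf)\leq\nu$, and all lower bounds $\geq\nu$ follow from $\add(\Nwf)\geq\nu$. This is the ``similar fashion'' the paper intends, modeled on Theorems \ref{RightContLarge3} and \ref{RightContLarge4}, where the choice of $\sqsubset$ is dictated by which Suslin posets occupy the full columns.
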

\begin{proof}
   Perform a matrix iteration $\Por_{\kappa,\kappa\nu}=
   \langle\langle\Por_{\alpha,\xi},\Qnm_{\alpha,\xi}\rangle_{\xi<\kappa\nu}\rangle_{\alpha\leq\kappa}$ such that, for a fixed $\xi<\kappa\nu$,
   \begin{itemize}
      \item if $\xi\equiv0\modulo3$, $\Qnm_{\alpha,\xi}$ is a $\Por_{\alpha,\xi}$-name of $\Dor^{V_{\alpha,\xi}}$,
      \item if $\xi\equiv1\modulo3$, $\Qnm_{\alpha,\xi}$ is a $\Por_{\alpha,\xi}$-name of $\Bor^{V_{\alpha,\xi}}$, and
      \item if $\xi\equiv2\modulo3$, $\Anm_\xi$ is a $\Por_{t(\xi),\xi}$-name for $\Aor^{V_{t(\xi),\xi}}$ and
            \[\Qnm_{\alpha,\xi}=\left\{
               \begin{array}{ll}
                   \mathds{1} & \textrm{if $\alpha\leq t(\xi)$,}\\
                   \Anm_\xi & \textrm{if $\alpha>t(\xi)$.}
               \end{array}
               \right.\]
   \end{itemize}
\end{proof}

\subsection{Models with $\cof(\Nwf)$ large}\label{SubsecCofNLarge}

Assume in $V$ that $\cf(\lambda)\geq\mu_1$ and that the conclusions of Theorem \ref{LeftCichonCovMLarge} hold with $\mu_2=\mu_3=\mu_1$.
\begin{theorem}\label{RightCofNLarge1}
   There exists a ccc poset that forces $\mathbf{GCH}_\lambda$, $\add(\Nwf)=\mu_1$, $\cov(\Nwf)=\bfrak=\non(\Mwf)=\nu$,
   $\cov(\Mwf)=\dfrak=\non(\Nwf)=\kappa$ and $\cof(\Nwf)=\cfrak=\lambda$.
\end{theorem}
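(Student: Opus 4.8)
The plan is to construct a matrix iteration $\Por_{\kappa,\kappa\nu}$ over the ground model $V$ (which already satisfies the conclusions of Theorem~\ref{LeftCichonCovMLarge} with $\mu_2=\mu_3=\mu_1$, hence $\add(\Nwf)=\mu_1$, $\cov(\Mwf)=\cfrak=\lambda$, and the witnessing $\blacktriangle$-properties for the three relations $\subseteq^*,\pitchfork,<^*$ with parameter $\mu_1$), following the recipe of Context~\ref{ContextMatrix}. The horizontal iterations should interleave, along the ordinal $\kappa\nu$, four kinds of forcings distinguished modulo~$4$: eventually different real forcing $\Eor$ (to push $\non(\Mwf)$ up and $\cov(\Mwf),\dfrak$ up via the Cohen reals from the vertical iteration), random forcing $\Bor$ (to make $\cov(\Nwf)$ and $\bfrak$ large, and to help $\non(\Nwf)$), full amoeba forcing $\Aor^{V_{t(\xi),\xi}}$ in the ``staircase'' pattern governed by $t$ and a cut at $t(\xi)$ (to control $\cof(\Nwf)$ from above and keep $\add(\Nwf)=\mu_1$ below), and subalgebras of $\Aor$ of size $<\mu_1$ inserted by book-keeping (so that any $<\mu_1$ Borel null sets coded in $V_{\kappa,\kappa\nu}$ get covered, keeping $\add(\Nwf)\geq\mu_1$).

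First I would verify that this matrix iteration satisfies the hypothesis of Theorem~\ref{PresvUnbdg} simultaneously for $\sqsubset$ being $\eqcirc$, $\pitchfork$, and $<^*$: the columns defined fully by a Suslin ccc poset are $\Eor$, $\Bor$, or $\Aor$, and by Lemmas~\ref{EvDiffPlus},~\ref{spamPlus},~\ref{centeredFork},~\ref{centeredAddNull} (together with the fact that $\Eor$ satisfies $(+_{\cdot,\pitchfork})$ and $(+_{\cdot,\subseteq^*})$, since it is not needed there—only $<^*$—so actually one only needs each relation against the columns where it matters) these verifications go through. Applying Corollary~\ref{MainMatrixConsq} (and its $\pitchfork$-variant giving $\non(\Nwf)$ in place of $\dfrak_\pitchfork$) with $\delta=\kappa$ of uncountable cofinality yields $\kappa\leq\cov(\Mwf)$, $\kappa\leq\dfrak$, and $\kappa\leq\non(\Nwf)$ in $V_{\kappa,\kappa\nu}$. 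The $\nu$-cofinally many Cohen and eventually different reals added along the columns give the upper bounds $\cov(\Nwf)\leq\nu$, $\bfrak\leq\non(\Mwf)\leq\nu$; for the matching lower bound $\nu\leq\cov(\Nwf)$ one uses the random reals added cofinally in $\nu$-blocks together with the preservation of $(\blacktriangle,\pitchfork,B,\mu_1)$, or more directly a reflection/Claim~\ref{ClaimAmoeba}-style argument. Here $\bfrak\geq\nu$ follows since each random real is unbounded and the columns are cofinal in $\nu$; and $\non(\Mwf)\geq\nu$ from $\bfrak\leq\non(\Mwf)$, actually $\non(\Mwf)\leq\nu$ from Cohen reals and $\geq\nu$ from $\cov(\Nwf)\leq\non(\Mwf)$ via Cichon's diagram and $\cov(\Nwf)\geq\nu$.

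Next I would pin down the two cardinals at the extremes. For $\cof(\Nwf)=\lambda$: the lower bound $\cof(\Nwf)\geq\lambda$ comes from $\cof(\Nwf)\geq\cov(\Mwf)$ being false—rather, from $\cof(\Nwf)\geq\cf(\lambda)=\cfrak=\lambda$ is automatic since $\cof(\Nwf)\geq\cfrak$? No: $\cof(\Nwf)\leq\cfrak$ always. So $\cof(\Nwf)=\lambda$ reduces to preserving $\cfrak=\lambda$ (inherited from $V$ via a product of $\lambda$-many of the small subalgebra/Cohen-type stages, exactly as in Theorem~\ref{LeftCichonCovMLarge}) and noting $\cof(\Nwf)\leq\cfrak=\lambda$ while $\cof(\Nwf)\geq\non(\Nwf)\geq\kappa$—to get the full value $\lambda$ one must argue $\cof(\Nwf)\geq\lambda$, which should follow from $\cof(\Nwf)\geq\cf(\lambda)$ combined with the cofinal-in-$\lambda$ pattern of random/Cohen reals forcing $\cof(\Nwf)$ up at unboundedly many intermediate stages and preserving it, mirroring the ``$\cov(\Mwf)\geq\lambda$'' argument at the end of Theorem~\ref{LeftCichonCovMLarge}. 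For $\add(\Nwf)=\mu_1$: the $\leq$ direction is preservation of $(\blacktriangle,\subseteq^*,A,\mu_1)$ through the whole matrix (every column satisfies $(+^{\mu_1}_{\cdot,\subseteq^*})$: $\Eor$ by a $\sigma$-centeredness-type argument or directly, $\Bor$ by Lemma~\ref{spamPlus}, $\Aor$ and its subalgebras by Lemma~\ref{centeredAddNull} applied to $\sigma$-linked/centered pieces—this is the point requiring care), and Lemma~\ref{TriangleImpl}; the $\geq$ direction is the book-keeping argument exactly as in Theorem~\ref{LeftCichonCovMLarge}. The main obstacle will be confirming that \emph{all} columns—in particular the fully-iterated amoeba columns and the eventually different real columns—preserve $(+^{\mu_1}_{\cdot,\subseteq^*})$ and $(+^{\mu_2}_{\cdot,\pitchfork})$ (here $\mu_2=\mu_1$), since $\Aor$ and $\Eor$ are not $\sigma$-centered; one resolves this by noting $\Aor$ is $\sigma$-linked hence the relevant $(+^\kappa)$-properties still hold for $\kappa>\aleph_1$ (Lemmas~\ref{centeredFork},~\ref{centeredAddNull} as stated require $\mu$-centered, so one instead invokes Lemma~\ref{spamPlus} for $\Bor$ and handles $\Aor$, $\Eor$ by the finite-conditions structure, or restricts the amoeba/eventually-different columns to the regime where the $\blacktriangle$ with parameter $\mu_1$ need not be preserved past them). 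Once these preservation facts are in hand, assembling the seven cardinal values $\add(\Nwf)=\mu_1$, $\cov(\Nwf)=\bfrak=\non(\Mwf)=\nu$, $\cov(\Mwf)=\dfrak=\non(\Nwf)=\kappa$, $\cof(\Nwf)=\cfrak=\lambda$ is then routine bookkeeping against Cichon's diagram inequalities.
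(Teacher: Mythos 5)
Your construction does not prove this theorem; the choice of iterands is the wrong one for these target values. The decisive point of Theorem~\ref{RightCofNLarge1} is that the forcings responsible for the $\nu$-level invariants must be added only over the lower-left part of the matrix, i.e.\ as restricted posets of the form $\Bor^{V_{t(\rho),\xi}}$, $\Dor^{V_{t(\rho),\xi}}$ placed above the cut $t(\rho)$ (case (2)(ii) of Context~\ref{ContextMatrix}), never as full Suslin columns. Your full $\Eor$ columns occur cofinally in your horizontal length $\kappa\nu$ (which has cofinality $\nu$), and each such generic is eventually different from \emph{all} reals of $V_{\kappa,\xi}$; since every real of the final model appears in some $V_{\kappa,\xi}$, the final reals are covered by $\nu$ many meager sets of the form $(\eqcirc)^{e}$, forcing $\cov(\Mwf)\leq\nu$, which contradicts the target $\cov(\Mwf)=\kappa$ whenever $\nu<\kappa$ (moreover $(+_{\Eor,\eqcirc})$ fails, so Theorem~\ref{PresvUnbdg} and Corollary~\ref{MainMatrixConsq} cannot even be invoked for $\eqcirc$). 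Full random columns likewise give a non-null set of $\nu$ many random reals over the whole top row, so $\non(\Nwf)\leq\nu<\kappa$. The full-amoeba staircase $\Aor^{V_{t(\xi),\xi}}$ is the engine of Theorem~\ref{RightContLarge1}, not of this one: by the Claim~\ref{ClaimAmoeba} argument it forces $\add(\Nwf)\geq\nu$ and $\cof(\Nwf)\leq\kappa$, contradicting $\add(\Nwf)=\mu_1$ and $\cof(\Nwf)=\lambda$ when $\mu_1<\nu$ or $\kappa<\lambda$; and your proposed repair via $\sigma$-linkedness is unsound, since $(+_{\Aor,\subseteq^*})$ is simply false ($\Aor$ adds a slalom localizing all ground-model reals, destroying $(\blacktriangle,\subseteq^*,A,\mu_1)$). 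Finally, your scheme contains no Hechler (or other dominating-real) stages except the problematic amoeba, so $\bfrak\geq\nu$ and $\dfrak\leq\kappa$ are not accounted for at all.

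The paper's proof instead takes horizontal length $\lambda\kappa\nu$, organized in blocks of length $\lambda$ indexed by $\rho<\kappa\nu$: at $\xi=\lambda\rho$ a restricted random algebra $\Bor^{V_{t(\rho),\xi}}$ above the cut $t(\rho)$, at $\xi=\lambda\rho+1$ a restricted Hechler $\Dor^{V_{t(\rho),\xi}}$ above the same cut, and then $\lambda$-many subalgebras of $\Aor$ of size $<\mu_1$ handled by bookkeeping with cuts at $(g(\epsilon))_0$. Since no column is a full Suslin poset, the Cohen reals of the vertical iteration remain $\eqcirc$-unbounded with no hypothesis to check, and Corollary~\ref{MainMatrixConsq} gives $\cov(\Mwf)\geq\kappa$; the reals $b_\rho,d_\rho$ are random/dominating only over $V_{t(\rho),\cdot}$, which is exactly what yields $\cov(\Nwf),\bfrak\geq\nu$ (Claims~\ref{claimRandom} and~\ref{claimDom}) while the families $\{b_\rho\},\{d_\rho\}$ of size $\kappa$ give $\non(\Nwf),\dfrak\leq\kappa$. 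Every iterand is small, a subalgebra of random, or $\sigma$-centered, so $(+^{\mu_1}_{\cdot,\subseteq^*})$ holds throughout, giving $\add(\Nwf)\leq\mu_1$ and, by Lemma~\ref{PreserTriangle}, $\cof(\Nwf)\geq\lambda$ directly from the ground model's $\cof(\Nwf)=\lambda$ (your detour through $\cfrak$ and the ``$\cov(\Mwf)\geq\lambda$''-style argument is unnecessary and, as written, confused). Note also that the $\lambda$-length blocks are needed for the bookkeeping that yields $\add(\Nwf)\geq\mu_1$: with only $\kappa\nu$ coordinates one cannot capture all $<\mu_1$-sized families of null sets when $\lambda>\kappa$. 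Your modulo-pattern with full $\Eor$ resembles Theorem~\ref{RightCofNLarge2} (where $\cov(\Mwf)=\nu$ is the goal), but even there the random and Hechler stages are restricted to the cut, never full.
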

\begin{proof}
  Perform a matrix iteration $\Por_{\kappa,\lambda\kappa\nu}=
  \langle\langle\Por_{\alpha,\xi},\Qnm_{\alpha,\xi}\rangle_{\xi<\lambda\kappa\nu}\rangle_{\alpha\leq\kappa}$ according to the following cases
  for $\rho<\kappa\nu$.
  \begin{enumerate}[(i)]
     \item If $\xi=\lambda\rho$, $\Bnm_\xi$ is a $\Por_{t(\rho),\xi}$-name for $\Bor^{V_{t(\rho),\xi}}$ and
            \[\Qnm_{\alpha,\xi}=\left\{
               \begin{array}{ll}
                   \mathds{1} & \textrm{if $\alpha\leq t(\rho)$,}\\
                   \Bnm_\xi & \textrm{if $\alpha>t(\rho)$.}
               \end{array}
               \right.\]
     \item If $\xi=\lambda\rho+1$, $\Dnm_\xi$ is a $\Por_{t(\rho),\xi}$-name for $\Dor^{V_{t(\rho),\xi}}$ and
            \[\Qnm_{\alpha,\xi}=\left\{
               \begin{array}{ll}
                   \mathds{1} & \textrm{if $\alpha\leq t(\rho)$,}\\
                   \Dnm_\xi & \textrm{if $\alpha>t(\rho)$.}
               \end{array}
               \right.\]
  \end{enumerate}
  To conclude the construction of the matrix iteration, fix, for each $\alpha\leq\kappa$, a sequence
  $\langle\Anm_{\alpha,\gamma}^\rho\rangle_{\gamma<\lambda}$ of $\Por_{\alpha,\lambda\rho+2}$-names for \emph{all}
  suborders of $\Aor^{V_{\alpha,\lambda\rho+2}}$ of size $<\mu_1$.
  \begin{enumerate}[(i)]
  \setcounter{enumi}{2}
     \item If $\xi=\lambda\rho+2+\epsilon$ ($\epsilon<\lambda$), put
           \[\Qnm_{\alpha,\xi}=\left\{
                            \begin{array}{ll}
                               \mathds{1}, & \textrm{if $\alpha\leq(g(\epsilon))_0$,}\\
                               \Anm^\rho_{g(\epsilon)}, & \textrm{if $\alpha>(g(\epsilon))_0$.}
                            \end{array}
                     \right.\]
  \end{enumerate}
  By Lemmas \ref{smallPlus}, \ref{centeredAddNull} and \ref{spamPlus} and Theorem \ref{preservPlus}, the matrix iteration satisfies
  $(+_{\cdot,\subseteq^*}^{\mu_1})$.
  Work in an extension $V_{\kappa,\lambda\kappa\nu}$ of the matrix iteration. From the part (iii) of the construction, using the same argument
  as in the proofs of section \ref{SecModelLeft} and Theorem \ref{realsmatrixbelow}, $\add(\Nwf)=\mu_1$. $\cof(\Nwf)\geq\lambda$ is given by Lemma \ref{PreserTriangle} and \lcom$(+_{\cdot,\subseteq^*}^{\mu_1})$
  and $\cof(\Nwf)=\lambda$\rcom\ in the ground model. $\cfrak\leq\lambda$ is clear. Because
  of the $\nu$-cofinally many Cohen reals, $\non(\Mwf)\leq\nu$ and, by Corollary \ref{MainMatrixConsq} with $\sqsubset=\eqcirc$,
  $\cov(\Mwf)\geq\kappa$.\\
  We need to get $\dfrak,\non(\Nwf)\leq\kappa$ and $\bfrak,\cov(\Nwf)\geq\nu$. Note that, for each
  $\rho<\kappa\nu$, $\Bor_\rho:=\Bnm_{t(\rho),\lambda\rho}(G)$ adds a random real $b_\rho\in V_{t(\rho)+1,\lambda\rho+1}$
  over $V_{t(\rho),\lambda\rho}$ and $\Dor_\rho:=\Dnm_{t(\rho),\lambda\rho+1}(G)$ adds a dominating real
  $d_\rho\in V_{t(\rho)+1,\lambda\rho+2}$ over $V_{t(\rho),\lambda\rho+1}$.
  To finish the proof, it is enough to prove the two results ahead.
  \begin{clm}\label{claimRandom}
     For every family of Borel null sets in $V_{\kappa,\lambda\kappa\nu}$ of size $<\nu$, there is a $b_\rho$ that is not in
     its union.
  \end{clm}
  \begin{proof}
     Let $\Bwf$ be such a family. Theorem \ref{realsmatrixbelow} implies that all the sets in $\Bwf$ are coded in $V_{\alpha,\lambda\delta}$ for
     some $\alpha<\kappa$ and $\delta<\kappa\nu$. Then, there exists a $\rho\in(\delta,\kappa\nu)$ such that
     $t(\rho)=\alpha$. Then, $b_\rho$ is such a real.
  \end{proof}
  With a similar argument, we can prove
  \begin{clm}\label{claimDom}
     Every family of reals in $V_{\kappa,\lambda\kappa\nu}$ of size $<\nu$ is dominated by some $d_\rho$.
  \end{clm}
\end{proof}
\begin{theorem}\label{RightCofNLarge2}
   There exists a ccc poset that forces $\mathbf{GCH}_\lambda$, $\add(\Nwf)=\mu_1$, $\cov(\Nwf)=\bfrak=\non(\Mwf)=\cov(\Mwf)=\nu$,
   $\dfrak=\non(\Nwf)=\kappa$ and $\cof(\Nwf)=\cfrak=\lambda$.
\end{theorem}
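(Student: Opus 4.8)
The plan is to run essentially the same argument as for Theorem \ref{RightCofNLarge1}, modifying the matrix iteration so that $\cov(\Mwf)$ comes out equal to $\nu$ rather than $\kappa$; this is the same adjustment that turns Theorem \ref{RightContLarge1} into Theorem \ref{RightContLarge2}. First I would define a matrix iteration $\Por_{\kappa,\lambda\kappa\nu}=\langle\langle\Por_{\alpha,\xi},\Qnm_{\alpha,\xi}\rangle_{\xi<\lambda\kappa\nu}\rangle_{\alpha\leq\kappa}$ in the sense of Context \ref{ContextMatrix}, keeping the three kinds of columns used in Theorem \ref{RightCofNLarge1} --- the random subalgebra $\Bnm_\xi$ restricted below level $t(\rho)$, the Hechler subalgebra $\Dnm_\xi$ restricted below level $t(\rho)$, and, distributed by book-keeping, the names $\Anm^\rho_{g(\epsilon)}$ for all subalgebras of the amoeba algebra of size $<\mu_1$ --- but inserting one further column in each length-$\lambda$ block (so $\kappa\nu$-many extra columns, which are cofinal in the horizontal length $\lambda\kappa\nu$, of cofinality $\nu$) of type $(2)(i)$ of Context \ref{ContextMatrix} with $\Sor_\xi=\Eor$, i.e.\ a column that fully iterates the eventually different forcing. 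By Lemmas \ref{smallPlus}, \ref{centeredAddNull}, \ref{spamPlus}, Theorem \ref{preservPlus}, and the fact that $\Eor$ is $\sigma$-centered, the whole iteration still satisfies $(+^{\mu_1}_{\cdot,\subseteq^*})$.

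For the invariants, almost everything is parallel to Theorem \ref{RightCofNLarge1}. The small amoeba subalgebras, together with the preservation of $(\blacktriangle,\subseteq^*,A,\mu_1)$ (for the witness $A$ coming from $V$), give $\add(\Nwf)=\mu_1$; Lemma \ref{PreserTriangle} applied with \lcom$(+^{\mu_1}_{\cdot,\subseteq^*})$ and $\cof(\Nwf)=\lambda$\rcom\ holding in $V$ gives $\cof(\Nwf)\geq\lambda$, and $\cfrak\leq\lambda$ is clear from the length. Since $\Eor$ satisfies $(+_{\Eor,<^*})$ (Lemma \ref{EvDiffPlus}) and $(+_{\Eor,\pitchfork})$ ($\sigma$-centered, Lemma \ref{centeredFork}), the hypothesis of Theorem \ref{PresvUnbdg} holds for $\sqsubset\in\{<^*,\pitchfork\}$ --- for the type-$(2)(ii)$ columns the only step that is not immediate is at level $\alpha=t(\rho)$, and there $(\star,\mathds{1},\Qnm_{\alpha+1,\xi},V_{\alpha,\xi},V_{\alpha+1,\xi},\sqsubset,\dot c_\alpha)$ reduces to the absoluteness of \lcom$c\not\sqsubset h$\rcom\ for fixed reals --- so Corollary \ref{MainMatrixConsq}, and the remark following it, give $\dfrak\geq\kappa$ and $\non(\Nwf)\geq\kappa$, while arguments as in Claims \ref{claimDom} and \ref{claimRandom} (using the Hechler reals $d_\rho$ and the random reals $b_\rho$) give $\bfrak\geq\nu$, $\cov(\Nwf)\geq\nu$ and a dominating family of size $\kappa$, hence $\dfrak\leq\kappa$; the complementary bounds $\bfrak,\cov(\Nwf)\leq\non(\Mwf)$ (Cichon's diagram), $\non(\Mwf)\leq\nu$ and $\non(\Nwf)\leq\kappa$ are obtained as in Theorem \ref{RightCofNLarge1}. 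The genuinely new point is $\cov(\Mwf)=\nu$: the lower bound is automatic because the iteration is an fsi whose length has cofinality $\nu$, so Cohen reals are added cofinally; for the upper bound, at the top level $\alpha=\kappa$ each $\Eor$-column contributes an eventually different real over the corresponding $V_{\kappa,\xi}$, and since the coding stages in $V_{\kappa,\lambda\kappa\nu}$ are cofinal of cofinality $\nu$, any $\nu$-sized cofinal subfamily of these reals is already $\eqcirc$-dominating, whence $\cov(\Mwf)=\dfrak_\eqcirc\leq\nu$.

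The step I expect to be the main obstacle is checking that inserting the $\Eor$-columns does not push $\non(\Mwf)$ (and therefore $\bfrak$) above $\nu$: an eventually different real over a model is exactly the kind of real that makes a previously non-meager $\nu$-sized set $\eqcirc$-bounded, hence meager, so the non-meager witness of size $\nu$ for $\non(\Mwf)\leq\nu$ has to be chosen to meet the horizontal iteration cofinally, or else $\non(\Mwf)\leq\nu$ has to be read off from a $(+^\nu_{\cdot,\eqcirc})$-type analysis of the part of the iteration that omits the $\Eor$-columns (using that $\Eor$ is the only kind of column failing that property and that it occurs only $\nu$-cofinally). Verifying this, together with the fact that the placement of the $\Eor$-columns leaves $\add(\Nwf)$, $\cof(\Nwf)$, $\dfrak$ and $\non(\Nwf)$ unchanged, is the bookkeeping one must get right; the rest of the proof runs parallel to Theorems \ref{RightCofNLarge1} and \ref{RightContLarge2}.
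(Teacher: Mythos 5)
Your proposal is correct and matches the paper's own proof: the paper likewise takes the iteration of Theorem \ref{RightCofNLarge1} and adds a full $\Eor$-column at the start of each length-$\lambda$ block (at $\xi=\lambda\rho$), then gets $\dfrak,\non(\Nwf)\geq\kappa$ from Corollary \ref{MainMatrixConsq} with $\sqsubset\in\{<^*,\pitchfork\}$ (using $(+_{\Eor,<^*})$ and $\sigma$-centeredness for $(+_{\Eor,\pitchfork})$), and reads off $\non(\Mwf)=\cov(\Mwf)=\nu$ from the $\nu$-cofinally many Cohen and eventually different reals, exactly as you do. The ``obstacle'' you flag is handled the same way you resolve it (cofinal choice of the witnessing families), so there is no gap.
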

\begin{proof}
  Perform a matrix iteration $\Por_{\kappa,\lambda\kappa\nu}=
  \langle\langle\Por_{\alpha,\xi},\Qnm_{\alpha,\xi}\rangle_{\xi<\lambda\kappa\nu}\rangle_{\alpha\leq\kappa}$ according to the following cases
  for $\rho<\kappa\nu$.
  \begin{enumerate}[(i)]
     \item If $\xi=\lambda\rho$, $\Qnm_{\alpha,\xi}$ is a $\Por_{\alpha,\xi}$-name for $\Eor$.
     \item If $\xi=\lambda\rho+1$, $\Bnm_\xi$ is a $\Por_{t(\rho),\xi}$-name for $\Bor^{V_{t(\rho),\xi}}$ and
            \[\Qnm_{\alpha,\xi}=\left\{
               \begin{array}{ll}
                   \mathds{1} & \textrm{if $\alpha\leq t(\rho)$,}\\
                   \Bnm_\xi & \textrm{if $\alpha>t(\rho)$.}
               \end{array}
               \right.\]
     \item If $\xi=\lambda\rho+2$, $\Dnm_\xi$ is a $\Por_{t(\rho),\xi}$-name for $\Dor^{V_{t(\rho),\xi}}$ and
            \[\Qnm_{\alpha,\xi}=\left\{
               \begin{array}{ll}
                   \mathds{1} & \textrm{if $\alpha\leq t(\rho)$,}\\
                   \Dnm_\xi & \textrm{if $\alpha>t(\rho)$.}
               \end{array}
               \right.\]
  \end{enumerate}
  Like in the previous proof, fix, for each $\alpha\leq\kappa$, a sequence
  $\langle\Anm_{\alpha,\gamma}^\rho\rangle_{\gamma<\lambda}$ of $\Por_{\alpha,\lambda\rho+3}$-names for \emph{all}
  suborders of $\Aor^{V_{\alpha,\lambda\rho+3}}$ of size $<\mu_1$.
  \begin{enumerate}[(i)]
  \setcounter{enumi}{3}
     \item If $\xi=\lambda\rho+3+\epsilon$ ($\epsilon<\lambda$), put
           \[\Qnm_{\alpha,\xi}=\left\{
                            \begin{array}{ll}
                               \mathds{1}, & \textrm{if $\alpha\leq(g(\epsilon))_0$,}\\
                               \Anm^\rho_{g(\epsilon)}, & \textrm{if $\alpha>(g(\epsilon))_0$.}
                            \end{array}
                     \right.\]
  \end{enumerate}
  With the same argument as in Theorem \ref{RightCofNLarge1}, in an extension $V_{\kappa,\lambda\kappa\nu}$ we get that
  $\cof(\Nwf)=\cfrak=\lambda$, $\add(\Nwf)=\mu_1$, $\cov(\Nwf),\bfrak\geq\nu$ and $\dfrak=\non(\Nwf)=\kappa$,
  but for the last we consider Corollary \ref{MainMatrixConsq}
  for the cases when $\sqsubset$ is $\pitchfork$ or $<^*$. The $\nu$-cofinally many Cohen and eventually different reals
  yield $\non(\Mwf)=\cov(\Mwf)=\nu$.
\end{proof}
\begin{theorem}\label{RightCofNLarge3}
   There exists a ccc poset that forces $\mathbf{GCH}_\lambda$, $\add(\Nwf)=\mu_1$, $\cov(\Nwf)=\add(\Mwf)=\cof(\Mwf)=\nu$,
   $\non(\Nwf)=\kappa$ and $\cof(\Nwf)=\cfrak=\lambda$.
\end{theorem}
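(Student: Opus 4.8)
The plan is to adapt the proof of Theorem~\ref{RightCofNLarge2}, replacing the full columns of eventually different forcing there by full columns of Hechler forcing. Working in the model $V$ of this subsection, perform a matrix iteration $\Por_{\kappa,\lambda\kappa\nu}=\langle\langle\Por_{\alpha,\xi},\Qnm_{\alpha,\xi}\rangle_{\xi<\lambda\kappa\nu}\rangle_{\alpha\leq\kappa}$ (under the conventions of Context~\ref{ContextMatrix}) so that, for $\rho<\kappa\nu$:
\begin{itemize}
   \item if $\xi=\lambda\rho$, then $\Qnm_{\alpha,\xi}$ is a $\Por_{\alpha,\xi}$-name for $\Dor$ (a full Hechler column);
   \item if $\xi=\lambda\rho+1$, then $\Bnm_\xi$ is a $\Por_{t(\rho),\xi}$-name for $\Bor^{V_{t(\rho),\xi}}$ and $\Qnm_{\alpha,\xi}=\mathds{1}$ for $\alpha\leq t(\rho)$, $\Qnm_{\alpha,\xi}=\Bnm_\xi$ for $\alpha>t(\rho)$;
   \item if $\xi=\lambda\rho+2+\epsilon$ with $\epsilon<\lambda$, then $\Qnm_{\alpha,\xi}$ is the book-kept small amoeba step $\Anm^\rho_{g(\epsilon)}$, defined exactly as in part~(iii) of the proof of Theorem~\ref{RightCofNLarge1}, so that every suborder of $\Aor^{V_{\alpha,\lambda\rho+2}}$ of size $<\mu_1$ is used at some stage.
\end{itemize}
Since $\Dor$ is $\sigma$-centered, Lemmas~\ref{smallPlus}, \ref{centeredAddNull}, \ref{spamPlus} together with Theorem~\ref{preservPlus} show that the iteration satisfies $(+^{\mu_1}_{\cdot,\subseteq^*})$, and the only full columns (the Hechler ones) satisfy $(+_{\Dor,\pitchfork})$ by Lemma~\ref{centeredFork}. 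In contrast with Theorem~\ref{RightCofNLarge2}, however, $(+_{\Dor,{<^*}})$ fails, so the hypothesis of Theorem~\ref{PresvUnbdg} for $\sqsubset={<^*}$ is not met and we obtain no lower bound $\kappa\leq\dfrak$ --- this is exactly what will make room for $\dfrak=\nu$.

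From $(+^{\mu_1}_{\cdot,\subseteq^*})$, the arguments of Section~\ref{SecModelLeft} and of Theorem~\ref{RightCofNLarge1} preserve $(\blacktriangle,\subseteq^*,A,\mu_1)$ and the statement \lcom$\dfrak_{\subseteq^*}\geq\lambda$\rcom\ (which holds in $V$), giving $\add(\Nwf)=\mu_1$ and $\cof(\Nwf)=\cfrak=\lambda$; $\mathbf{GCH}_\lambda$ is routine. By Theorem~\ref{PresvUnbdg} with $\sqsubset=\pitchfork$, $\dot{c}_\alpha$ stays $\pitchfork$-unbounded over $V_{\alpha,\lambda\kappa\nu}$, so Corollary~\ref{MainMatrixConsq} and Lemma~\ref{InvforPitchfork} give $\kappa\leq\non(\Nwf)$. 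A claim analogous to Claim~\ref{claimRandom} shows every family of $<\nu$ Borel null sets coded in $V_{\kappa,\lambda\kappa\nu}$ is avoided by some random real $b_\rho$ (added at the column $\lambda\rho+1$); applied to a single set this shows $\{b_\rho:\rho<\kappa\nu\}$ is non-null, so $\non(\Nwf)=\kappa$, and applied to $<\nu$ sets it gives $\cov(\Nwf)\geq\nu$, with equality once $\non(\Mwf)\leq\nu$ is known (recall $\cov(\Nwf)\leq\non(\Mwf)$).

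The main obstacle is to force the four middle invariants $\bfrak,\cov(\Mwf),\non(\Mwf),\dfrak$ simultaneously to $\nu$, i.e.\ $\add(\Mwf)=\cof(\Mwf)=\nu$. The key is that each full Hechler column $\xi=\lambda\rho$ acts at the top row too, adding a dominating real $d_\rho$ and a Cohen real $c_\rho$ over $V_{\kappa,\lambda\rho}$; since the top-row iteration has length $\lambda\kappa\nu$ of cofinality $\nu$, fixing a cofinal $C\subseteq\kappa\nu$ of order type $\nu$ one checks that $\{d_\rho:\rho\in C\}$ is both $<^*$-dominating and (a Hechler real being eventually different from everything over which it is generic) $\eqcirc$-dominating, while $\{c_\rho:\rho\in C\}$ is non-meager; this yields $\dfrak\leq\nu$, $\cov(\Mwf)\leq\nu$ and $\non(\Mwf)\leq\nu$. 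For the reverse inequalities, any $<\nu$ reals, resp.\ meager Borel sets, of $V_{\kappa,\lambda\kappa\nu}$ already lie in some $V_{\alpha_0,\xi_0}$ with $\alpha_0<\kappa$ and $\xi_0<\lambda\kappa\nu$ (by Theorem~\ref{realsmatrixbelow} and $\cf(\lambda\kappa\nu)=\nu$), and a later full Hechler column dominates them at row $\alpha_0$, resp.\ provides a Cohen real over $V_{\kappa,\lambda\rho}$ escaping them; hence $\bfrak\geq\nu$ and $\cov(\Mwf)\geq\nu$, and together with $\bfrak\leq\non(\Mwf)$, $\cov(\Mwf)\leq\dfrak$ and the upper bounds above, all four invariants equal $\nu$.

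The remaining points are routine but need care: that the required \lcom later full Hechler column\rcom\ (both beyond a given column and at a prescribed row) always exists --- handled by the block structure of length $\lambda$ indexed by $\kappa\nu$, the function $t$, and the cofinal thread $C$, exactly as in Theorems~\ref{RightContLarge1} and~\ref{RightCofNLarge1}; and that $\add(\Nwf)\geq\mu_1$, because any $<\mu_1$ Borel null sets are captured by one of the listed small amoeba subalgebras, as in the proof of Theorem~\ref{RightCofNLarge1}. Everything else reduces, mutatis mutandis, to the computations in Theorems~\ref{RightCofNLarge1} and~\ref{RightCofNLarge2}.
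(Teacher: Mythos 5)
Your construction is exactly the paper's: full Hechler columns at $\xi=\lambda\rho$, the restricted random algebra $\Bor^{V_{t(\rho),\xi}}$ at $\xi=\lambda\rho+1$, and the book-kept small amoeba suborders at $\xi=\lambda\rho+2+\epsilon$, verified via $(+^{\mu_1}_{\cdot,\subseteq^*})$, Theorem \ref{PresvUnbdg}/Corollary \ref{MainMatrixConsq} for $\pitchfork$, and the analogue of Claim \ref{claimRandom}, which is precisely what the paper's terse proof (\lcom use Claim \ref{claimRandom} and the argument of the previous results\rcom) intends. Your filled-in details (the $\sigma$-centeredness of $\Dor$ for $(+_{\cdot,\pitchfork})$, the failure of $(+_{\Dor,<^*})$ making room for $\dfrak=\nu$, and the cofinal Hechler/Cohen reals pinning $\add(\Mwf)=\cof(\Mwf)=\nu$) are correct and consistent with the paper's approach.
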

\begin{proof}
  Perform a matrix iteration $\Por_{\kappa,\lambda\kappa\nu}=
  \langle\langle\Por_{\alpha,\xi},\Qnm_{\alpha,\xi}\rangle_{\xi<\lambda\kappa\nu}\rangle_{\alpha\leq\kappa}$ according to the following cases
  for $\rho<\kappa\nu$.
  \begin{enumerate}[(i)]
     \item If $\xi=\lambda\rho$, $\Qnm_{\alpha,\xi}$ is a $\Por_{\alpha,\xi}$-name for $\Dor$.
     \item If $\xi=\lambda\rho+1$, $\Bnm_\xi$ is a $\Por_{t(\rho),\xi}$-name for $\Bor^{V_{t(\rho),\xi}}$ and
            \[\Qnm_{\alpha,\xi}=\left\{
               \begin{array}{ll}
                   \mathds{1} & \textrm{if $\alpha\leq t(\rho)$,}\\
                   \Bnm_\xi & \textrm{if $\alpha>t(\rho)$.}
               \end{array}
               \right.\]
     \item As (iii) in Theorem \ref{RightCofNLarge1}.
  \end{enumerate}
  Use Claim \ref{claimRandom} and the argument of the previous results.
\end{proof}
\begin{theorem}\label{RightCofNLarge4}
   There exists a ccc poset that forces $\mathbf{GCH}_\lambda$, $\add(\Nwf)=\mu_1$, $\cov(\Nwf)=\non(\Mwf)=\add(\Mwf)=\non(\Nwf)=\nu$,
   $\dfrak=\kappa$ and $\cof(\Nwf)=\cfrak=\lambda$.
\end{theorem}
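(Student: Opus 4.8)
The plan is to run the proof of Theorem \ref{RightCofNLarge1} essentially verbatim, the single change being that the random \emph{subalgebra} columns are promoted to \emph{full} random columns. Over the model $V$ of this subsection (so $\cf(\lambda)\ge\mu_1$ and the conclusions of Theorem \ref{LeftCichonCovMLarge} with $\mu_2=\mu_3=\mu_1$ hold) I would perform the matrix iteration $\Por_{\kappa,\lambda\kappa\nu}=\langle\langle\Por_{\alpha,\xi},\Qnm_{\alpha,\xi}\rangle_{\xi<\lambda\kappa\nu}\rangle_{\alpha\le\kappa}$ of Context \ref{ContextMatrix} so that, for $\rho<\kappa\nu$: (i) at $\xi=\lambda\rho$, $\Qnm_{\alpha,\xi}$ is a $\Por_{\alpha,\xi}$-name for $\Bor^{V_{\alpha,\xi}}$ (case (2)(i) of Context \ref{ContextMatrix} with $\Sor_\xi=\Bor$); (ii) at $\xi=\lambda\rho+1$, $\Dnm_\xi$ is a $\Por_{t(\rho),\xi}$-name for $\Dor^{V_{t(\rho),\xi}}$ and $\Qnm_{\alpha,\xi}=\mathds{1}$ for $\alpha\le t(\rho)$, $\Qnm_{\alpha,\xi}=\Dnm_\xi$ for $\alpha>t(\rho)$; (iii) at $\xi=\lambda\rho+2+\epsilon$ ($\epsilon<\lambda$), the frozen subalgebras of $\Aor$ of size $<\mu_1$ organized through $g$ exactly as in part (iii) of the proof of Theorem \ref{RightCofNLarge1}.

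The small-side data is then obtained word for word as in Theorem \ref{RightCofNLarge1}. By Lemmas \ref{smallPlus}, \ref{centeredAddNull}, \ref{spamPlus} (using that $\Bor$ carries a strictly positive finitely additive measure, so satisfies $(+_{\Bor,\subseteq^*})$) and Theorem \ref{preservPlus}, the whole iteration satisfies $(+^{\mu_1}_{\cdot,\subseteq^*})$; hence $(\blacktriangle,\subseteq^*,A,\mu_1)$ is preserved and yields $\add(\Nwf)\le\mu_1$ by Lemma \ref{TriangleImpl}, while the amoeba bookkeeping gives $\add(\Nwf)\ge\mu_1$; Lemma \ref{PreserTriangle} together with ``$\cof(\Nwf)=\lambda$'' in $V$ gives $\cof(\Nwf)\ge\lambda$, and $\cfrak=\lambda$, $\mathbf{GCH}_\lambda$ hold since $|\Por_{\kappa,\lambda\kappa\nu}|=\lambda$. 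The frozen Hechler reals $d_\rho$ of the columns (ii) give, exactly as in Claim \ref{claimDom} (using $\cf(\lambda\kappa\nu)=\nu$ and the function $t$), that every family of reals of $V_{\kappa,\lambda\kappa\nu}$ of size $<\nu$ is dominated by some $d_\rho$; together with the $\nu$-cofinally many Cohen reals of the iteration this gives $\bfrak=\non(\Mwf)=\nu$, whence (once $\cov(\Mwf)=\nu$ is known) $\add(\Mwf)=\nu$ and $\cof(\Mwf)=\kappa$.

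The genuinely new point is the behaviour of the full random columns toward the Cohen reals $\dot c_\alpha$ of the vertical iteration. Since $\Bor$ is $\omega^\omega$-bounding, $(+_{\Bor,<^*})$ holds in every $V_{\alpha,\xi}$, so by Theorems \ref{suslinStar}, \ref{PreservStar} and \ref{PresvUnbdg} (with Lemma \ref{fixedStar} for the frozen columns) each $\dot c_\alpha$ stays $<^*$-unbounded over $V_{\alpha,\lambda\kappa\nu}$, whence $\dfrak=\kappa$ by Corollary \ref{MainMatrixConsq}. Conversely, the random real $r$ of the full column at $\xi$ and row $\alpha+1$ is random over $V_{\alpha+1,\xi}\ni\dot c_\alpha$ and, because $\Bor^{V_{\alpha,\xi}}\preceq_{V_{\alpha,\xi}}\Bor^{V_{\alpha+1,\xi}}$ does not move the generic point, already lies in $V_{\alpha,\xi+1}\subseteq V_{\alpha,\lambda\kappa\nu}$; hence $V_{\alpha,\lambda\kappa\nu}$ contains a real eventually different from $\dot c_\alpha$ and, by Borel--Cantelli with $|I_k|=2^{k+1}$, a real $\pitchfork$-dominating $\dot c_\alpha$, so $\dot c_\alpha$ is \emph{not} $\eqcirc$- or $\pitchfork$-unbounded over $V_{\alpha,\lambda\kappa\nu}$ and Corollary \ref{MainMatrixConsq} gives no bound on $\cov(\Mwf)$ or $\non(\Nwf)$ beyond $\nu$. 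Letting $\rho$ range over a $\nu$-cofinal subset of $\kappa\nu$, the top-row random reals $r^\kappa_{\lambda\rho}$ (resp.\ the eventually different reals derived from them) form a non-null set (resp.\ an $\eqcirc$-dominating family) of size $\nu$, since every real of $V_{\kappa,\lambda\kappa\nu}$ appears below some $\lambda\rho$; this gives $\non(\Nwf)\le\nu$ and $\cov(\Mwf)\le\nu$, the same reals give $\cov(\Nwf)\ge\nu$, and the reverse inequalities $\cov(\Nwf)\le\non(\Mwf)\le\nu$ and $\nu=\cov(\Mwf)\le\non(\Nwf)$ come from Cichon's diagram, so $\cov(\Nwf)=\non(\Mwf)=\cov(\Mwf)=\non(\Nwf)=\nu$.

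The step I expect to be the main obstacle is exactly this last verification: that $(\star,\Qnm_{\alpha,\xi},\Qnm_{\alpha+1,\xi},V_{\alpha,\xi},V_{\alpha+1,\xi},<^*,\dot c_\alpha)$ holds — which is Theorem \ref{suslinStar} applied to $(+_{\Bor,<^*})$ — while the analogues for $\eqcirc$ and $\pitchfork$ fail; the negative half rests on the ``same random real at both rows'' observation, so that the relativized random forcing at row $\alpha+1$, which does see $\dot c_\alpha$, genuinely places in $V_{\alpha,\lambda\kappa\nu}$ a witness against the $\eqcirc$- and $\pitchfork$-unboundedness of $\dot c_\alpha$. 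Everything else (the $\subseteq^*$ bookkeeping, $\mathbf{GCH}_\lambda$, and $\bfrak=\nu$ via Claim \ref{claimDom}) parallels Theorem \ref{RightCofNLarge1} and would be kept brief.
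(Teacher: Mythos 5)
Your construction is exactly the paper's (full random columns at $\xi=\lambda\rho$, frozen Hechler columns at $\xi=\lambda\rho+1$, and the amoeba bookkeeping of (iii) of Theorem \ref{RightCofNLarge1}), and your verification -- $(+^{\mu_1}_{\cdot,\subseteq^*})$ for the small side, Claim \ref{claimDom} for $\bfrak\geq\nu$ and $\dfrak\leq\kappa$, $(+_{\Bor,<^*})$ with Theorems \ref{suslinStar}, \ref{PreservStar}, \ref{PresvUnbdg} and Corollary \ref{MainMatrixConsq} for $\dfrak\geq\kappa$, and the top-row random reals for $\cov(\Nwf)\geq\nu$, $\non(\Nwf)\leq\nu$ -- is precisely the argument the paper compresses into ``use Claim \ref{claimDom} and the argument of the previous results.'' The only points you leave implicit, $\cov(\Mwf)\geq\nu$ and $\non(\Nwf)\geq\nu$ via the $\nu$-cofinally many Cohen reals, are left equally implicit in the paper, and your extra remark on why $\dot c_\alpha$ does not stay $\eqcirc$- or $\pitchfork$-unbounded is correct but not needed.
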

\begin{proof}
  Perform a matrix iteration $\Por_{\kappa,\lambda\kappa\nu}=
  \langle\langle\Por_{\alpha,\xi},\Qnm_{\alpha,\xi}\rangle_{\xi<\lambda\kappa\nu}\rangle_{\alpha\leq\kappa}$ according to the following cases
  for $\rho<\kappa\nu$.
  \begin{enumerate}[(i)]
     \item If $\xi=\lambda\rho$, $\Qnm_{\alpha,\xi}$ is a $\Por_{\alpha,\xi}$-name for $\Bor$.
     \item If $\xi=\lambda\rho+1$, $\Dnm_\xi$ is a $\Por_{t(\rho),\xi}$-name for $\Dor^{V_{t(\rho),\xi}}$ and
            \[\Qnm_{\alpha,\xi}=\left\{
               \begin{array}{ll}
                   \mathds{1} & \textrm{if $\alpha\leq t(\rho)$,}\\
                   \Dnm_\xi & \textrm{if $\alpha>t(\rho)$.}
               \end{array}
               \right.\]
     \item As (iii) in Theorem \ref{RightCofNLarge1}.
  \end{enumerate}
  Use Claim \ref{claimDom}.
\end{proof}

\subsection{Models with $\non(\Nwf)$ large}\label{SubsecNonNLarge}

Assume in $V$ that $\cf(\lambda)\geq\mu_2$ and that the conclusions of Theorem \ref{LeftCichonCovMLarge} hold with $\mu_3=\mu_2$.
\begin{theorem}\label{RightNonNLarge1}
    There exists a ccc poset that forces $\mathbf{GCH}_\lambda$, $\add(\Nwf)=\mu_1$, $\cov(\Nwf)=\mu_2$, $\bfrak=\non(\Mwf)=\nu$,
    $\cov(\Mwf)=\dfrak=\kappa$ and $\non(\Nwf)=\cfrak=\lambda$.
\end{theorem}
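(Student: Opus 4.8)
The plan is to build a matrix iteration $\Por_{\kappa,\lambda\kappa\nu}$ exactly in the style of Theorems \ref{RightCofNLarge1}--\ref{RightCofNLarge4}, combining the columns that produce the various bounds. Since we want $\bfrak=\non(\Mwf)=\nu$ preserved small while $\cov(\Mwf)=\dfrak=\kappa$ and $\non(\Nwf)=\cfrak=\lambda$, I would partition the indices $\xi<\lambda\kappa\nu$ as $\xi=\lambda\rho+j$ for $\rho<\kappa\nu$ and decide $\Qnm_{\alpha,\xi}$ by cases on $j$: a full column of $\Eor$ (the eventually different reals forcing) at $j=0$ to handle $\cov(\Mwf)$ via Lemma \ref{EvDiffPlus} and Corollary \ref{MainMatrixConsq} with $\sqsubset={}\eqcirc$; a ``below $t(\rho)$'' column giving a subalgebra of $\Bor$ at $j=1$ to get the random reals $b_\rho\in V_{t(\rho)+1,\lambda\rho+1}$; a ``below $t(\rho)$'' column giving a subalgebra of $\Dor$ at $j=2$ for the dominating reals $d_\rho$; and then, as in part (iii) of Theorem \ref{RightCofNLarge1}, a block $j=3+\epsilon$ ($\epsilon<\lambda$) running through \emph{all} subalgebras of $\Aor$ of size $<\mu_1$ (indexed using the bijection $g$ and the projection $(\cdot)_0$), together with analogous $\lambda$-blocks running through all subalgebras of $\Bor$ of size $<\mu_2$, so as to keep $\add(\Nwf)=\mu_1$ and $\cov(\Nwf)=\mu_2$ pinned down.

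\textbf{Verifying the bounds.} Once the iteration is fixed, the argument is a routine assembly of the lemmas in the paper. First, $\mathbf{GCH}_\lambda$ and $\cfrak\leq\lambda$ are immediate from the length of the iteration and the ground-model hypothesis. For the lower bounds on the small side, observe that the iteration is built from $\sigma$-centered posets ($\Eor$, $\Dor$) and subalgebras of $\Aor$ and $\Bor$; by Lemmas \ref{smallPlus}, \ref{centeredFork}, \ref{centeredAddNull}, \ref{spamPlus} and Theorem \ref{preservPlus} it satisfies $(+^{\mu_1}_{\cdot,\subseteq^*})$ and $(+^{\mu_2}_{\cdot,\pitchfork})$, so by Lemma \ref{PreserTriangle} the properties $(\blacktriangle,\subseteq^*,A,\mu_1)$ and $(\blacktriangle,\pitchfork,B,\mu_2)$ of the ground model $V$ (from Theorem \ref{LeftCichonCovMLarge} with $\mu_3=\mu_2$) are preserved; this gives $\add(\Nwf)\le\mu_1$ and $\cov(\Nwf)\le\mu_2$, and the $\Aor$- and $\Bor$-subalgebra blocks give the matching $\ge$ by the book-keeping argument of Section \ref{SecModelLeft} together with Theorem \ref{realsmatrixbelow}. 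For $\bfrak\ge\nu$ and $\cov(\Nwf)\ge\nu$ I would invoke Claims \ref{claimRandom} and \ref{claimDom} verbatim: every $<\nu$-sized family of Borel null sets (resp.\ of reals) coded in $V_{\kappa,\lambda\kappa\nu}$ is captured below some stage $V_{\alpha,\lambda\delta}$ by Theorem \ref{realsmatrixbelow}, and the definition of $t$ yields $\rho\in(\delta,\kappa\nu)$ with $t(\rho)=\alpha$, so $b_\rho$ avoids the family and $d_\rho$ dominates it. The $\nu$-cofinally many Cohen reals (from column $0$ of the matrix, which is always fsi of Cohen forcing) and eventually different reals force $\non(\Mwf)\le\nu$; then $\bfrak\le\non(\Mwf)$ gives $\bfrak=\non(\Mwf)=\nu$. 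Since the hypothesis of Theorem \ref{PresvUnbdg} holds for $\sqsubset={}\eqcirc$ (all full columns are Cohen, $\Eor$, $\Dor$, $\Bor$, each satisfying $(+_{\cdot,\eqcirc})$ in the relevant sense) and for $\sqsubset={}<^*$, Corollary \ref{MainMatrixConsq} gives $\cov(\Mwf)\ge\kappa$ and $\dfrak\ge\kappa$; the ZFC inequalities then force $\cov(\Mwf)=\dfrak=\kappa$. Finally $\non(\Nwf)\ge\lambda$: for each regular $\mu_2<\theta\le\lambda$ the $\theta$-th block of $\Aor$-ish activity produces, in an intermediate extension, $\theta\le\non(\Nwf)$ which is preserved to the top by $(+^{\mu_1}_{\cdot,\subseteq^*})$ and Lemma \ref{PreserTriangle}; alternatively one reads it off $\cof(\Nwf)\ge\non(\Nwf)$ is not needed---rather $\cfrak=\lambda$ and the fact that $\dfrak,\bfrak<\lambda$ forces the null-covering subalgebra blocks to push $\non(\Nwf)$ all the way up. (More directly: the $\Bor$-subalgebra blocks of size $<\mu_2$, of which there are $\lambda$ many, together with $\cf(\lambda)\ge\mu_2$, give a set witnessing $\non(\Nwf)=\lambda$ by the same preservation reasoning used for $\cov(\Nwf)$ in Theorem \ref{RightCofNLarge1}.)

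\textbf{Main obstacle.} The genuinely delicate point is ensuring that all three ``small-side'' invariants $\add(\Nwf)=\mu_1$, $\cov(\Nwf)=\mu_2$, $\bfrak=\nu$ survive \emph{simultaneously} while the three ``large-side'' columns ($\Eor$ for $\cov(\Mwf)$, $\Dor$ for $\dfrak$, and the $\Bor$ reals for $\non(\Nwf)$) do their work---i.e.\ that the book-keeping correctly interleaves the $\lambda$-many $\Aor$-subalgebra names, the $\lambda$-many $\Bor$-subalgebra names, and the $\kappa\nu$-many rounds indexed by $\rho$, so that \emph{every} subalgebra appearing at \emph{any} stage is eventually used, and that the single preservation property $(+^{\mu_1}_{\cdot,\subseteq^*})$ (resp.\ $(+^{\mu_2}_{\cdot,\pitchfork})$) holds at every stage of the whole $\lambda\kappa\nu$-length iteration, including across the ``below $t(\rho)$'' columns where one column of the matrix is extended by a full Suslin poset. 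This is handled by Theorem \ref{PresvUnbdg} (for the $\sqsubset$-unbounded reals $\dot c_\alpha$) combined with the $\mu$-centeredness of $\Dor$, the measure-algebra structure of $\Bor$ (Lemma \ref{spamPlus}), and Theorem \ref{preservPlus}; but checking that these apply uniformly to the specific matrix---in particular that the $\Eor$ columns, which are only $\sigma$-centered and hence satisfy $(+^{\mu_1}_{\cdot,\subseteq^*})$ and $(+^{\mu_2}_{\cdot,\pitchfork})$, do not interfere---is the bookkeeping-heavy heart of the proof. Everything else is, as the paper says, ``proved in a similar fashion'' to Theorems \ref{RightCofNLarge1}--\ref{RightCofNLarge4}.
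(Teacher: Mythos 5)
Your construction adds two columns that the paper's proof of this theorem deliberately omits, and both of them destroy the arithmetic you are aiming for.

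The full $\Eor$ column at $j=0$ is a (2)(i)-type column in the sense of Context \ref{ContextMatrix}, so to apply Theorem \ref{PresvUnbdg} with $\sqsubset=\eqcirc$ you would need $(+_{\Eor,\eqcirc})$. You invoke Lemma \ref{EvDiffPlus}, but that lemma gives $(+_{\Eor,<^*})$, not $(+_{\Eor,\eqcirc})$. In fact $(+_{\Eor,\eqcirc})$ is false: the $\Eor$-generic is eventually different from every ground-model real, so after a single step of $\Eor$ the old reals are meager and no $\eqcirc$-unbounded family survives. Hence you cannot conclude that the Cohen reals $\dot c_\alpha$ stay $\eqcirc$-unbounded along the horizontal iterations, and Corollary \ref{MainMatrixConsq} does not yield $\cov(\Mwf)\geq\kappa$. (The paper's Theorem \ref{RightNonNLarge2}, which \emph{does} use a full $\Eor$ column, targets $\cov(\Mwf)=\nu$, not $\kappa$, precisely because that column pushes $\cov(\Mwf)$ down.)

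The ``below $t(\rho)$'' $\Bor$ column at $j=1$ kills $\cov(\Nwf)=\mu_2$. You state the consequence yourself: by Claim \ref{claimRandom} the random reals $b_\rho$ give $\cov(\Nwf)\geq\nu$, whereas the theorem demands $\cov(\Nwf)=\mu_2$ and $\mu_2<\nu$ is allowed --- a direct contradiction. Your parallel claim that the iteration satisfies $(+^{\mu_2}_{\cdot,\pitchfork})$ is also wrong: the full $\Bor$ is not $\sigma$-centered (so Lemma \ref{centeredFork} does not apply), Lemma \ref{spamPlus} gives only $(+_{\Bor,\subseteq^*})$, and random forcing in fact destroys $\pitchfork$-unbounded families, essentially because $(\pitchfork)^g$ is co-null.

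The paper's construction is leaner and avoids both problems. For each round $\rho<\kappa\nu$, there is a single ``below $t(\rho)$'' column at $\xi=\lambda\rho$ running the full Hechler forcing $\Dor^{V_{t(\rho),\xi}}$ (so the $d_\rho$ yield $\bfrak\geq\nu$ and $\dfrak\leq\kappa$ via Claim \ref{claimDom}), followed by a $\lambda$-block $\xi=\lambda\rho+1+2\epsilon$, $\xi=\lambda\rho+1+2\epsilon+1$ ($\epsilon<\lambda$) that interleaves \emph{all} small subalgebras of $\Aor$ of size $<\mu_1$ with \emph{all} small subalgebras of $\Bor$ of size $<\mu_2$. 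No column is of type (2)(i), so the hypothesis of Theorem \ref{PresvUnbdg} is vacuous and Corollary \ref{MainMatrixConsq} applies to every $\sqsubset$; with $\sqsubset=\eqcirc$ it gives $\cov(\Mwf)\geq\kappa$ from the Cohen reals alone, while the $\nu$-cofinality of the horizontal iteration supplies $\non(\Mwf)\leq\nu$, and $(+^{\mu_1}_{\cdot,\subseteq^*})$, $(+^{\mu_2}_{\cdot,\pitchfork})$ follow from Lemmas \ref{smallPlus}, \ref{centeredAddNull}, \ref{centeredFork}, \ref{spamPlus} and Theorem \ref{preservPlus} because everything used is either $\sigma$-centered or of size $<\mu_1$ or $<\mu_2$.
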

\begin{proof}
  Perform a matrix iteration $\Por_{\kappa,\lambda\kappa\nu}=
  \langle\langle\Por_{\alpha,\xi},\Qnm_{\alpha,\xi}\rangle_{\xi<\lambda\kappa\nu}\rangle_{\alpha\leq\kappa}$ according to the following cases
  for $\rho<\kappa\nu$.
  \begin{enumerate}[(i)]
     \item If $\xi=\lambda\rho$, $\Dnm_\xi$ is a $\Por_{t(\rho),\xi}$-name for $\Dor^{V_{t(\rho),\xi}}$ and
            \[\Qnm_{\alpha,\xi}=\left\{
               \begin{array}{ll}
                   \mathds{1} & \textrm{if $\alpha\leq t(\rho)$,}\\
                   \Dnm_\xi & \textrm{if $\alpha>t(\rho)$.}
               \end{array}
               \right.\]
  \end{enumerate}
  Fix, for each $\alpha\leq\kappa$, two sequences
  $\langle\Anm_{\alpha,\gamma}^\rho\rangle_{\gamma<\lambda}$ and $\langle\Bnm_{\alpha,\gamma}^\rho\rangle_{\gamma<\lambda}$ of
  $\Por_{\alpha,\lambda\rho+1}$-names for \emph{all}
  suborders of $\Aor^{V_{\alpha,\lambda\rho+1}}$ of size $<\mu_1$ and \emph{all} suborders of $\Bor^{V_{\alpha,\lambda\rho+1}}$ of size
  $<\mu_2$.
  \begin{enumerate}[(i)]
  \setcounter{enumi}{1}
     \item If $\xi=\lambda\rho+1+2\epsilon$ ($\epsilon<\lambda$), put
           \[\Qnm_{\alpha,\xi}=\left\{
                            \begin{array}{ll}
                               \mathds{1}, & \textrm{if $\alpha\leq(g(\epsilon))_0$,}\\
                               \Anm^\rho_{g(\epsilon)}, & \textrm{if $\alpha>(g(\epsilon))_0$.}
                            \end{array}
                     \right.\]
     \item If $\xi=\lambda\rho+1+2\epsilon+1$ ($\epsilon<\lambda$), put
           \[\Qnm_{\alpha,\xi}=\left\{
                            \begin{array}{ll}
                               \mathds{1}, & \textrm{if $\alpha\leq(g(\epsilon))_0$,}\\
                               \Bnm^\rho_{g(\epsilon)}, & \textrm{if $\alpha>(g(\epsilon))_0$.}
                            \end{array}
                     \right.\]
  \end{enumerate}
  By Lemmas \ref{smallPlus}, \ref{spamPlus}, \ref{centeredFork} and \ref{spamPlus} and Theorem \ref{preservPlus}, the matrix iteration satisfies
  $(+_{\cdot,\subseteq^*}^{\mu_1})$ and $(+_{\cdot,\pitchfork}^{\mu_2})$.
  Work in an extension $V_{\kappa,\lambda\kappa\nu}$ of the matrix iteration. From (ii) and (iii), using the same argument
  as in the proofs in subsection \ref{SubsecCofNLarge}, $\add(\Nwf)=\mu_1$, $\cov(\Nwf)=\mu_2$ and $\non(\Nwf)=\cfrak=\lambda$. Because
  of the $\nu$-cofinally many Cohen reals, $\non(\Mwf)\leq\nu$ and, by Corollary \ref{MainMatrixConsq} with $\sqsubset=\eqcirc$,
  $\cov(\Mwf)\geq\kappa$. Note that Claim \ref{claimDom} holds in this case (with some variations in the subindex of the dominating reals),
  so $\bfrak\geq\nu$ and $\dfrak\leq\kappa$.
\end{proof}
\begin{theorem}\label{RightNonNLarge2}
    There exists a ccc poset that forces $\mathbf{GCH}_\lambda$, $\add(\Nwf)=\mu_1$, $\cov(\Nwf)=\mu_2$, $\bfrak=\non(\Mwf)=\cov(\Mwf)=\nu$,
    $\dfrak=\kappa$ and $\non(\Nwf)=\cfrak=\lambda$.
\end{theorem}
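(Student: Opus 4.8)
The plan is to re-run the matrix iteration used for Theorem \ref{RightNonNLarge1}, only inserting at cofinally many columns a full copy of the eventually different reals forcing $\Eor$; this is the same modification that carries Theorem \ref{RightContLarge1} into Theorem \ref{RightContLarge2} and Theorem \ref{RightCofNLarge1} into Theorem \ref{RightCofNLarge2}. Concretely, I would perform a matrix iteration $\Por_{\kappa,\lambda\kappa\nu}$ so that on each block $[\lambda\rho,\lambda(\rho+1))$, $\rho<\kappa\nu$: the column $\xi=\lambda\rho$ is the full iterand $\Qnm_{\alpha,\xi}=\Eor^{V_{\alpha,\xi}}$; the column $\xi=\lambda\rho+1$ is a Hechler name $\Dnm_\xi$ over $V_{t(\rho),\xi}$ extended above row $t(\rho)$ (as in case (i) of Theorem \ref{RightNonNLarge1}); and the remaining $\lambda$ coordinates $\xi=\lambda\rho+2+2\epsilon$, $\xi=\lambda\rho+2+2\epsilon+1$ ($\epsilon<\lambda$) carry the names $\Anm^\rho_{g(\epsilon)}$ and $\Bnm^\rho_{g(\epsilon)}$ running over all subalgebras of $\Aor^{V_{\alpha,\lambda\rho+2}}$ of size $<\mu_1$, resp.\ of $\Bor^{V_{\alpha,\lambda\rho+2}}$ of size $<\mu_2$, placed above row $(g(\epsilon))_0$, exactly as in Theorem \ref{RightNonNLarge1}.

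Since $\Eor$ and $\Dor$ are $\sigma$-centered and the other iterands are small or subalgebras of $\Bor$, Lemmas \ref{smallPlus}, \ref{centeredAddNull}, \ref{centeredFork}, \ref{spamPlus} and Theorem \ref{preservPlus} give $(+^{\mu_1}_{\cdot,\subseteq^*})$ and $(+^{\mu_2}_{\cdot,\pitchfork})$ for the iteration; hence $(\blacktriangle,\subseteq^*,A,\mu_1)$, $(\blacktriangle,\pitchfork,B,\mu_2)$ and $\dfrak_\pitchfork\geq\lambda$ are preserved, and together with the $\Aor$- and $\Bor$-subalgebras and Lemma \ref{InvforPitchfork} this yields $\add(\Nwf)=\mu_1$, $\cov(\Nwf)=\mu_2$, $\non(\Nwf)=\cfrak=\lambda$ and $\mathbf{GCH}_\lambda$, precisely as in Theorem \ref{RightNonNLarge1}. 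The crux of the change is that the only full columns are now copies of $\Eor$: by Lemmas \ref{EvDiffPlus} and \ref{centeredFork} the hypothesis of Theorem \ref{PresvUnbdg} holds for $\sqsubset$ equal to $<^*$ and to $\pitchfork$, so Corollary \ref{MainMatrixConsq} still delivers $\kappa\leq\dfrak$ and $\kappa\leq\dfrak_\pitchfork\leq\non(\Nwf)$; but $(+_{\Eor,\eqcirc})$ \emph{fails}, since $\Eor$ adds an eventually different real, so Corollary \ref{MainMatrixConsq} is unavailable for $\sqsubset=\eqcirc$ and the argument for $\cov(\Mwf)\geq\kappa$ from Theorem \ref{RightNonNLarge1} no longer goes through.

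Finally I would pin down $\bfrak=\non(\Mwf)=\cov(\Mwf)=\nu$ and $\dfrak=\kappa$. From the Hechler reals $d_\rho$ one gets, via Theorem \ref{realsmatrixbelow} and the analogue of Claim \ref{claimDom}, that every set of $<\nu$ reals is dominated by some $d_\rho$ (so $\bfrak\geq\nu$) and that $\{d_\rho\}_{\rho<\kappa\nu}$ is dominating (so $\dfrak\leq\kappa$); with $\dfrak\geq\kappa$ from above and $\bfrak\leq\non(\Mwf)\leq\nu$ (the latter from the $\nu$-cofinally many Cohen reals, as in Theorem \ref{RightNonNLarge1}), we obtain $\dfrak=\kappa$ and $\bfrak=\non(\Mwf)=\nu$. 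For $\cov(\Mwf)\leq\nu$: the eventually different real of the $\Eor$-column at $\xi=\lambda\rho$ is, at the top row, eventually different over $V_{\kappa,\lambda\rho}$, and since these columns are cofinal in $\lambda\kappa\nu$ (whose cofinality is $\nu$), picking $\nu$ of them produces, by Theorem \ref{realsmatrixbelow}, an $\eqcirc$-dominating family of size $\nu$. For $\cov(\Mwf)\geq\nu$: given $<\nu$ reals, located in some $V_{\alpha,\xi}$, choose $\rho$ with $t(\rho)=\alpha$ and $\lambda\rho+1>\xi$; then $d_\rho$ is genuinely Hechler over $V_{\alpha,\lambda\rho+1}\supseteq V_{\alpha,\xi}$, so the Cohen real it adds over that model is infinitely often equal to all of them, and they are not $\eqcirc$-dominating. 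I expect the only delicate point to be exactly this computation of $\cov(\Mwf)$: checking that inserting the full $\Eor$-columns genuinely destroys the $\kappa$-lower bound for $\cov(\Mwf)$ while disturbing none of the $(+^{\mu_1}_{\cdot,\subseteq^*})$, $(+^{\mu_2}_{\cdot,\pitchfork})$ and $(+_{\cdot,<^*})$ preservation underlying $\add(\Nwf)$, $\cov(\Nwf)$, $\non(\Nwf)$, $\dfrak$ and $\cfrak$.
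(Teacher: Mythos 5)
Your construction is exactly the paper's: full $\Eor$-columns at $\xi=\lambda\rho$, the restricted Hechler name at $\lambda\rho+1$ above row $t(\rho)$, and the $\Aor$-/$\Bor$-subalgebra book-keeping at the remaining coordinates of each block, with the values then computed via $(+^{\mu_1}_{\cdot,\subseteq^*})$, $(+^{\mu_2}_{\cdot,\pitchfork})$, Corollary \ref{MainMatrixConsq} for $<^*$ and $\pitchfork$, and the Hechler/Cohen/eventually-different reals as in Theorems \ref{RightNonNLarge1} and \ref{RightCofNLarge2}. Your verification (including the explicit $\cov(\Mwf)=\nu$ computation, which the paper leaves implicit) is correct, so this is essentially the same proof.
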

\begin{proof}
  Perform a matrix iteration $\Por_{\kappa,\lambda\kappa\nu}=
  \langle\langle\Por_{\alpha,\xi},\Qnm_{\alpha,\xi}\rangle_{\xi<\lambda\kappa\nu}\rangle_{\alpha\leq\kappa}$ according to the following cases
  for $\rho<\kappa\nu$.
  \begin{enumerate}[(i)]
     \item If $\xi=\lambda\rho$, let $\Qnm_{\alpha,\xi}$ be a $\Por_{\alpha,\xi}$-name for $\Eor$.
     \item If $\xi=\lambda\rho+1$, $\Dnm_\xi$ is a $\Por_{t(\rho),\xi}$-name for $\Dor^{V_{t(\rho),\xi}}$ and
            \[\Qnm_{\alpha,\xi}=\left\{
               \begin{array}{ll}
                   \mathds{1} & \textrm{if $\alpha\leq t(\rho)$,}\\
                   \Dnm_\xi & \textrm{if $\alpha>t(\rho)$.}
               \end{array}
               \right.\]
     \item If $\xi=\lambda\rho+2+2\epsilon$ ($\epsilon<\lambda$), proceed like in (ii) of the proof of Theorem \ref{RightNonNLarge1}.
     \item If $\xi=\lambda\rho+2+2\epsilon+1$ ($\epsilon<\lambda$), proceed like in (iii) of the proof of Theorem \ref{RightNonNLarge1}.
  \end{enumerate}
\end{proof}

\section{Questions}\label{SecQ}

\begin{enumerate}[(1)]
   \item Is there a method that allows us to get a model of a case where more than 3 invariants in the right hand side of Cichon's diagram
         can take arbitrary different values? In particular, can we get a model of $\cov(\Mwf)<\dfrak<\non(\Nwf)<\cof(\Nwf)$?
   \item Can we extend our results to singular cardinals? Specifically, under which conditions can the following cardinals be singular.
         \begin{enumerate}[(a)]
            \item $\mu_2$ in Theorems \ref{LeftCichonCovMLarge}, \ref{LeftCichonDomNonNLarge}, \ref{LeftCichonNonNLarge}, \ref{RightNonNLarge1}
                  and \ref{RightNonNLarge2}.
            \item $\kappa$ in Theorems \ref{LeftCichonDomNonNLarge}, \ref{LeftCichonDomLarge} and in the results of section \ref{SecModelRight}.
         \end{enumerate}
\end{enumerate}

\begin{acknowledgements}
  I would like to thank my PhD advisor, professor J\"org Brendle, for all the constructive discussions about all the material of this paper,
  for pointing out many details that helped me to improve the concepts and results of the preliminary versions of this work and for helping me
  with the corrections of this material. Ideas in section \ref{SecModelLeft} are improvements that professor Brendle made over his own work in
  \cite{brendle} and that he kindly taught me and let me use in this paper.
\end{acknowledgements}

\bibliographystyle{spmpsci}      


\end{document}